\newcommand{\XYMATRIX}{\xymatrix@M=6pt}
\numberwithin{equation}{section}
\newtheoremstyle{slplain}
  {\topsep}
  {\topsep}
  {\slshape}
  {0pt}
  {\bfseries}
  {.}
  {0.5em}
  {}
\theoremstyle{slplain}
  \newtheorem{THM}{Theorem}[section]
  \newtheorem{LEM}[THM]{Lemma}
  \newtheorem{PROP}[THM]{Proposition}
  \newtheorem{COR}[THM]{Corollary}
\theoremstyle{definition}
  \newtheorem{DEF}[THM]{Definition}
  \newtheorem{EX}{Example}[section]
\newcommand{\precsal}{\mathrel{\prec_{\mathit{sal}}}}
\newcommand{\precsaleq}{\mathrel{\preccurlyeq_{\mathit{sal}}}}
\newcommand{\alex}{\mathrel{<_{\mathit{alex}}}}
\newcommand{\sqalex}{\mathrel{\sqsubset_{\mathit{alex}}}}
\newcommand{\sqprimesal}{\mathrel{\sqsubset'_{\mathit{sal}}}}
\newcommand{\sqsal}{\mathrel{\sqsubset_{\mathit{sal}}}}
\newcommand{\sqsalidx}{\sqsubset_{\mathit{sal}}}
\newcommand{\lsal}{\mathrel{<_{\mathit{sal}}}}
\renewcommand{\preceq}{\preccurlyeq}
\renewcommand{\le}{\leqslant}
\renewcommand{\ge}{\geqslant}
\newcommand{\0}{\varnothing}
\newcommand{\quotient}[2]{\genfrac{[}{]}{0pt}{}{#1}{#2}}
\renewcommand{\sec}{\cap}
\renewcommand{\phi}{\varphi}
\renewcommand{\epsilon}{\varepsilon}
\newcommand{\CC}{\mathbf{C}}
\newcommand{\DD}{\mathbf{D}}
\newcommand{\KK}{\mathbf{K}}
\newcommand{\RR}{\mathbb{R}}
\newcommand{\TT}{\mathbf{T}}
\newcommand{\union}{\cup}
\newcommand{\restr}[2]{\hbox{$#1$}\hbox{$\upharpoonright$}_{#2}}
\newcommand{\reduct}[2]{\hbox{$#1$}\hbox{$|$}_{#2}}
\newcommand{\Boxed}[1]{\mbox{$#1$}}
\newcommand{\id}{\mathrm{id}}
\newcommand{\Ob}{\mathrm{Ob}}
\newcommand{\arity}{\mathrm{ar}}
\newcommand{\op}{\mathrm{op}}
\newcommand{\calA}{\mathcal{A}}
\newcommand{\calB}{\mathcal{B}}
\newcommand{\calC}{\mathcal{C}}
\newcommand{\calD}{\mathcal{D}}
\newcommand{\calE}{\mathcal{E}}
\newcommand{\calF}{\mathcal{F}}
\newcommand{\calG}{\mathcal{G}}
\newcommand{\calL}{\mathcal{L}}
\newcommand{\calM}{\mathcal{M}}
\newcommand{\calP}{\mathcal{P}}
\newcommand{\calS}{\mathcal{S}}
\newcommand{\calT}{\mathcal{T}}
\newcommand{\calX}{\mathcal{X}}
\newcommand{\calY}{\mathcal{Y}}
\newcommand{\REL}{\mathbf{Rel}}
\newcommand{\RELSrqm}{\mathbf{Rel}_{\mathit{srq}}}
\newcommand{\RslSrqm}{\mathbf{ERst}_{\mathit{srq}}}
\newcommand{\CH}{\mathbf{Ch}_{\mathit{emb}}}
\newcommand{\CHrs}{\mathbf{Ch}_{\mathit{rs}}}
\newcommand{\MetNers}{\mathbf{OMet}_{\mathit{ners}}}
\newcommand{\RBinRsh}{\mathbf{OBin}_{\mathit{rsh}}}
\newcommand{\GraSrqm}{\mathbf{OGra}_{\mathit{srq}}}
\newcommand{\RdlSrqm}{\mathbf{EDig}_{\mathit{srq}}}
\newcommand{\EPosSrqm}{\mathbf{EPos}_{\mathit{srq}}}
\newcommand{\ERhgSrqm}{\mathbf{OHgr}_{\mathit{srq}}}
\DeclareMathOperator{\tp}{tp}
\DeclareMathOperator{\mat}{mat}
\DeclareMathOperator{\tup}{tup}
\DeclareMathOperator{\Spec}{Spec}
\DeclareMathOperator{\Aut}{Aut}
\title{Dual Ramsey theorems for relational structures}
\author{%
  Dragan Ma\v sulovi\'c\\
  University of Novi Sad, Faculty of Sciences\\
  Department of Mathematics and Informatics\\
  Trg Dositeja Obradovi\'ca 3, 21000 Novi Sad, Serbia\\
  e-mail: dragan.masulovic@dmi.uns.ac.rs}
\begin{document}
\maketitle

\begin{abstract}
  In this paper we provide explicit dual Ramsey statements for several classes of finite relational structures
  (such as finite linearly ordered graphs, finite linearly ordered metric spaces and finite posets
  with a linear extension) and conclude the paper with an explicit dual of the Ne\v set\v ril-R\"odl Theorem
  for relational structures.
  Instead of embeddings which are crucial for ``direct'' Ramsey results, for each class of structures under
  consideration we propose a special class of surjective maps and
  prove a dual Ramsey theorem in such a setting. In contrast to on-going Ramsey classification projects
  where the research is focused on fine-tuning the objects, in this paper
  we advocate the idea that fine-tuning the morphisms is the key to proving dual Ramsey results.
  Since the setting we are interested in involves both structures and morphisms, all our results are spelled out using
  the reinterpretation of the (dual) Ramsey property in the language of category theory.

  \bigskip

  \noindent \textbf{Key Words:} dual Ramsey property, finite relational structures, category theory

  \noindent \textbf{AMS Subj.\ Classification (2010):} 05C55, 18A99
\end{abstract}

\section{Introduction}

Generalizing the classical results of F.~P.~Ramsey from the late 1920's, the structural Ramsey theory originated at
the beginning of 1970’s in a series of papers (see \cite{N1995} for references).
We say that a class $\KK$ of finite structures has the \emph{Ramsey property} if the following holds:
for any number $k \ge 2$ of colors and all $\calA, \calB \in \KK$ such that $\calA$ embeds into $\calB$
there is a $\calC \in \KK$
such that no matter how we color the copies of $\calA$ in $\calC$ with $k$ colors, there is a \emph{monochromatic} copy
$\calB'$ of $\calB$ in $\calC$ (that is, all the copies of $\calA$ that fall within $\calB'$ are colored by the same color).
In this parlance the Finite Ramsey Theorem takes the following form:

\begin{THM} (Finite Ramsey Theorem~\cite{Ramsey})\label{dthms.thm.frt}
  The class of all finite chains has the Ramsey property.
\end{THM}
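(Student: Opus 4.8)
The plan is to unwind the definition of the Ramsey property for chains into a partition relation on finite sets and then prove that relation by induction on the size of the smaller structure. First I would observe that for finite chains the combinatorics simplifies dramatically: since a finite linear order on a given set is unique up to the choice of the underlying set, an embedding of a chain $\calA$ into a chain $\calC$ is completely determined by its image, an $|\calA|$-element subset of $\calC$. Thus the copies of $\calA$ in $\calC$ are in bijection with the subsets $\binom{C}{a}$, where $a = |\calA|$ and $C$ is the carrier of $\calC$, and a $k$-colouring of these copies is nothing but a $k$-colouring of $\binom{C}{a}$. In this language the Ramsey property for the class of finite chains becomes the statement that for all integers $a \le b$ and $k \ge 2$ there is an integer $c$ such that every $k$-colouring of $\binom{[c]}{a}$ admits a $b$-element set $B \subseteq [c]$ with $\binom{B}{a}$ monochromatic; the witnessing chain $\calC$ is then the linear order on $[c]$ and $\calB'$ is the induced order on $B$.

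I would prove this partition relation by induction on the arity $a$. The base case $a = 1$ is the pigeonhole principle: colouring the $c$ singletons with $k$ colours, any $c \ge k(b-1)+1$ forces some colour class of size at least $b$. For the inductive step, assume the statement for $a-1$ (with all target sizes and all numbers of colours), fix $b$ and $k$, and work inside a ground set $[N]$ with $N$ large but finite, to be fixed at the end. The idea is to select an increasing sequence of points $x_1 < x_2 < \cdots < x_m$ together with a decreasing chain of reservoirs $[N] = S_0 \supseteq S_1 \supseteq \cdots$, where at stage $i$ one picks $x_{i+1} = \min S_i$ and then, by iterated pigeonhole, passes to a subset $S_{i+1} \subseteq S_i \setminus \{x_{i+1}\}$ on which the colour of every $a$-set $\{x_{i_1}, \dots, x_{i_{a-1}}, y\}$ (with $i_1 < \cdots < i_{a-1} \le i+1$ and $y \in S_{i+1}$) no longer depends on $y$. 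This makes it legitimate to define an induced $k$-colouring of the $(a-1)$-subsets of $\{x_1, \dots, x_m\}$ by assigning to each such subset the common colour it acquires together with any later point.

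Applying the induction hypothesis for $a-1$, the $k$ colours, and target size $b$ to this induced colouring, once $m$ is at least the corresponding $(a-1)$-dimensional Ramsey number I obtain a $b$-element set $B \subseteq \{x_1, \dots, x_m\}$ all of whose $(a-1)$-subsets receive the same induced colour; by construction every $a$-subset of $B$ then has that same original colour, which is exactly the monochromatic copy of $\calB$ we seek. The only genuinely delicate point is the bookkeeping in the reservoir construction: one must verify that finitely many pigeonhole steps suffice and that $S_m$ stays nonempty, which forces an explicit (iterated-exponential) lower bound on $N$ in terms of $a$, $b$, and $k$. A clean alternative that sidesteps the quantitative estimate is to prove the infinite Ramsey theorem first by the same reservoir argument carried out to infinity, and then deduce the finite version by a compactness argument (K\"onig's lemma on the tree of partial colourings); I would mention this route but carry out the direct finite induction, since it yields the concrete witness $\calC$ demanded by the Ramsey property.
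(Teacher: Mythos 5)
The paper does not prove this statement at all: it is Ramsey's classical 1930 theorem, quoted with a citation and used as a black box (e.g.\ in Example~\ref{cerp.ex.FRT-ch}), so there is no in-paper argument to compare against. Your proof is correct and is the standard one: the reduction from chains to the partition relation on $\binom{[c]}{a}$ is legitimate because an embedding between finite chains is determined by its image, and the induction on $a$ via the pre-homogeneous ``reservoir'' construction (stabilizing, by iterated pigeonhole, the colour of each $(a-1)$-set extended by a later point, then applying the inductive hypothesis to the induced colouring) is the classical Skolem/Erd\H{o}s--Szekeres argument. The two points you flag as delicate --- keeping $S_m$ nonempty by choosing $N$ iterated-exponentially large, and the fact that the homogeneity of $B$ for the induced colouring transfers to all $a$-subsets of $B$ because the last element of any such subset lies in the reservoir current when its $(a-1)$-element initial segment was completed --- are exactly the right points, and both go through. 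This would be a perfectly acceptable self-contained proof of Theorem~\ref{dthms.thm.frt}, though in the context of this paper one would simply cite~\cite{Ramsey} as the author does.
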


Many natural classes of structures (such as finite graphs, metric spaces and posets, just to
name a few) do not have the Ramsey property. It is quite common,
though, that after expanding the structures under consideration
with appropriately chosen linear orders, the resulting class of expanded
structures has the Ramsey property. For example, the class of all finite
linearly ordered graphs $(V, E, \Boxed\sqsubset)$ where $(V, E)$ is a finite graph and $\sqsubset$ is
a linear order on the set $V$ of vertices of the graph has the Ramsey property~\cite{AH,Nesetril-Rodl}.
The same is true for metric spaces~\cite{Nesetril-metric}.
In case of finite posets we consider the class of all
finite linearly ordered posets $(P, \Boxed\preceq, \Boxed\sqsubset)$ where $(P, \Boxed\preceq)$ is
a finite poset and $\sqsubset$ is a linear order on $P$ which extends~$\preceq$~\cite{Nesetril-Rodl}.
One of the cornerstones of the structural Ramsey theory is the Ne\v set\v ril-R\"odl Theorem:

\begin{THM} (Ne\v set\v ril-R\"odl Theorem~\cite{AH}, \cite{Nesetril-Rodl,Nesetril-Rodl-1983})\label{dthms.thm.NRT}
  The class of all finite linearly ordered relational structures all having the same, fixed, relational type
  has the Ramsey property.\footnote{%
    Note that this is a restricted version of the
    Ne\v set\v ril-R\"odl Theorem which does not account for subclasses defined by forbidden substructures.%
  }
\end{THM}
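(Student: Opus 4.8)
The plan is to carry out the Ne\v set\v ril--R\"odl \emph{partite construction}, whose two engines are a local colouring lemma (the \emph{partite lemma}, proved via the Hales--Jewett theorem) and a global iterated amalgamation steered by the Finite Ramsey Theorem (Theorem~\ref{dthms.thm.frt}). Fix the relational type, a number $k \ge 2$ of colours and linearly ordered structures $\calA \hookrightarrow \calB$ of that type, with $a = |A|$. A routine product argument reduces arbitrary $k$ to $k = 2$, so I would concentrate on the mechanism. The decisive feature of the class is the linear order, which supplies \emph{rigidity}: ordered structures have no nontrivial automorphisms, each copy of $\calA$ inside a copy of $\calB$ occupies a unique position, and every amalgam built below carries a canonical linear order (order the parts first, then order within each part). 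This rigidity is exactly what keeps both engines under control.

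First I would fix a base. By the Finite Ramsey Theorem applied to the underlying chains, choose a finite ordered set $\calN$ with $\calN \longrightarrow (\calB)^{\calA}_k$ at the level of bare ordered sets; since the order reducts of $\calA$ and $\calB$ are the chains on $a$ and $b = |B|$ points, this is exactly the ordinary finite Ramsey statement for $a$-element subsets of a sufficiently long chain. Next, for a structure $\calM$ call a structure $\calM$-\emph{partite} if its vertices are partitioned into parts indexed by the vertices of $\calM$ and all its relations hold across pairwise distinct parts; a copy of $\calA$ or $\calB$ is \emph{transversal} if it uses at most one vertex per part, so that it projects onto a copy in $\calM$. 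The partite lemma is the case $\calM = \calA$ (hence $a$ parts): for every finite $\calA$-partite structure $\calQ$ there is a finite $\calA$-partite structure $\calR$ such that every $k$-colouring of the transversal copies of $\calA$ in $\calR$ admits a transversal copy of $\calQ$ all of whose transversal $\calA$-copies receive a single colour. Its proof encodes the parts of $\calR$ as large Cartesian powers of the parts of $\calQ$ with coordinatewise relations, so that transversal $\calA$-copies of $\calR$ correspond to points of a combinatorial cube and transversal copies of $\calQ$ to combinatorial lines; a Hales--Jewett monochromatic line is then a monochromatic copy of $\calQ$, the order making this correspondence a structure-preserving bijection.

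With these tools I would run the iteration over the base. Build the \emph{picture} $\calP_0$, the $\calN$-partite structure that places one transversal copy of $\calB$ over each $\calB$-shaped subset of $\calN$. Enumerate the copies $e_1, \dots, e_m$ of $\calA$ in $\calN$. Given $\calP_{t-1}$, apply the partite lemma to the $\calA$-partite slice of $\calP_{t-1}$ lying over $e_t(\calA)$, obtaining an extension $\calR_t$, and form $\calP_t$ by amalgamating a copy of $\calP_{t-1}$ over each transversal copy of that slice inside $\calR_t$; the result is again $\calN$-partite. After $m$ steps let $\calC$ be $\calP_m$ with its partition forgotten. To verify $\calC \longrightarrow (\calB)^{\calA}_k$ I would run the steps backwards: a colouring of the transversal $\calA$-copies of $\calP_m$ is driven to a single colour over $e_m$ on some embedded copy of $\calP_{m-1}$, then over $e_{m-1}$ on a copy of $\calP_{m-2}$, and so on, until on a copy of $\calP_0$ the colour of each transversal $\calA$-copy depends only on the base-copy $e_t$ it lies over. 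This is a $k$-colouring of the $\calA$-copies of $\calN$; by the choice of $\calN$ some $\calB$-shaped subset of $\calN$ is monochromatic for it, and the transversal copy of $\calB$ sitting over that subset in $\calP_0$ (hence in $\calC$) is the required monochromatic copy, its internal $\calA$-copies being transversal and thus among those coloured.

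The main obstacle is the amalgamation bookkeeping inside the iteration: each $\calP_t$ must be assembled so that its relations and linear order extend those of the glued copies into a genuine linearly ordered structure of the given type, and so that no unintended transversal copies of $\calA$ or $\calB$ are created that would escape the colour control supplied by the partite lemma. Both requirements rest on the rigidity granted by the linear order, which is why the order cannot be dropped; the companion technical point is the Hales--Jewett encoding within the partite lemma, where matching combinatorial subspaces to transversal copies while respecting every relation of the type must be set up exactly.
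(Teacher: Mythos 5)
The paper does not prove this theorem: it is quoted as a known background result, with the proof delegated to the cited literature (Abramson--Harrington, Ne\v set\v ril--R\"odl). So there is no in-paper argument to compare yours against; what you have written is, in effect, an outline of the proof from those references. As such it is the right proof and the right decomposition: reduction to $k=2$, a base chain $\calN$ supplied by the Finite Ramsey Theorem, the partite lemma proved via Hales--Jewett, and the iterated amalgamation of pictures with the backward colour-propagation argument. The overall architecture is sound and you have correctly identified where the real work lives.

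Two cautions on the deferred parts, since they are where such a write-up usually goes wrong. First, in the Hales--Jewett encoding you assert that transversal $\calA$-copies of $\calR$ correspond to points of the cube $\Sigma^N$ (where $\Sigma$ is the set of transversal $\calA$-copies of $\calQ$); with relations on $\calR$ defined coordinatewise one must actually check that no \emph{spurious} transversal copies of $\calA$ arise from tuples whose coordinate slices carry too many relations --- for induced (embedded) copies this is the crux of the partite lemma and needs the ordered, part-respecting setup to be arranged exactly, not just asserted. Second, your base $\calN$ is a bare chain, so ``the copies $e_1,\dots,e_m$ of $\calA$ in $\calN$'' should be the $a$-element subsets of $\calN$ (equivalently, copies of the order reduct of $\calA$); the final reduced colouring is a colouring of those subsets, and the ordinary Ramsey theorem then yields the monochromatic $b$-subset over which the transversal copy of $\calB$ in $\calP_0$ sits. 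Neither point is a flaw in the plan, but both must be discharged before this counts as a proof rather than a roadmap.
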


The fact that this result has been proved independently by several research teams, and then reproved
in various ways and in various contexts \cite{AH,Nesetril-Rodl-1983,Nesetril-Rodl-1989,Promel-1985}
clearly demonstrates the importance and justifies the distinguished status
this result has in discrete mathematics.
The search for a dual version of the Ne\v set\v ril-R\"odl Theorem
was and still is an important research direction and several versions of the dual of the
Ne\v set\v ril-R\"odl Theorem have been published, most notably by
Spencer in~\cite{spencer},
Pr\"omel in~\cite{Promel-1985},
Pr\"omel and Voigt in~\cite{promel-voigt-sparse-GR},
Frankl, Graham, R\"odl in~\cite{frankl-graham-rodl} and recently by Solecki in~\cite{Solecki-2010}.

In cite~\cite{GR} Graham and Rothschild proved their famous Graham-Roth\-schild Theorem,
a powerful combinatorial statement about words intended for dealing with the Ramsey property of certain
geometric configurations. The fact that it also implies the following dual Ramsey statement was recognized
almost a decade later.

\begin{THM} (Finite Dual Ramsey Theorem~\cite{GR,Nesetril-Rodl-DRT})\label{dthms.thm.FDRT}
  For all positive integers $k$, $a$, $m$ there is a positive integer $n$ such that
  for every $n$-element set $C$ and every $k$-coloring of the set $\quotient Ca$ of all partitions of
  $C$ with exactly $a$ blocks there is a partition $\beta$ of $C$ with exactly $m$ blocks such that
  the set of all partitions from $\quotient Ca$ which are coarser than $\beta$ is monochromatic.
\end{THM}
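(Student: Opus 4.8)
The plan is to deduce the statement from the Graham--Rothschild Theorem of~\cite{GR}, whose natural setting is the combinatorics of parameter words over a finite alphabet, by translating the language of partitions into the special case of the empty alphabet. To set up the dictionary, fix an arbitrary linear order on the $n$-element set $C$ and identify it with $\{1, \dots, n\}$. Then a partition of $\{1,\dots,n\}$ into exactly $a$ blocks is the same thing as a \emph{rigid surjection} $f \colon \{1,\dots,n\} \to \{1,\dots,a\}$, that is, a surjection with $\min f^{-1}(1) < \min f^{-1}(2) < \dots < \min f^{-1}(a)$: its fibres are the blocks, listed in the order of their least elements. (Such an $f$ is precisely an $a$-parameter word of length $n$ over the empty alphabet, which is why the Graham--Rothschild machinery applies.) Under this identification $\quotient{C}{a}$ becomes the set of all rigid surjections $\{1,\dots,n\} \to \{1,\dots,a\}$.

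Next I would check that ``coarser than'' corresponds exactly to composition. If $\beta \in \quotient{C}{m}$ is represented by the rigid surjection $g \colon \{1,\dots,n\} \to \{1,\dots,m\}$, then a partition $\gamma \in \quotient{C}{a}$ is coarser than $\beta$ precisely when the blocks of $\gamma$ are unions of blocks of $\beta$, i.e.\ when the rigid surjection representing $\gamma$ factors as $h \circ g$ for a surjection $h \colon \{1,\dots,m\} \to \{1,\dots,a\}$. A short computation with least elements shows that $h$ is then automatically rigid, and conversely that $h \circ g$ is rigid whenever both $h$ and $g$ are. Hence the map $h \mapsto h \circ g$ is a bijection from $\quotient{\{1,\dots,m\}}{a}$ onto the set of partitions in $\quotient{C}{a}$ that are coarser than $\beta$, and the latter set is monochromatic for a given colouring exactly when $\{h \circ g : h \text{ rigid}\}$ is.

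With this dictionary the statement is nothing but the Graham--Rothschild Theorem specialised to the empty alphabet: for all $k, a, m$ there is an $n$ such that every $k$-colouring of the rigid surjections $\{1,\dots,n\} \to \{1,\dots,a\}$ admits a rigid surjection $g \colon \{1,\dots,n\} \to \{1,\dots,m\}$ for which $\{h \circ g : h \text{ rigid}\}$ is monochromatic. I would produce this $n$ from the cited theorem, let $\beta$ be the partition represented by the resulting $g$, and then read off the conclusion via the bijection of the previous paragraph. The translation itself is only bookkeeping; the entire weight of the argument rests on the Graham--Rothschild Theorem, and the one point demanding care is to confirm that its proof genuinely covers the degenerate empty-alphabet case invoked here. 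Should one prefer a self-contained argument over citing~\cite{GR}, the main obstacle would be to reprove Graham--Rothschild itself: the standard route is an induction on the number $a$ of parameters in which both the base case and the inductive step are powered by the Hales--Jewett theorem, and it is this combinatorial core, rather than the partition/word correspondence, that constitutes the real difficulty.
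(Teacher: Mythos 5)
Your derivation is correct and coincides with how the paper handles this statement: the paper offers no proof of its own but imports the result from \cite{GR} and \cite{Nesetril-Rodl-DRT}, and its introductory discussion describes precisely your dictionary (partitions of a linearly ordered set as rigid surjections ordered by block minima, coarsenings as compositions, following \cite{promel-voigt-surj-sets}), under which the theorem is the empty-alphabet instance of the Graham--Rothschild Theorem. The one point you rightly flag---checking that the Graham--Rothschild machinery genuinely covers the degenerate empty-alphabet case---is exactly what the cited Ne\v set\v ril--R\"odl note supplies.
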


It was observed in~\cite{promel-voigt-surj-sets} that each partition of a finite linearly ordered set can be
uniquely represented by the rigid surjection which takes each element of the underlying set to the minimum
of the block it belongs to (see Subsection~\ref{dthms.subset.lo} for the definition of a rigid surjection).
Hence, Finite Dual Ramsey Theorem is a structural Ramsey result about finite chains and special surjections between them.
This result was then generalized to trees in~\cite{solecki-dual-ramsey-trees}, and, using a different set of
techniques, to finite permutations in~\cite{masul-drp-perm}. In the setting of finite algebras,
Dual Ramsey theorems for finite boolean algebras and for finite distributive lattices endowed with a
particular linear order were proved in~\cite{masul-mudri}.

In this paper we provide explicit dual Ramsey statements for several classes of finite relational structures.
Instead of embeddings which are crucial for ``direct'' Ramsey results, for each class of structures under
consideration we propose a special class of surjective maps and
prove a dual Ramsey theorem for such a setting. In contrast to on-going Ramsey classification projects
(see for example~\cite{MANY-MANY}) where the research is focused on fine-tuning the objects, in this paper
we advocate the idea that fine-tuning the morphisms is the key to proving dual Ramsey results.
The basic setup of this paper relates strongly to~\cite{Promel-1985} where the
Ne\v set\v ril-R\"odl Theorem is interpreted in the language of category theory using the
concept of indexed categories. The main result of~\cite{Promel-1985}
is the partition theorem for combinatorial cubes.
In this sense it can be considered as a dual of the Ne\v set\v ril-R\"odl Theorem
(without forbidden substructures): objects are combinatorial cubes with selected
combinatorial subspaces and morphisms preserve the types of the selected subspaces. 
In this paper, however, we consider a dual of the Ne\v set\v ril-R\"odl Theorem spelled out
in the language of relational structures and base our approach on~\cite{masulovic-ramsey}
which can be thought of as a simplified version of the approach taken in~\cite{Promel-1985}.

In Section~\ref{dthms.sec.prelim} we give a brief overview of standard notions referring to
linear orders, total quasiorders and first order structures, and prove several technical results.

In Section~\ref{dthms.sec.rplct} we provide basics of category theory and give
a categorical reinterpretation of the Ramsey property
as proposed in~\cite{masulovic-ramsey}. We define the Ramsey property and the dual Ramsey property for
a category and illustrate these notions using some well-known examples. As our concluding example
we prove a dual Ramsey theorem for the category of finite linearly ordered metric spaces and non-expansive
rigid surjections.

Section~\ref{dthms.sec.homs-are-easy} is an \emph{en passant} display of several dual Ramsey theorems for
categories of structures and surjective rigid homomorphisms. All these results are immediate consequences of the
Finite Dual Ramsey Theorem because, as it turns out when dealing with such weak structure maps,
we can always take a sufficiently large empty structure to get the dual Ramsey property.

In Section~\ref{dthms.sec.qmaps} we turn to quotient maps in search of more challenging dual Ramsey results.
We prove dual Ramsey theorems for the following categories: 
the category $\RdlSrqm$ whose objects are finite reflexive digraphs with linear extensions and morphisms are special rigid quotient maps,
the category $\EPosSrqm$ whose objects are finite posets with linear extensions and morphisms are special rigid quotient maps,
the category $\ERhgSrqm(r)$, $r \ge 2$, whose objects are finite linearly ordered reflexive $r$-uniform hypergraphs and morphisms are special rigid quotient maps,
the category $\GraSrqm$ whose objects are finite linearly ordered reflexive graphs and morphisms are special rigid quotient maps,
and a few more subcategories of $\GraSrqm$.

Section~\ref{dthms.sec.DNRT} is devoted to proving yet another dual version
of the Ne\v set\v ril-R\"odl Theorem. We prove that
the class of all finite linearly ordered relational structures all having the same, fixed, relational type
has the dual Ramsey property with respect to a special class of rigid quotient maps.
Note again that this is a restricted formulation of the Ne\v set\v ril-R\"odl Theorem which does not account for
subclasses defined by forbidden ``quotients''.

The paper concludes with Section~\ref{dthms.sec.no-drp-tournaments} where
we prove that the category of finite linearly ordered reflexive tournaments
and rigid surjective homomorphisms does not have the dual Ramsey property.

\section{Preliminaries}
\label{dthms.sec.prelim}

In order to fix notation and terminology in this section we give a brief overview of standard notions referring to
linear orders, total quasiorders, first-order structures and category theory.

\subsection{Linear orders}
\label{dthms.subset.lo}

A \emph{chain} is a pair $\calA = (A, \Boxed\sqsubset)$ where $\sqsubset$ is a linear order on~$A$.
In case $A$ is finite we shall simply write
$\calA = \{a_1 \sqsubset a_2 \sqsubset \ldots \sqsubset a_n\}$.
Following \cite{promel-voigt-surj-sets} we say that a surjection
$
  f : \{a_1 \sqsubset a_2 \sqsubset \ldots \sqsubset a_n\} \to \{b_1 \sqsubset b_2 \sqsubset \ldots \sqsubset b_k\}
$
between two finite chains is \emph{rigid} if
$$
  \min f^{-1}(x) \sqsubset \min f^{-1}(y) \text{ whenever } x \sqsubset y.
$$
Equivalently, $f$ is rigid if for every $s \in \{1, \ldots, n\}$ there is a $t \in \{1, \ldots, k\}$ such that
$f(\{a_1, \ldots, a_s\}) = \{b_1, \ldots, b_t\}$. (In other words, a rigid surjection maps each initial segment
of $\{a_1 \sqsubset a_2 \sqsubset \ldots \sqsubset a_n\}$
onto an initial segment of $\{b_1 \sqsubset b_2 \sqsubset \ldots \sqsubset b_k\}$;
other than that, a rigid surjection is not required to respect the linear orders in question.)

Let $(A_i, \Boxed{\sqsubset_i})$ be finite chains, $1 \le i \le k$.
The linear orders $\sqsubset_i$, $1 \le i \le k$, induce the \emph{anti-lexicographic} order $\sqalex$
on $A_1 \times \ldots \times A_k$ by:
\begin{align*}
  (a_1, \ldots, a_k) \sqalex (b_1, \ldots, b_k) \text{ iff }
    & \text{there is an } s \in \{1, \ldots, k\} \text{ such that}\\
    & a_s \mathrel{\sqsubset_s} b_s, \text{ and } a_j = b_j \text{ for all } j > s.
\end{align*}
In particular, every finite chain $(A, \Boxed\sqsubset)$ induces the anti-lexicographic order $\sqalex$ on $A^n$, $n \ge 2$.

Let $\calA = (A, \Boxed\sqsubset)$ be a finite chain.
The linear order $\sqsubset$ induces the \emph{anti-lexicographic} order $\sqalex$
on $\calP(A)$ as follows. For $X \in \calP(A)$ let $\vec X \in \{0, 1\}^{|A|}$ denote the characteristic vector of $X$.
(As $A$ is linearly ordered, we can assign a string of 0's and 1's to each subset of $A$.)
Then for $X, Y \in \calP(A)$ we let:
$$
  X \sqalex Y \text{\ \ iff\ \ } \vec X \alex \vec Y,
$$
where $<$ is the usual ordering $0 < 1$. It is easy to see that for $X, Y \in \calP(A)$:
\begin{align*}
X \sqalex Y \text{ iff } X \subset Y, &\text{ or }\max\nolimits_\calA(X \setminus Y) \sqsubset \max\nolimits_\calA(Y \setminus X) \text{ in case}\\
                                           &\text{ $X$ and $Y$ are incomparable}.
\end{align*}

\subsection{Total quasiorders}

A \emph{total quasiorder} is a reflexive and transitive binary relation such that each pair of elements
of the underlying set is comparable. Each total quasiorder $\sigma$ on a set $I$ induces an equivalence relation $\equiv_\sigma$
on $I$ and a linear order $\prec_\sigma$ on $I / \Boxed{\equiv_\sigma}$ in a natural way: $i \mathrel{\equiv_\sigma} j$ if
$(i, j) \in \sigma$ and $(j, i) \in \sigma$, while $(i / \Boxed{\equiv_\sigma}) \mathrel{\prec_\sigma} (j / \Boxed{\equiv_\sigma})$
if $(i, j) \in \sigma$ and $(j, i) \notin \sigma$.
For the considerations that follow we need to linearly order all the total quasiorders on the same set, as follows.

\begin{DEF}
  Let $\sigma$ and $\tau$ be two distinct total quasiorders on $I = \{1, 2, \ldots, r\}$. Let
  \begin{align*}
    I / \Boxed{\equiv_\sigma} &= \{S_1 \alex S_2 \alex \ldots \alex S_k\}, \text{ and}\\
    I / \Boxed{\equiv_\tau} &= \{T_1 \alex T_2 \alex \ldots \alex T_\ell\}.
  \end{align*}
  (Here, $\alex$ stands for the anti-lexicographic ordering of $\calP(\{1, 2, \dots, r\})$ induced by the
  usual ordering of the integers.)
  
  We put $\sigma \mathrel{\triangleleft} \tau$ if $k < \ell$, or $k = \ell$ and
  $(S_1, S_2, \ldots, S_k) \mathrel{(\Boxed\alex)_{\mathit{alex}}} (T_1, T_2, \ldots, T_k)$.
  (Here, $(\Boxed\alex)_{\mathit{alex}}$ denotes the anti-lexicographic ordering on
  $\calP(\{1, 2, \dots, r\})^k$ induced by $\alex$ on $\calP(\{1, 2, \dots, r\})$.)
\end{DEF}

Let $(A, \Boxed\sqsubset)$ be a linearly ordered set, let $r$ be a positive integer, let $I = \{1, \ldots, r\}$ and
let $\overline a = (a_1, \ldots, a_r) \in A^r$. Then
$$
  \tp(\overline a) = \{(i, j) : a_i \sqsubseteq a_j \}
$$
is a total quasiorder on $I$ which we refer to as the \emph{type} of~$\overline a$.
Assume that $\sigma = \tp(\overline a)$. Let $s = |I / \Boxed{\equiv_\sigma}|$ and let $i_1$, \ldots, $i_s$
be the representatives of the classes of $\equiv_\sigma$ enumerated so that
$(i_1 / \Boxed{\equiv_\sigma}) \mathrel{\prec_\sigma} \ldots \mathrel{\prec_\sigma} (i_s / \Boxed{\equiv_\sigma})$.
Then
$$
  \mat(\overline a) = (a_{i_1}, \ldots, a_{i_s})
$$
is the \emph{matrix of $\overline a$}. Note that $a_{i_1} \sqsubset \ldots \sqsubset a_{i_s}$.

For a total quasiorder $\sigma$ on $I$ such that
$|I / \Boxed{\equiv_\sigma}| = s$ and an arbitrary
$s$-tuple $\overline b = (b_1, \ldots, b_s) \in A^s$
define an $r$-tuple
$$
  \tup(\sigma, \overline b) = (a_1, \ldots, a_r) \in A^r
$$
as follows. Let $i_1$, \ldots, $i_s$ be the representatives of the classes of $\equiv_\sigma$ enumerated so that
$(i_1 / \Boxed{\equiv_\sigma}) \mathrel{\prec_\sigma} \ldots \mathrel{\prec_\sigma} (i_s / \Boxed{\equiv_\sigma})$.
Then put
$$
  a_\eta = b_\xi \text{ if and only if } \eta \mathrel{\equiv_\sigma} i_\xi.
$$
(In other words, we put $b_1$ on all the entries in $i_1 / \Boxed{\equiv_\sigma}$, we put
$b_2$ on all the entries in $i_2 / \Boxed{\equiv_\sigma}$, and so on.)

It is a matter of routine to check that
for every tuple $\overline a$ and every tuple $\overline b = (b_1, b_2, \ldots, b_s)$
such that $b_1 \sqsubset b_2 \sqsubset \ldots \sqsubset b_s$:
\begin{equation}\label{dthms.eq.tup-mat}
  \begin{aligned}
    \tp(\tup(\sigma, \overline b)) = \sigma, \text{ } \mat(\tup(\sigma, \overline b))    &= \overline b, \text{ and}\\
    \tup(\tp(\overline a), \mat(\overline a)) &= \overline a,\\
  \end{aligned}
\end{equation}

\begin{DEF}
  Let $(A, \Boxed\sqsubset)$ be a finite chain and let $n \ge 2$.
  Define the linear order $\sqsal$ on $A^n$ as follows (``sal'' in the subscript stands for
  ``special anti-lexicographic''). Take any $\overline a, \overline b \in A^n$ where
  $\overline a = (a_1, a_2, \ldots, a_n)$ and $\overline b = (b_1, b_2, \ldots, b_n)$.
  \begin{itemize}
  \item
    If $\tp(\overline a) \triangleleft \tp(\overline b)$ then $\overline a \sqsal \overline b$;
  \item
    if $\tp(\overline a) = \tp(\overline b)$ and $\{a_1, a_2, \ldots, a_n\} \ne \{b_1, b_2, \ldots, b_n\}$ then
    $\overline a \sqsal \overline b \text{ iff } \{a_1, a_2, \ldots, a_n\} \sqalex \{b_1, b_2, \ldots, b_n\}$.
  \end{itemize}
  (Note that $\tp(\overline a) = \tp(\overline b)$ and $\{a_1, a_2, \ldots, a_n\} = \{b_1, b_2, \ldots, b_n\}$
  imply~$\overline a = \overline b$.)
\end{DEF}

\begin{LEM}\label{dthm.lem.tp-mat-facts}
  Let $(A, \Boxed\sqsubset)$ be a finite chain and let $n \ge 2$ be an integer.
  
  $(a)$ For all $\overline a, \overline b \in A^n$ we have that
  $\overline a = \overline b$ iff $\mat(\overline a) = \mat(\overline b)$ and $\tp(\overline a) = \tp(\overline b)$.
  
  $(b)$ Assume that $\tp(\overline a) = \tp(\overline b)$ for some $\overline a, \overline b \in A^n$. Then
  $\overline a \sqsal \overline b$ iff $\mat(\overline a) \sqsal \mat(\overline b)$.
  
  $(c)$ Assume that $\tp(\overline a^1) = \tp(\overline a^2) = \ldots = \tp(\overline a^k)$ for some
  $\overline a^1, \overline a^2, \ldots, \overline a^k \in A^n$. Then
  $\min\{\mat(\overline a^1), \mat(\overline a^2), \ldots, \mat(\overline a^k)\} =
  \mat(\min\{\overline a^1, \overline a^2, \ldots, \overline a^k\})$,
  where both minima are taken with respect to $\sqsal$.
\end{LEM}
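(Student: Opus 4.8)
The plan is to treat part~$(a)$ as an immediate consequence of the identities collected in~\eqref{dthms.eq.tup-mat}, to isolate part~$(b)$ as the substantive step, and then to derive part~$(c)$ from $(a)$ and $(b)$ by a monotonicity argument.

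For part~$(a)$ the forward implication is trivial, since $\tp$ and $\mat$ are functions. For the reverse implication I would use that $\overline a \mapsto (\tp(\overline a), \mat(\overline a))$ and $(\sigma, \overline b) \mapsto \tup(\sigma, \overline b)$ are mutually inverse by~\eqref{dthms.eq.tup-mat}. Thus, assuming $\tp(\overline a) = \tp(\overline b)$ and $\mat(\overline a) = \mat(\overline b)$, I would apply $\tup$ to the common pair and invoke $\tup(\tp(\overline c), \mat(\overline c)) = \overline c$ for both $\overline c = \overline a$ and $\overline c = \overline b$ to conclude $\overline a = \overline b$.

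The core is part~$(b)$. First I would observe that, since $i \mathrel{\equiv_\sigma} j$ holds exactly when $a_i = a_j$, the tuple $\mat(\overline a)$ simply lists the distinct values of $\overline a$ in $\sqsubset$-increasing order; in particular the set of entries of $\mat(\overline a)$ equals $\{a_1, \ldots, a_n\}$, and likewise for $\overline b$. Next, when $\tp(\overline a) = \tp(\overline b)$ the first clause in the definition of $\sqsal$ does not apply, so $\overline a \sqsal \overline b$ is equivalent to $\{a_1, \ldots, a_n\} \ne \{b_1, \ldots, b_n\}$ together with $\{a_1, \ldots, a_n\} \sqalex \{b_1, \ldots, b_n\}$; that is, the comparison is governed solely by the underlying entry-sets. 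Finally, $\tp(\overline a) = \tp(\overline b)$ forces $\mat(\overline a)$ and $\mat(\overline b)$ to have the same length $s$ and to be strictly $\sqsubset$-increasing, so they share a common (strict) type; hence the $\sqsal$-comparison of the two matrices reduces, by the very same clause, to the $\sqalex$-comparison of those same underlying sets. Both sides of the asserted equivalence therefore collapse to one condition on the entry-sets. I would dispatch the degenerate case $s = 1$ (constant tuples, singleton matrices) separately, reading $\sqsal$ on a single coordinate as $\sqsubset$.

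For part~$(c)$ I would combine the previous two parts. Since all of $\overline a^1, \ldots, \overline a^k$ carry the same type, part~$(b)$ gives $\overline a^i \sqsal \overline a^j \iff \mat(\overline a^i) \sqsal \mat(\overline a^j)$, while part~$(a)$ gives $\overline a^i = \overline a^j \iff \mat(\overline a^i) = \mat(\overline a^j)$. Consequently, if $\overline a^{i_0}$ is the $\sqsal$-minimum of the $\overline a^i$, then $\mat(\overline a^{i_0})$ is a lower bound of $\{\mat(\overline a^1), \ldots, \mat(\overline a^k)\}$ lying in that set, hence its minimum, which is exactly the claimed identity. The main obstacle I anticipate lies entirely within part~$(b)$: verifying rigorously that once the types agree the order $\sqsal$ depends only on the entry-sets, and that these sets are preserved under passage to the matrix, while simultaneously tracking the matrices' own $\sqsal$-comparison and the $s = 1$ edge case.
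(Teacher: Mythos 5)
Your proposal is correct and follows essentially the same route as the paper: part $(a)$ is treated as immediate, part $(b)$ is proved by observing that $\mat(\overline a)$ and $\overline a$ have the same set of entries and that, once the types agree, the $\sqsal$-comparison on both sides reduces to the $\sqalex$-comparison of those entry sets, and part $(c)$ is deduced from $(b)$. Your explicit treatment of the singleton-matrix case and the use of the $\tup$/$\mat$ inversion for $(a)$ are only minor elaborations of what the paper leaves implicit.
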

\begin{proof}
  $(a)$ is obvious, and $(c)$ follows directly from $(b)$. So, let us show $(b)$.
  Let $\overline a = (a_1, a_2, \ldots, a_n)$ and $\overline b = (b_1, b_2, \ldots, b_n)$.
  If $\{a_1, a_2, \ldots, a_n\} = \{b_1, b_2, \ldots, b_n\}$ then
  $\tp(\overline a) = \tp(\overline b)$ implies $\overline a = \overline b$.
  Assume, therefore, that $\{a_1, a_2, \ldots, a_n\} \ne \{b_1, b_2, \ldots, b_n\}$. Let
  $\mat(\overline a) = (a_{i_1}, a_{i_2}, \ldots, a_{i_r})$ for some indices
  $i_1, i_2, \ldots, i_r$. Then $\tp(\overline a) = \tp(\overline b)$ implies that
  $\mat(\overline b) = (b_{i_1}, b_{i_2}, \ldots, b_{i_r})$. Note, also, that
  $\{a_1, a_2, \ldots, a_n\} = \{a_{i_1}, a_{i_2}, \ldots, a_{i_r}\}$ and that
  $\{b_1, b_2, \ldots, b_n\} = \{b_{i_1}, b_{i_2}, \ldots, b_{i_r}\}$. Therefore,
  $\overline a \sqsal \overline b$ iff $\{a_1, a_2, \ldots, a_n\} \sqalex \{b_1, b_2, \ldots, b_n\}$
  iff $\{a_{i_1}, a_{i_2}, \ldots, a_{i_r}\} \sqalex \{b_{i_1}, b_{i_2}, \ldots, b_{i_r}\}$ iff
  $\mat(\overline a) \sqsal \mat(\overline b)$.
\end{proof}

\begin{LEM}\label{dthm.lem.tp-mat-facts-2}
  Let $(A, \Boxed\sqsubset)$ and $(B, \Boxed{\sqsubset'})$ be finite chains. For every $n \ge 2$ and every mapping
  $f : A \to B$ define $\hat f : A^n \to B^n$ by
  $$
    \hat f(a_1, a_2, \ldots, a_n) = (f(a_1), f(a_2), \ldots, f(a_n)).
  $$

  $(a)$ For every total quasiorder $\sigma$ such that $|A / \Boxed{\equiv_\sigma}| = n$ and every
  $\overline a \in A^n$ we have that $\tup(\sigma, \hat f(\overline a)) = \hat f(\tup(\sigma, \overline a))$.
  
  $(b)$ Take any $\overline a = (a_1, a_2, \ldots, a_n) \in A^n$ and assume that
  $a_i \mathrel{\sqsubset} a_j \Rightarrow f(a_i) \mathrel{\sqsubset'} f(a_j)$ for all $i$ and $j$.
  Then $\tp(\hat f(\overline a)) = \tp(\overline a)$ and $\mat(\hat f(\overline a)) = \hat f(\mat(\overline a))$.
\end{LEM}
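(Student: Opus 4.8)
The plan is to unwind the definitions in both parts: the lemma amounts to checking that forming a tuple from a quasiorder (part (a)) and reading off the type and matrix of a tuple (part (b)) both commute with the coordinatewise action of $f$. Neither part requires any machinery beyond the definitions of $\tup$, $\tp$, $\mat$ and the order axioms, so the whole argument is a careful bookkeeping of indices.

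For part (a), I would fix the total quasiorder $\sigma$ on $I = \{1, \ldots, r\}$, so that $r$ is the common length of the two output tuples, and let $i_1, \ldots, i_n$ be the class representatives enumerated so that $(i_1/\equiv_\sigma) \prec_\sigma \ldots \prec_\sigma (i_n/\equiv_\sigma)$. I then compare both sides entry by entry. Writing $\tup(\sigma, \overline a) = (c_1, \ldots, c_r) \in A^r$, the defining rule gives $c_\eta = a_\xi$ whenever $\eta \equiv_\sigma i_\xi$, so applying $f$ coordinatewise yields $f(c_\eta) = f(a_\xi)$ under the same condition. On the other hand $\hat f(\overline a) \in B^n$ has $\xi$-th entry $f(a_\xi)$, so $\tup(\sigma, \hat f(\overline a))$ has $\eta$-th entry $f(a_\xi)$ exactly when $\eta \equiv_\sigma i_\xi$. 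Thus the two tuples agree in every coordinate. There is no real obstacle here: $\tup$ merely duplicates entries according to the class structure, and duplication commutes with the coordinatewise map $\hat f$ (the two occurrences of $\hat f$ act on tuples of lengths $n$ and $r$ respectively, but both are just the coordinatewise application of $f$).

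For part (b) the key observation is that the hypothesis $a_i \sqsubset a_j \Rightarrow f(a_i) \sqsubset' f(a_j)$ upgrades to a genuine equivalence on the entries of $\overline a$. I would first establish $a_i \sqsubseteq a_j \iff f(a_i) \sqsubseteq' f(a_j)$: the forward direction splits into the case $a_i \sqsubset a_j$ (apply the hypothesis) and the case $a_i = a_j$ (then $f(a_i) = f(a_j)$); for the reverse direction, if $a_i \sqsubseteq a_j$ failed then $a_j \sqsubset a_i$ by totality of $\sqsubset$, whence $f(a_j) \sqsubset' f(a_i)$ by hypothesis, contradicting $f(a_i) \sqsubseteq' f(a_j)$ by antisymmetry of $\sqsubset'$. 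Granting this biconditional, the equality $\tp(\hat f(\overline a)) = \tp(\overline a)$ is immediate from the definition $\tp(\overline a) = \{(i,j) : a_i \sqsubseteq a_j\}$, since the membership condition defining $\tp(\hat f(\overline a))$ is precisely $f(a_i) \sqsubseteq' f(a_j)$.

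Finally, for the matrix identity I would exploit that equal types determine the same equivalence classes with the same $\prec_\sigma$-ordering, hence the same list of representatives $i_1, \ldots, i_s$. Reading off $\mat(\hat f(\overline a))$ against these representatives gives $(f(a_{i_1}), \ldots, f(a_{i_s}))$, which is exactly $\hat f$ applied to $\mat(\overline a) = (a_{i_1}, \ldots, a_{i_s})$. The only point demanding care, and the closest thing to an obstacle, is the reverse implication $f(a_i) \sqsubseteq' f(a_j) \Rightarrow a_i \sqsubseteq a_j$, where one must invoke both the totality of $\sqsubset$ and the antisymmetry of $\sqsubset'$ to rule out a collapse of the order; once this injectivity-type fact is in hand, the rest is pure definition-chasing.
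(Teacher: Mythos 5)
Your proof is correct and simply fills in the definition-chasing that the paper dismisses with ``Both $(a)$ and $(b)$ are straightforward.'' The coordinatewise comparison in $(a)$, the upgrade of the hypothesis in $(b)$ to the biconditional $a_i \sqsubseteq a_j \Leftrightarrow f(a_i) \mathrel{\sqsubseteq'} f(a_j)$ (using totality and antisymmetry), and the resulting equality of representatives for the matrix identity are exactly the intended argument.
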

\begin{proof}
  Both $(a)$ and $(b)$ are straightforward.
\end{proof}

\begin{LEM}\label{dthms.lem.AUX}
  Let $(A, \Boxed\sqsubset)$ and $(B, \Boxed{\sqsubset'})$ be finite chains. Let $n \ge 2$ be an integer and let
  $\theta \subseteq A^n$ and $\theta' \subseteq B^n$ be relations. Let $\sigma$ be a total quasiorder on
  $\{1, 2, \ldots, n\}$, let $r = |\{1, 2, \ldots, n\} / \Boxed{\equiv_\sigma}| \ge 2$ and let
  \begin{align*}
    \rho  &= \{\mat(\overline x) : \overline x \in \theta \text{ and } \tp(\overline x) = \sigma\} \subseteq A^r, \text{ and}\\
    \rho' &= \{\mat(\overline x) : \overline x \in \theta' \text{ and } \tp(\overline x) = \sigma\} \subseteq B^r.
  \end{align*}
  Furthermore, let $f : A \to B$ be a mapping such that:
  \begin{itemize}
  \item
    for every $(x_1, x_2, \ldots, x_n) \in \theta$,
    if $\restr{f}{\{x_1, x_2, \ldots, x_n\}}$ is not a constant map
    then $x_i \mathrel{\sqsubset} x_j \Rightarrow
    f(x_i) \mathrel{\sqsubset'} f(x_j)$ for all $i$ and $j$, and
  \item
    $\restr{\hat f}{\theta} : \theta \to \theta'$ is well defined (that is, $\hat f(\overline x) \in \theta'$ for all $\overline x \in \theta$)
    and surjective.
  \end{itemize}
  Then $\restr{\hat f}{\rho} : \rho \to \rho'$ is well defined, surjective, and for all $\overline p \in \theta'$
  such that $\tp(\overline p) = \sigma$ we have that
  $\min(\restr{\hat f}{\rho}^{-1}(\mat(\overline p))) = \mat(\min \restr{\hat f}{\theta}^{-1}(\overline p))$,
  where both minima are taken with respect to~$\sqsal$.
\end{LEM}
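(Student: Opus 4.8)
The plan is to funnel everything through the two technical Lemmas~\ref{dthm.lem.tp-mat-facts} and~\ref{dthm.lem.tp-mat-facts-2}, after isolating the one observation that activates the hypothesis on $f$. Since $r = |\{1,\ldots,n\}/\equiv_\sigma| \ge 2$, every tuple of type $\sigma$ takes at least two distinct values, and so does every $\hat f$-preimage of such a tuple. Hence, whenever $\overline x \in \theta$ has $\tp(\hat f(\overline x)) = \sigma$ (in particular whenever $\hat f(\overline x) = \overline p$), the image $\hat f(\overline x)$ is non-constant, so $\restr{f}{\{x_1,\ldots,x_n\}}$ is non-constant, the order-preserving clause of the hypothesis fires, and Lemma~\ref{dthm.lem.tp-mat-facts-2}$(b)$ yields both $\tp(\overline x) = \tp(\hat f(\overline x)) = \sigma$ and the commutation identity $\mat(\hat f(\overline x)) = \hat f(\mat(\overline x))$. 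This commutation of $\mat$ with $\hat f$ on the type-$\sigma$ part is the engine behind all three assertions.

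For surjectivity I would take an arbitrary $\overline v = \mat(\overline p) \in \rho'$ with $\overline p \in \theta'$ and $\tp(\overline p) = \sigma$, use surjectivity of $\restr{\hat f}{\theta}$ to choose $\overline x \in \theta$ with $\hat f(\overline x) = \overline p$, and then invoke the opening observation to get $\tp(\overline x) = \sigma$, so that $\mat(\overline x) \in \rho$ and $\hat f(\mat(\overline x)) = \mat(\hat f(\overline x)) = \mat(\overline p) = \overline v$. For well-definedness, given $\overline u \in \rho$ I would recover its unique preimage $\overline x = \tup(\sigma, \overline u) \in \theta$ from~\eqref{dthms.eq.tup-mat}; provided $\hat f$ does not collapse the support of $\overline x$ to a point, the same commutation gives $\hat f(\overline u) = \mat(\hat f(\overline x)) \in \rho'$ because $\hat f(\overline x) \in \theta'$ then has type $\sigma$.

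The substance is the minimum formula. Here I would fix $\overline p \in \theta'$ with $\tp(\overline p) = \sigma$ and compare the two fibres $F_\theta = \restr{\hat f}{\theta}^{-1}(\overline p)$ and $F_\rho = \restr{\hat f}{\rho}^{-1}(\mat(\overline p))$. First, every $\overline x \in F_\theta$ satisfies $\tp(\overline x) = \sigma$, directly from the opening observation, since $\hat f(\overline x) = \overline p$ is non-constant; so $F_\theta$ carries a single type. Second, $\mat$ maps $F_\theta$ onto $F_\rho$: the inclusion $\mat(F_\theta) \subseteq F_\rho$ is again the commutation identity, while for the reverse inclusion I would, given $\overline u \in F_\rho$, set $\overline x = \tup(\sigma, \overline u)$ and compute $\hat f(\overline x) = \hat f(\tup(\sigma, \overline u)) = \tup(\sigma, \hat f(\overline u)) = \tup(\sigma, \mat(\overline p)) = \overline p$ using Lemma~\ref{dthm.lem.tp-mat-facts-2}$(a)$ and $\tp(\overline p) = \sigma$, so that $\overline x \in F_\theta$ and $\mat(\overline x) = \overline u$. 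Because all of $F_\theta$ carries the single type $\sigma$, Lemma~\ref{dthm.lem.tp-mat-facts}$(c)$ applies verbatim and gives $\mat(\min F_\theta) = \min \mat(F_\theta) = \min F_\rho$ (all minima with respect to $\sqsal$), which is precisely the claimed identity.

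The step I expect to be the real obstacle is not any of the manipulations above but the degenerate possibility left open by the guarded hypothesis: $f$ may be constant on the support $\{u_1,\ldots,u_r\}$ of the representative of some $\overline u \in \rho$, in which case $\hat f(\overline u)$ is a constant $r$-tuple whose type has a single class and which therefore cannot be the matrix of any type-$\sigma$ tuple of $\theta'$, so $\hat f(\overline u) \notin \rho'$. This is exactly the case the order-preserving clause never constrains, and it is the sole delicate point in blanket well-definedness. It is, however, invisible to both surjectivity and the minimum formula, since these only ever involve fibres over the non-constant target $\overline p$, where the opening observation rules collapsing out; so I would handle it by recording that any support-collapsing $\overline x$ produces a constant (hence non-type-$\sigma$) element of $\theta'$ that is irrelevant to $\rho'$, and, if needed, by restricting the well-definedness claim to the type-$\sigma$-preserving part of $\rho$ guaranteed by the ambient applications. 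With that proviso, the three assertions follow mechanically from Lemmas~\ref{dthm.lem.tp-mat-facts} and~\ref{dthm.lem.tp-mat-facts-2}.
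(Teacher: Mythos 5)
Your proposal is correct and follows essentially the same route as the paper's proof: the engine in both is the observation that $r \ge 2$ forces any $\hat f$-preimage of a type-$\sigma$ tuple to have non-constant $f$-restriction on its support, so that Lemma~\ref{dthm.lem.tp-mat-facts-2}$(b)$ delivers type preservation and the commutation $\mat(\hat f(\overline x)) = \hat f(\mat(\overline x))$; both proofs then establish the fibre identity $\restr{\hat f}{\rho}^{-1}(\mat(\overline p)) = \mat\bigl(\restr{\hat f}{\theta}^{-1}(\overline p)\bigr)$ and finish with Lemma~\ref{dthm.lem.tp-mat-facts}$(c)$. The only cosmetic difference is in the reverse inclusion of that fibre identity: the paper recovers $\overline p = \hat f(\overline x)$ from the equality of types and matrices via Lemma~\ref{dthm.lem.tp-mat-facts}$(a)$, whereas you reconstruct the preimage explicitly as $\tup(\sigma,\overline u)$ and use Lemma~\ref{dthm.lem.tp-mat-facts-2}$(a)$ together with \eqref{dthms.eq.tup-mat}; these are interchangeable. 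Your flagged degenerate case is a genuine catch rather than an artifact of your approach: the paper's proof disposes of well-definedness in one sentence, but if $f$ collapses the support of some type-$\sigma$ tuple $\overline x \in \theta$, then $\hat f(\mat(\overline x))$ is a constant $r$-tuple with $r \ge 2$, which cannot be the matrix of any type-$\sigma$ element of $\theta'$ and hence lies outside $\rho'$, so the blanket claim "$\restr{\hat f}{\rho} : \rho \to \rho'$ is well defined" can literally fail. Your diagnosis of why this is harmless is also the right one: surjectivity and the minimum formula only ever inspect fibres over non-constant targets, where the collapse is excluded, and in every application of the lemma the codomain relation is of the form $\Delta_{B,r} \union \rho'$, which absorbs the collapsed images.
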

\begin{proof}
  For notational convenience let $\hat f_\theta = \restr{\hat f}{\theta}$ and $\hat f_\rho = \restr{\hat f}{\rho}$.
  Lemmas~\ref{dthm.lem.tp-mat-facts} and~\ref{dthm.lem.tp-mat-facts-2} ensure that
  $\hat f_\rho : \rho \to \rho'$ is well defined and surjective. Take any $\overline p \in \theta'$
  such that $\tp(\overline p) = \sigma$ and let
  $\hat f_\theta^{-1}(\overline p) = \{\overline a^1, \overline a^2, \ldots, \overline a^k\}$.
  Let us first show that $\hat f_\rho^{-1}(\mat(\overline p)) = \{\mat(\overline a^1), \mat(\overline a^2), \ldots,
  \mat(\overline a^k)\}$.
  
  $(\supseteq)$ Take any~$i \in \{1, 2, \ldots, k\}$. Let us first show that $\tp(\overline a^i) = \sigma$
  for all~$i$. Since $\hat f(\overline a^i) = \overline p$ and $r \ge 2$ it follows that
  $\restr{f}{\{a^i_1, a^i_2, \ldots, a^i_n\}}$ is not a constant map. Then by the assumption
  (the first bullet above) we have that $a^i_s \mathrel{\sqsubset} a^i_t \Rightarrow
  f(a^i_s) \mathrel{\sqsubset'} f(a^i_t)$ for all $s$ and $t$.
   now yields that
  $\tp(\overline a^i) = \tp(\hat f(\overline a^i))$. Since $\hat f(\overline a^i) =
  \hat f_\theta(\overline a^i) = \overline p$, it follows that
  $\tp(\overline a^i) = \tp(\overline p) = \sigma$.
  Applying Lemma~\ref{dthm.lem.tp-mat-facts-2}~$(b)$ once again gives
  $\hat f_\rho(\mat(\overline a^i)) = \mat(\hat f_\theta(\overline a^i)) = \mat(\overline p)$.
  
  $(\subseteq)$ Take any $\overline u \in \hat f_\rho^{-1}(\mat(\overline p))$.
  Then $\overline u = \mat(\overline x)$ for some
  $\overline x \in \theta$ such that $\tp(\overline x) = \sigma$, so
  $\mat(\overline p) = \hat f_\rho(\overline u) = \hat f_\rho(\mat(\overline x)) = \mat(\hat f_\theta(\overline x))$
  (Lemma~\ref{dthm.lem.tp-mat-facts-2}).
  On the other hand, $\tp(\overline p) = \sigma = \tp(\overline x) = \tp(\hat f_\theta(\overline x))$
  using Lemma~\ref{dthm.lem.tp-mat-facts-2} once more.
  So, $\mat(\overline p) = \mat(\hat f_\theta(\overline x))$ and $\tp(\overline p) = \tp(\hat f_\theta(\overline x))$
  whence, by Lemma~\ref{dthm.lem.tp-mat-facts}, $\overline p = \hat f_\theta(\overline x)$. In other words,
  $\overline x \in \hat f_\theta^{-1}(\overline p)$ whence $\overline x = \overline a^i$ for some~$i$.
  Then $\overline u = \mat(\overline x) = \mat(\overline a^i)$ for some~$i$.
  
  \medskip
  
  Let us now show that $\min(\hat f_\rho^{-1}(\mat(\overline p))) = \mat(\min \hat f_\theta^{-1}(\overline p))$.
  Lemma~\ref{dthm.lem.tp-mat-facts-2} yields that
  $\tp(\overline a^1) = \tp(\overline a^2) = \ldots = \tp(\overline a^k)$ so by
  Lemma~\ref{dthm.lem.tp-mat-facts} we have that
  \begin{align*}
    \min(\hat f_\rho^{-1}(\mat(\overline p)))
      &= \min\{\mat(\overline a^1), \mat(\overline a^2), \ldots, \mat(\overline a^k)\}\\
      &= \mat(\min\{\overline a^1, \overline a^2, \ldots, \overline a^k\})\\
      &= \mat(\min \hat f_\theta^{-1}(\overline p)).
  \end{align*}
  This concludes the proof.
\end{proof}

\subsection{Structures}

Let $\Theta$ be a set of relational symbols.
A \emph{$\Theta$-structure} $\calA = (A, \Theta^\calA)$ is a set $A$ together with a set $\Theta^\calA$ of
relations on $A$ which are interpretations of the corresponding symbols in $\Theta$.
The interpretation of a symbol $\theta \in \Theta$ in the $\Theta$-structure $\calA$ will be denoted by $\theta^\calA$.
The underlying set of a structure $\calA$, $\calB$, $\calC$, \ldots\ will always be denoted by its roman letter $A$, $B$, $C$, \ldots\ respectively.
A structure $\calA = (A, \Theta^\calA)$ is \emph{finite} if $A$ is a finite set.
A structure $\calA = (A, \Theta^\calA)$ is \emph{reflexive} if the following holds for every $\theta \in \Theta$, where
$r = \arity(\theta)$:
$$
  \Delta_{A, r} = \{(\,\underbrace{a, a, \ldots, a}_{r}\,) : a \in A\} \subseteq \theta^\calA.
$$
In case of reflexive structures unary relations play no role because every reflexive unary relation is trivial.

Let $\calA$ and $\calB$ be $\Theta$-structures. A mapping $f : A \to B$
is a \emph{homomorphism} from $\calA$ to $\calB$, and we write $f : \calA \to \calB$, if
$$
  (a_1, \ldots, a_r) \in \theta^\calA \Rightarrow (f(a_1), \ldots f(a_r)) \in \theta^\calB,
$$
for each $\theta \in \Theta$ and $a_1, \ldots, a_r \in A$.
A homomorphism $f : \calA \to \calB$ is an \emph{embedding} if $f$ is injective and
$$
  (f(a_1), \ldots f(a_r)) \in \theta^\calB \Leftrightarrow (a_1, \ldots, a_r) \in \theta^\calA,
$$
for each $\theta \in \Theta$ and $a_1, \ldots, a_r \in A$. A homomorphism $f : \calA \to \calB$ is a
\emph{quotient map} if $f$ is surjective and for every $\theta \in \Theta$ and $(b_1, \ldots, b_r) \in \theta^\calB$
there exists an $(a_1, \ldots, a_r) \in \theta^\calA$ such that $f(a_i) = b_i$, $1 \le i \le r$.

Let $\Theta$ be a relational language and let $\Boxed\sqsubset \notin \Theta$ be a binary relational symbol.
A \emph{linearly ordered $\Theta$-structure} is a
$(\Theta \union \{\Boxed\sqsubset\})$-structure $\calA = (A, \Theta^\calA, \Boxed{\sqsubset^\calA})$
where $(A, \Theta^\calA)$ is a $\Theta$-structure and $\sqsubset^\calA$ is a linear order on~$A$.
A linearly ordered $\Theta$-structure $\calA = (A, \Theta^\calA, \Boxed{\sqsubset^\calA})$ is \emph{reflexive}
if $(A, \Theta^\calA)$ is a reflexive $\Theta$-structure.

\section{Category theory and the Ramsey property}
\label{dthms.sec.rplct}

In order to specify a \emph{category} $\CC$ one has to specify
a class of objects $\Ob(\CC)$, a set of morphisms $\hom_\CC(A, B)$ for all $A, B \in \Ob(\CC)$,
the identity morphism $\id_A$ for all $A \in \Ob(\CC)$, and
the composition of mor\-phi\-sms~$\cdot$~so that
$\id_B \cdot f = f \cdot \id_A = f$ for all $f \in \hom_\CC(A, B)$, and
$(f \cdot g) \cdot h = f \cdot (g \cdot h)$ whenever the compositions are defined.
A morphism $f \in \hom_\CC(B, C)$ is \emph{monic} or \emph{left cancellable} if
$f \cdot g = f \cdot h$ implies $g = h$ for all $g, h \in \hom_\CC(A, B)$ where $A \in \Ob(\CC)$ is arbitrary.
A morphism $f \in \hom_\CC(B, C)$ is \emph{epi} or \emph{right cancellable} if
$g \cdot f = h \cdot f$ implies $g = h$ for all $g, h \in \hom_\CC(C, D)$ where $D \in \Ob(\CC)$ is arbitrary.
A morphism $f \in \hom_\CC(A, B)$ is \emph{invertible} if there exists a morphism $g \in \hom_\CC(B, A)$
such that $g \cdot f = \id_A$ and $f \cdot g = \id_B$.
Let $\Aut_\CC(A)$ denote the set of all the invertible morphisms in $\hom_\CC(A, A)$.
An object $A \in \Ob(\CC)$ is \emph{rigid} if $\Aut_\CC(A) = \{\id_A\}$.

\begin{EX}
  Let $\CH$ denote the the category whose objects are finite chains
  and whose morphisms are embeddings.
\end{EX}

\begin{EX}
  By $\REL(\Theta, \Boxed\sqsubset)$ we denote the category whose objects are finite linearly ordered
  $\Theta$-structures and whose morphisms are embeddings.
\end{EX}

\begin{EX}\label{drpperm.ex.CHrs-def}
  The composition of two rigid surjections is again a rigid surjection, so finite chains and rigid surjections constitute a category
  which we denote by~$\CHrs$.
\end{EX}

For a category $\CC$, the \emph{opposite category}, denoted by $\CC^\op$, is the category whose objects
are the objects of $\CC$, morphisms are formally reversed so that
$
  \hom_{\CC^\op}(A, B) = \hom_\CC(B, A)
$,
and so is the composition:
$
  f \cdot_{\CC^\op} g = g \cdot_\CC f
$.

A category $\DD$ is a \emph{subcategory} of a category $\CC$ if $\Ob(\DD) \subseteq \Ob(\CC)$ and
$\hom_\DD(A, B) \subseteq \hom_\CC(A, B)$ for all $A, B \in \Ob(\DD)$.
A category $\DD$ is a \emph{full subcategory} of a category $\CC$ if $\Ob(\DD) \subseteq \Ob(\CC)$ and
$\hom_\DD(A, B) = \hom_\CC(A, B)$ for all $A, B \in \Ob(\DD)$.

A \emph{functor} $F : \CC \to \DD$ from a category $\CC$ to a category $\DD$ maps $\Ob(\CC)$ to
$\Ob(\DD)$ and maps morphisms of $\CC$ to morphisms of $\DD$ so that
$F(f) \in \hom_\DD(F(A), F(B))$ whenever $f \in \hom_\CC(A, B)$, $F(f \cdot g) = F(f) \cdot F(g)$ whenever
$f \cdot g$ is defined, and $F(\id_A) = \id_{F(A)}$.

Categories $\CC$ and $\DD$ are \emph{isomorphic} if there exist functors $F : \CC \to \DD$ and $G : \DD \to \CC$ which are
inverses of one another both on objects and on morphisms.

The \emph{product} of categories $\CC_1$ and $\CC_2$ is the category $\CC_1 \times \CC_2$ whose objects are pairs $(A_1, A_2)$
where $A_1 \in \Ob(\CC_1)$ and $A_2 \in \Ob(\CC_2)$, morphisms are pairs $(f_1, f_2) : (A_1, A_2) \to (B_1, B_2)$ where
$f_1 : A_1 \to B_1$ is a morphism in $\CC_1$ and $f_2 : A_2 \to B_2$ is a morphism in $\CC_2$.
The composition of morphisms is carried out componentwise: $(f_1, f_2) \cdot (g_1, g_2) = (f_1 \cdot g_1, f_2 \cdot g_2)$.

Let $\CC$ be a category and $\calS$ a set. We say that
$
  \calS = \calX_1 \union \ldots \union \calX_k
$
is a \emph{$k$-coloring} of $\calS$ if $\calX_i \sec \calX_j = \0$ whenever $i \ne j$.
Equivalently, a $k$-coloring of $\calS$ is any map $\chi : \calS \to \{1, 2, \ldots, k\}$.
For an integer $k \ge 2$ and $A, B, C \in \Ob(\CC)$ we write
$
  C \longrightarrow (B)^{A}_k
$
to denote that for every $k$-coloring
$
  \hom_\CC(A, C) = \calX_1 \union \ldots \union \calX_k
$
there is an $i \in \{1, \ldots, k\}$ and a morphism $w \in \hom_\CC(B, C)$ such that
$w \cdot \hom_\CC(A, B) \subseteq \calX_i$.

\begin{DEF}
  A category $\CC$ has the \emph{Ramsey property} if
  for every integer $k \ge 2$ and all $A, B \in \Ob(\CC)$
  such that $\hom_\CC(A, B) \ne \0$ there is a
  $C \in \Ob(\CC)$ such that $C \longrightarrow (B)^{A}_k$.
  
  A category $\CC$ has the \emph{dual Ramsey property} if $\CC^\op$ has the Ramsey property.
\end{DEF}

Clearly, if $\CC$ and $\DD$ are isomorphic categories and one of them has the (dual) Ramsey property,
then so does the other. Actually, even more is true: if $\CC$ and $\DD$ are equivalent categories
and one of them has the (dual) Ramsey property, then so does the other. We refrain from providing the
definition of (the fairly standard notion of) categorical equivalence as we shall have no use for it
in this paper, and for the proof we refer the reader to~\cite{masulovic-ramsey}.

\begin{EX}\label{cerp.ex.FRT-ch}
  The category $\CH$ of finite chains and embeddings
  has the Ramsey property. This is just a reformulation of the Finite Ramsey Theorem (Theorem~\ref{dthms.thm.frt}).
\end{EX}

\begin{EX}
  The category $\REL(\Theta, \Boxed\sqsubset)$ has the Ramsey property. This is the famous
  Ne\v set\v ril-R\"odl Theorem (Theorem~\ref{dthms.thm.NRT}).
\end{EX}

\begin{EX}\label{cerp.ex.FDRT-ch}
  The category $\CHrs$ of finite chains and rigid surjections
  has the dual Ramsey property. This is just a reformulation of the Finite Dual Ramsey Theorem (Theorem~\ref{dthms.thm.FDRT};
  see also the discussion in the Introduction.)
\end{EX}

One of the main benefits of considering the Ramsey property in the setting of category theory is
the Duality Principle which is a metatheorem of category theory:

\begin{quote}
  \textbf{The Duality Principle.}
  \textsl{If a statement $\phi$ is true in a category $\CC$, then the opposite statement $\phi^\op$ is true in~$\CC^\op$.}
\end{quote}

For a detailed technical discussion and the precise definition of $\phi^\op$ we refer the reader to~\cite{AHS}.
Here, however, we would like to stress that the Duality Principle not only allows us to present
an elegant definition of the dual Ramsey property
but actually saves quite a lot of work, in particular in situations where we want to reuse the
existing Ramsey-type results to infer dual Ramsey-type results. For example,
in \cite[Proposition~2.3]{masulovic-ramsey} we proved that if
$\CC$ is a category where morphisms are monic and $\CC$
has the Ramsey property then all the objects in $\CC$ are rigid.
As an immediate consequence of the Duality Principle we have the following
(bearing in mind that rigidity is a self-dual notion):

\begin{COR}\label{dthms.cor.rigid}
  Let $\CC$ be a category where morphisms are epi. If $\CC$
  has the dual Ramsey property then all the objects in $\CC$ are rigid.
\end{COR}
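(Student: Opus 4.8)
The plan is to invoke the Duality Principle together with the cited result from \cite{masulovic-ramsey}, so that the corollary follows with essentially no new work. First I would recall the statement of \cite[Proposition~2.3]{masulovic-ramsey}: if $\CC$ is a category in which all morphisms are monic and $\CC$ has the Ramsey property, then every object of $\CC$ is rigid. This is the ``direct'' result I want to dualize, and it is exactly the self-dual target statement read in reverse.

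Next I would set up the dualization carefully. Let $\CC$ be a category in which all morphisms are epi, and suppose $\CC$ has the dual Ramsey property. By definition, the latter means precisely that $\CC^\op$ has the Ramsey property. The key observation is that a morphism $f$ is epi in $\CC$ if and only if it is monic in $\CC^\op$: this is immediate from unwinding the definitions, since right cancellability $g \cdot f = h \cdot f \Rightarrow g = h$ in $\CC$ becomes left cancellability $f \cdot_{\CC^\op} g = f \cdot_{\CC^\op} h \Rightarrow g = h$ in $\CC^\op$ under the reversal $f \cdot_{\CC^\op} g = g \cdot_\CC f$. Hence ``all morphisms of $\CC$ are epi'' translates into ``all morphisms of $\CC^\op$ are monic.''

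Now I would apply \cite[Proposition~2.3]{masulovic-ramsey} directly to $\CC^\op$: since $\CC^\op$ has the Ramsey property and all its morphisms are monic, every object of $\CC^\op$ is rigid. The final step is to note that rigidity is a self-dual notion—an object $A$ is rigid when $\Aut(A) = \{\id_A\}$, and the invertible endomorphisms of $A$ in $\CC$ coincide with those in $\CC^\op$ (an inverse pair remains an inverse pair under reversal, and $\id_A$ is its own image)—so $\Aut_{\CC^\op}(A) = \Aut_\CC(A)$ for every object $A$. Therefore the objects of $\CC^\op$, which are literally the objects of $\CC$, are rigid in $\CC$ as well, which is the desired conclusion.

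I do not expect any genuine obstacle here; the content of the corollary is entirely packaged inside the Duality Principle. The only point requiring a moment's care is the bookkeeping of the two self-dual or dual-swapping ingredients, namely that ``epi'' and ``monic'' are interchanged under passage to the opposite category while ``rigid'' is preserved. Once these two facts are recorded, the proof is a one-line application of the already-established direct statement to $\CC^\op$. If one preferred to avoid explicit reference to \cite[Proposition~2.3]{masulovic-ramsey} and instead wished to present the argument as a formal instance of the Duality Principle, one would simply observe that the proposition's statement $\phi$ (``monic morphisms $+$ Ramsey $\Rightarrow$ rigid objects'') has opposite statement $\phi^\op$ (``epi morphisms $+$ dual Ramsey $\Rightarrow$ rigid objects'') which is exactly the corollary; but the direct application above is the cleaner route.
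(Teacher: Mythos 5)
Your proposal is correct and follows exactly the paper's route: the corollary is stated there as an immediate consequence of applying \cite[Proposition~2.3]{masulovic-ramsey} to $\CC^\op$ via the Duality Principle, noting that epi in $\CC$ means monic in $\CC^\op$ and that rigidity is self-dual. You have merely spelled out the bookkeeping that the paper leaves implicit.
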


The following simple technical result will be useful later.

\begin{LEM}\label{dthms.lem.1}
  Let $\CC$ be a category,
  let $A, B, C, D \in \Ob(\CC)$ be arbitrary and let $k \ge 2$ be an integer.
  If $C \longrightarrow (B)^A_k$ and $\hom(C, D) \ne \0$ then $D \longrightarrow (B)^A_k$.
\end{LEM}
\begin{proof}
  Fix an arbitrary $s \in \hom(C, D)$. Let 
  $
    \hom(A, D) = \calX_1 \union \ldots \union \calX_k
  $
  be a $k$-coloring and let 
  $
    \calY_i = \{f \in \hom(A, C) : s \cdot f \in \calX_i\}
  $.
  It is easy to see that
  $
    \hom(A, C) = \calY_1 \union \ldots \union \calY_k
  $
  is a $k$-coloring.
  Since $C \longrightarrow (B)^A_k$, there is an $i \in \{1, \ldots, k\}$ and a morphism $w \in \hom(B, C)$ such that
  $w \cdot \hom(A, B) \subseteq \calY_i$. But then
  $s \cdot w \cdot \hom(A, B) \subseteq s \cdot \calY_i \subseteq \calX_i$.
\end{proof}

As our concluding example we shall prove the dual Ramsey theorem for
linearly ordered metric spaces. A \emph{linearly ordered metric space} is a triple $\calM = (M, d, \Boxed\sqsubset)$
where $d : M^2 \to \RR$ is a metric on $M$ and $\sqsubset$ is a linear order on~$M$.
For a positive integer $n$ and a positive real number $\delta$ let
$$
  \calM_{n,\delta} = (\{1, 2, \ldots, n\}, d_n^\delta, \Boxed<)
$$
be the linearly ordered metric space where $<$ is the usual ordering of the integers and $d_n^\delta(x, y) = \delta$ whenever $x \ne y$.
A mapping $f : M \to M'$ is a
\emph{non-expansive rigid surjection} from $(M, d, \Boxed\sqsubset)$ to $(M', d', \Boxed{\sqsubset'})$ if
\begin{itemize}
\item
  $f : (M, d) \to (M', d')$ is non-expansive, that is, $d'(f(x), f(y)) \le d(x, y)$ for all $x, y \in M$, and
\item
  $f : (M, \Boxed\sqsubset) \to (M', \Boxed{\sqsubset'})$ is a rigid surjection.
\end{itemize}
Let $\MetNers$ be the category whose objects are finite
linearly ordered metric spaces and morphisms are non-expansive rigid surjections.

For a linearly ordered metric space $\calM = (M, d, \Boxed\sqsubset)$ let
$\Spec(\calM) = \{d(x, y) : x, y \in M \text{ and } x \ne y\}$. For a subcategory $\CC$ of $\MetNers$ let
$$
  \Spec(\CC) = \bigcup \{\Spec(\calM) : \calM \in \Ob(\CC)\}.
$$

\begin{THM}\label{dthms.thm.met}
  Let $\CC$ be a full subcategory of $\MetNers$ such that for every positive integer $m$ and every $\delta \in \Spec(\CC)$
  there is an integer $n \ge m$ such that $\calM_{n,\delta} \in \Ob(\CC)$. Then
  $\CC$ has the dual Ramsey property. In particular, the following categories have the dual Ramsey property:
  \begin{itemize}
  \item
    the category $\MetNers$;

  \item
    the category $\MetNers(S)$, $S \subseteq \RR$, which stands for the full subcategory of $\MetNers$ spanned
    by all those finite linearly ordered metric spaces $\calM$ such that $\Spec(\calM) \subseteq S$.
  \end{itemize}
\end{THM}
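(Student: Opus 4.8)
The plan is to unwind the definition of the dual Ramsey property and reduce the whole statement to the Finite Dual Ramsey Theorem (Example~\ref{cerp.ex.FDRT-ch}) applied to the \emph{underlying chains} of the spaces involved. Fix $k \ge 2$ and $\calA, \calB \in \Ob(\CC)$ with $\hom_\CC(\calB, \calA) \ne \0$; writing $A^* = (A, \Boxed{\sqsubset^\calA})$ and $B^* = (B, \Boxed{\sqsubset^\calB})$ for the underlying chains, I must produce $\calC \in \Ob(\CC)$ with $\calC \longrightarrow (\calB)^\calA_k$ computed in $\CC^\op$; that is, for every $k$-coloring of $\hom_\CC(\calC, \calA)$ there must be a colour class containing $\{f \cdot w : f \in \hom_\CC(\calB, \calA)\}$ for some $w \in \hom_\CC(\calC, \calB)$. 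The key idea is to take $\calC$ to be a sufficiently large \emph{uniform} space $\calM_{n, \delta}$, with $\delta$ chosen so large that the metric imposes no constraint whatsoever on maps issuing from $\calC$.

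Concretely, set $\delta = \max(\diam \calA, \diam \calB)$ (the degenerate one-point cases, where $\hom_\CC(\calB, \calA)$ is a singleton, are trivial and handled separately). Then $\delta \in \Spec(\CC)$ because it is realised as a distance inside $\calA$ or $\calB$. The first step is the observation that, since every non-zero distance in $\calA$ is at most $\delta$, \emph{every} rigid surjection of the underlying chains $\{1, \ldots, n\} \to A^*$ is automatically non-expansive as a map $\calM_{n,\delta} \to \calA$; hence $\hom_\CC(\calM_{n,\delta}, \calA) = \hom_{\CHrs}(\{1, \ldots, n\}, A^*)$ \emph{as sets of functions}, and likewise $\hom_\CC(\calM_{n,\delta}, \calB) = \hom_{\CHrs}(\{1, \ldots, n\}, B^*)$. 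By contrast, $\hom_\CC(\calB, \calA)$ is merely a \emph{subset} of $\hom_{\CHrs}(B^*, A^*)$ --- the metric may rule some rigid surjections out --- and this inclusion points in the favourable direction for monochromaticity.

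Next I invoke the dual Ramsey property of $\CHrs$. Since $\hom_\CC(\calB, \calA) \ne \0$ we have $\hom_{\CHrs}(B^*, A^*) \ne \0$, so Example~\ref{cerp.ex.FDRT-ch} furnishes a chain, say of size $N$, witnessing $\{1, \ldots, N\} \longrightarrow (B^*)^{A^*}_k$ in $\CHrs^\op$. Using the hypothesis on $\CC$ with $m = N$ I obtain an $n \ge N$ with $\calM_{n,\delta} \in \Ob(\CC)$; since there is a rigid surjection $\{1, \ldots, n\} \to \{1, \ldots, N\}$, Lemma~\ref{dthms.lem.1} (applied in $\CHrs^\op$) upgrades this to $\{1, \ldots, n\} \longrightarrow (B^*)^{A^*}_k$. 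I then set $\calC = \calM_{n,\delta}$. Any $k$-colouring of $\hom_\CC(\calC, \calA)$ is, by the identification above, a $k$-colouring of $\hom_{\CHrs}(\{1, \ldots, n\}, A^*)$; the chain result yields a colour $i$ and a rigid surjection $w : \{1, \ldots, n\} \to B^*$ with $\{f \cdot w : f \in \hom_{\CHrs}(B^*, A^*)\}$ monochromatic of colour $i$. Reading $w$ back as an element of $\hom_\CC(\calC, \calB)$ and restricting the family of $f$'s to the subset $\hom_\CC(\calB, \calA)$ gives exactly $w \cdot \hom_{\CC^\op}(\calA, \calB) \subseteq \calX_i$, which is what $\calC \longrightarrow (\calB)^\calA_k$ in $\CC^\op$ demands.

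The two itemised consequences then follow by verifying the hypothesis: both $\MetNers$ and $\MetNers(S)$ contain $\calM_{n,\delta}$ for every $n$ and every admissible $\delta$ (any $\delta$ for the former, any $\delta \in S$ for the latter), since $\calM_{n,\delta}$ has spectrum $\{\delta\}$. I expect the only real subtlety --- the ``main obstacle'' --- to be bookkeeping rather than combinatorics: one must check carefully that with $\delta \ge \diam \calA, \diam \calB$ the three hom-sets line up as claimed (equality from the uniform source, mere inclusion for $\hom_\CC(\calB, \calA)$), and that the size $n$ demanded by the Finite Dual Ramsey Theorem can indeed be met by an object of $\CC$ --- which is precisely the content of the standing hypothesis, combined with the monotonicity of Lemma~\ref{dthms.lem.1}.
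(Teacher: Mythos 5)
Your proposal is correct and follows essentially the same route as the paper: choose a uniform space $\calM_{n,\delta}$ with $\delta$ at least the diameters involved (the paper takes $\delta = \diam\calB$, which already dominates $\diam\calA$ because of the given non-expansive surjection $\calB \to \calA$), observe that non-expansiveness is then automatic for maps out of $\calM_{n,\delta}$ so the relevant hom-sets reduce to rigid surjections of chains, and combine the Finite Dual Ramsey Theorem with Lemma~\ref{dthms.lem.1} to enlarge the witnessing chain to a size realised in $\Ob(\CC)$. Your write-up is if anything slightly more explicit than the paper's about the hom-set identifications and the degenerate one-point case.
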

\begin{proof}
  Take any $k \ge 2$, any $\calA = (A, d^\calA, \Boxed{\sqsubset^\calA})$ and $\calB  = (B, d^\calB, \Boxed{\sqsubset^\calB})$ in
  $\CC$ such that there is a non-expansive rigid surjection $\calB \to \calA$,
  and let $(C, \Boxed{\sqsubset^\calC})$ be a finite chain such that
  $(C, \Boxed{\sqsubset^\calC}) \longrightarrow (B, \Boxed{\sqsubset^\calB})^{(A, \Boxed{\sqsubset^\calA})}_k$ in $\CHrs^\op$.
  Such a chain exists because $\CHrs^\op$ has the Ramsey property (Example~\ref{cerp.ex.FDRT-ch}).
  Let
  $
    \delta = \max\{d^\calB(x, y) : x, y \in B\}
  $.
  By the assumption, there is an integer $n \ge |C|$ such that $\calM_{n,\delta} \in \Ob(\CC)$.

  Since $n \ge |C|$, there is a rigid surjection $(\{1, 2, \ldots, n\}, \Boxed<) \to (C, \Boxed{\sqsubset^\calC})$,
  so Lemma~\ref{dthms.lem.1} yields that
  $
    (\{1, 2, \ldots, n\}, \Boxed<) \longrightarrow (B, \Boxed{\sqsubset^\calB})^{(A, \Boxed{\sqsubset^\calA})}_k
  $
  in $\CHrs^\op$. Then it is easy to show that
  $
    \calM_{n,\delta} \longrightarrow (\calB)^\calA_k
  $
  in $\CC^\op$ because $f : \calM_{n,\delta} \to \calA$ is a non-expansive rigid surjection
  if and only if $f : (\{1, 2, \ldots, n\}, \Boxed{<}) \to (A, \Boxed{\sqsubset^\calA})$ is a rigid surjection.
\end{proof}

\paragraph{What is an acceptable kind of objects?}
Having in mind Corollary~\ref{dthms.cor.rigid},
a necessary requirement for a category to have the dual Ramsey property is that all of its objects be rigid.
In this paper we consider categories of finite linearly ordered relational structures, as adding linear orders
to finite structures turns out to be technically the easiest way of achieving rigidity. As we shall see
in an instant, the morphisms we will be working with will be surjective, so
all the structures in the paper will necessarily be reflexive.

\paragraph{What is an acceptable kind of morphisms?}
Embeddings have established themselves as the only kind of morphisms of interest
when considering ``direct'' Ramsey results in structural Ramsey theory.
For dual Ramsey results, though, there is no obvious notion that parallels in full the notion of embedding.
For example, fix a relational language $\Theta$ and consider a category $\CC$ whose objects
are some finite linearly ordered $\Theta$-structures $\calA = (A, \Theta^\calA, \Boxed{<^\calA})$
and morphisms are just rigid surjections $f : (A, \Boxed{<^\calA}) \to (B, \Boxed{<^\calB})$.
Then $\CC$ has the dual Ramsey property provided it contains arbitrarily large finite structures
(see Example~\ref{cerp.ex.FDRT-ch} and Lemma~\ref{dthms.lem.1}). This is clearly far from satisfactory.

Therefore, throughout the paper we require that each morphism
$f : (A, \Theta^\calA, \Boxed{<^\calA}) \to (B, \Theta^\calB, \Boxed{<^\calB})$ under consideration
be at least a surjective homomorphism $f : (A, \Theta^\calA) \to (B, \Theta^\calB)$ and at the same time
a rigid surjection $f : (A, \Boxed{<^\calA}) \to (B, \Boxed{<^\calB})$.
We treat this minimum set of requirements in Section~\ref{dthms.sec.homs-are-easy} where it turns out
that this setting, too, is unsatisfactory.

When it comes to surjective homomorphisms, quotient maps are usually seen as the more appropriate notion of structure maps.
Going back to our motivating example presented in Theorem~\ref{dthms.thm.met} let us note that
non-expansive rigid surjections between linearly ordered metric spaces are not only rigid homomorphisms,
but also special quotient maps (due to the special nature of metric spaces where each pair of points is in a relation).
Our main results shall be, therefore, spelled in the context where each morphism
$f : (A, \Theta^\calA, \Boxed{<^\calA}) \to (B, \Theta^\calB, \Boxed{<^\calB})$ under consideration
is a quotient map $f : (A, \Theta^\calA) \to (B, \Theta^\calB)$ and at the same time
a rigid surjection $f : (A, \Boxed{<^\calA}) \to (B, \Boxed{<^\calB})$.
As we shall see in Section~\ref{dthms.sec.qmaps} and later, this class of morphisms
turns out to be too rich for our methods, so we shall have to specialize further.
The question whether categories of linearly
ordered structures and all quotient maps which are at the same time rigid surjections have the
dual Ramsey property is an open problem.

\section{Homomorphisms are easy}
\label{dthms.sec.homs-are-easy}

In this section we prove dual Ramsey theorems for various categories of
structures and rigid homomorphisms.
A \emph{linearly ordered reflexive binary relational structure} is a triple $(A, \rho, \Boxed\sqsubset)$
where $\rho$ is a reflexive binary relation on $A$.
An \emph{empty linearly ordered reflexive binary relational structure on $n$ vertices} is a linearly ordered
reflexive binary relational structure
$$
  \calE_n = (\{1, 2, \ldots, n\}, \Delta_n, \Boxed<)
$$
where $<$ is the usual ordering of the integers and $\Delta_n = \{(i,i) : 1 \le i \le n\}$.

A mapping $f : A \to A'$ is a \emph{rigid surjective homomorphism} from $(A, \rho, \Boxed\sqsubset)$ to
$(A', \rho', \Boxed{\sqsubset'})$ if
\begin{itemize}
\item
  $f : (A, \rho) \to (A', \rho')$ is a homomorphism, that is, $(f(x), f(y)) \in \rho'$ whenever $(x, y) \in \rho$, and
\item
  $f : (A, \Boxed\sqsubset) \to (A', \Boxed{\sqsubset'})$ is a rigid surjection.
\end{itemize}
Let $\RBinRsh$ be the category whose objects are finite
linearly ordered reflexive binary relational structures
and morphisms are rigid surjective homomorphisms.

\begin{PROP}\label{dthms.prop.homs}
  Let $\CC$ be a full subcategory of $\RBinRsh$ such that $\Ob(\CC)$ contains an infinite subsequence of
  $\calE_1, \calE_2, \calE_3, \ldots, \calE_n, \ldots$.
  Then $\CC$ has the dual Ramsey property.
\end{PROP}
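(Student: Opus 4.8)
The plan is to reduce the statement to the Finite Dual Ramsey Theorem (Example~\ref{cerp.ex.FDRT-ch}) in exactly the manner of Theorem~\ref{dthms.thm.met}, the crucial point being that the empty structures $\calE_n$ are ``free'' for rigid surjective homomorphisms. First I would record this freeness observation: for every object $\calA = (A, \rho^\calA, \Boxed{\sqsubset^\calA})$ of $\CC$ and every $n$, a map $f : \{1, \ldots, n\} \to A$ is a rigid surjective homomorphism $\calE_n \to \calA$ if and only if it is a rigid surjection $(\{1,\ldots,n\}, \Boxed<) \to (A, \Boxed{\sqsubset^\calA})$ of the underlying chains. Indeed, the only instances of the homomorphism condition arising from $\calE_n$ involve the diagonal $\Delta_n$, and these hold automatically because every object of $\CC$ is reflexive. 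Hence $\hom_\CC(\calE_n, \calA) = \hom_{\CHrs}((\{1,\ldots,n\},\Boxed<),(A,\Boxed{\sqsubset^\calA}))$, and likewise with $\calB$ in place of $\calA$; these identifications are compatible with composition, since morphisms in both categories are underlying maps composed as functions.

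Next, fix $k \ge 2$ and $\calA, \calB \in \Ob(\CC)$ with $\hom_{\CC^\op}(\calA, \calB) = \hom_\CC(\calB, \calA) \ne \emptyset$. A rigid surjective homomorphism $\calB \to \calA$ yields in particular a rigid surjection of the underlying chains, so $\hom_{\CHrs}((B,\Boxed{\sqsubset^\calB}),(A,\Boxed{\sqsubset^\calA})) \ne \emptyset$. Since $\CHrs^\op$ has the Ramsey property, I would choose a finite chain $(C, \Boxed{\sqsubset^\calC})$ with $(C, \Boxed{\sqsubset^\calC}) \longrightarrow (B, \Boxed{\sqsubset^\calB})^{(A,\Boxed{\sqsubset^\calA})}_k$ in $\CHrs^\op$. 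By hypothesis there is an $\calE_n \in \Ob(\CC)$ with $n \ge |C|$, and since $n \ge |C|$ there is a rigid surjection $(\{1,\ldots,n\},\Boxed<) \to (C, \Boxed{\sqsubset^\calC})$; Lemma~\ref{dthms.lem.1}, applied in $\CHrs^\op$, then upgrades the arrow to $(\{1,\ldots,n\},\Boxed<) \longrightarrow (B, \Boxed{\sqsubset^\calB})^{(A,\Boxed{\sqsubset^\calA})}_k$ in $\CHrs^\op$.

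Finally I would claim $\calE_n \longrightarrow (\calB)^\calA_k$ in $\CC^\op$. Given a $k$-coloring of $\hom_{\CC^\op}(\calA, \calE_n) = \hom_\CC(\calE_n, \calA)$, the freeness observation reads it as a $k$-coloring of the rigid surjections $(\{1,\ldots,n\},\Boxed<) \to (A,\Boxed{\sqsubset^\calA})$. The $\CHrs^\op$-arrow then supplies a color $i$ and a rigid surjection $w : (\{1,\ldots,n\},\Boxed<) \to (B,\Boxed{\sqsubset^\calB})$ such that $f \circ w$ has color $i$ for every rigid surjection $f : (B,\Boxed{\sqsubset^\calB}) \to (A,\Boxed{\sqsubset^\calA})$. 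By freeness $w$ is a morphism $\calE_n \to \calB$ in $\CC$, and every rigid surjective homomorphism $\calB \to \calA$ is in particular one of the admissible $f$'s; hence $w$ is the required witness in $\CC^\op$. The only thing needing care is the verification that the homomorphism condition never imposes a constraint on maps out of $\calE_n$ (this is precisely where reflexivity is used) and the remark that the test family in $\CC^\op$, namely the rigid surjective homomorphisms $\calB \to \calA$, is merely a subset of all rigid surjections between the underlying chains, so passing from the chain statement to the structure statement only weakens the conclusion. There is no genuine obstacle beyond this bookkeeping, which is exactly the sense in which ``homomorphisms are easy''.
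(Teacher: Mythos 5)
Your proposal is correct and is exactly the paper's argument: the paper proves this proposition by saying it is analogous to Theorem~\ref{dthms.thm.met} with $\calE_n$ in place of $\calM_{n,\delta}$, and your write-up simply spells out that reduction, with the key ``freeness'' observation (the homomorphism condition out of $\calE_n$ is vacuous except on the diagonal, where reflexivity of the target handles it) playing the role that the non-expansiveness equivalence plays in the metric case. No gaps.
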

\begin{proof}
  The proof is analogous to the proof of Theorem~\ref{dthms.thm.met}: just use $\calE_n$ instead of $\calM_{n, \delta}$.
\end{proof}

\begin{EX}
  The following categories have the dual Ramsey property:
  \begin{itemize}
  \item
    the category whose objects are finite linearly ordered reflexive graphs and morphisms are rigid surjective homomorphisms
    (a \emph{linearly ordered reflexive graph} is a linearly ordered reflexive binary relational structure
    $(A, \rho, \Boxed\sqsubset)$ where $\rho$ is a symmetric binary relation);
  \item
    the category whose objects are finite linearly ordered reflexive oriented graphs and morphisms are rigid surjective homomorphisms
    (a \emph{linearly ordered reflexive oriented graph} is a linearly ordered reflexive binary relational structure
    $(A, \rho, \Boxed\sqsubset)$ where $\rho$ is an antisymmetric binary relation);
  \item
    the category whose objects are finite linearly ordered reflexive acyclic digraphs and morphisms are rigid surjective homomorphisms
    (a \emph{linearly ordered reflexive acyclic digraph} is a linearly ordered reflexive binary relational structure
    $(A, \rho, \Boxed\sqsubset)$ where $\rho$ has no cycles of length~$\ge$~2);
  \item
    the category whose objects are finite reflexive digraphs with linear extensions
    and morphisms are rigid surjective homomorphisms
    (a \emph{reflexive digraph with a linear extension} is a linearly ordered reflexive digraph
    $(A, \rho, \Boxed\sqsubset)$ such that $(x, y) \in \rho \Rightarrow x \sqsubset y$ whenever $x \ne y$);
  \item
    the category whose objects are finite linearly ordered posets and morphisms are rigid surjective homomorphisms
    (a \emph{linearly ordered poset} is a linearly ordered reflexive binary relational structure $(A, \rho, \Boxed\sqsubset)$
    where $\rho$ is an antisymmetric and transitive binary relation);
  \item
    the category whose objects are finite posets with linear extensions and morphisms are rigid surjective homomorphisms
    (a \emph{poset with a linear extension} is a linearly ordered reflexive binary relational structure $(A, \rho, \Boxed\sqsubset)$
    where $\rho$ is an antisymmetric and transitive binary relation and $(x, y) \in \rho \Rightarrow x \sqsubset y$
    whenever $x \ne y$).
  \end{itemize}
\end{EX}

Proposition~\ref{dthms.prop.homs} generalizes further to arbitrary finite relational structures.
An \emph{empty linearly ordered $\Theta$-structure on $n$ vertices} is a linearly ordered $\Theta$-structure
$$
  \calF_n = (\{1, 2, \ldots, n\}, \Theta^{\calF_n}, \Boxed<)
$$
where $\theta^{\calF_n} = \0$ for all $\theta \in \Theta$
and $<$ is the usual ordering of the integers.

A mapping $f : A \to B$ is a \emph{rigid surjective homomorphism} from $\calA = (A, \Theta^\calA, \Boxed{\sqsubset^\calA})$ to
$\calB = (B, \Theta^\calB, \Boxed{\sqsubset^\calB})$ if
\begin{itemize}
\item
  $f : (A, \Theta^\calA) \to (B, \Theta^\calB)$ is a homomorphism, and
\item
  $f : (A, \Boxed{\sqsubset^\calA}) \to (B, \Boxed{\sqsubset^\calB})$ is a rigid surjection.
\end{itemize}
Let $\REL_{rsh}(\Theta, \Boxed\sqsubset)$ be the category whose objects are finite
linearly ordered $\Theta$-structures and morphisms are rigid surjective homomorphisms.

\begin{PROP}
  Let $\Theta$ be a relational language and let $\Boxed\sqsubset \notin \Theta$ be a binary relational symbol.
  Let $\CC$ be a full subcategory of $\REL_{rsh}(\Theta, \Boxed\sqsubset)$ such that $\Ob(\CC)$ contains an infinite subsequence of
  $\calF_1, \calF_2, \calF_3, \ldots, \calF_n, \ldots$.
  Then $\CC$ has the dual Ramsey property. In particular, $\REL_{rsh}(\Theta, \Boxed\sqsubset)$ has the dual Ramsey
  property.
\end{PROP}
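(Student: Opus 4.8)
The plan is to follow the template already used for Theorem~\ref{dthms.thm.met} and Proposition~\ref{dthms.prop.homs}, replacing the ``rich'' empty object ($\calM_{n,\delta}$ or $\calE_n$) by the empty $\Theta$-structure $\calF_n$, and reducing everything to the dual Ramsey property of chains. Fix $k \ge 2$ and $\calA = (A, \Theta^\calA, \Boxed{\sqsubset^\calA})$, $\calB = (B, \Theta^\calB, \Boxed{\sqsubset^\calB})$ in $\CC$ with $\hom_\CC(\calB, \calA) \ne \0$ (so that the hypothesis of the Ramsey property in $\CC^\op$ is met). Since any rigid surjective homomorphism $\calB \to \calA$ is in particular a rigid surjection of the underlying chains, there is a morphism $(B, \Boxed{\sqsubset^\calB}) \to (A, \Boxed{\sqsubset^\calA})$ in $\CHrs$, so by Example~\ref{cerp.ex.FDRT-ch} there is a finite chain $(C, \Boxed{\sqsubset^\calC})$ with $(C, \Boxed{\sqsubset^\calC}) \longrightarrow (B, \Boxed{\sqsubset^\calB})^{(A, \Boxed{\sqsubset^\calA})}_k$ in $\CHrs^\op$. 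By hypothesis I may pick $n \ge |C|$ with $\calF_n \in \Ob(\CC)$, and since $n \ge |C|$ there is a rigid surjection $(\{1, \ldots, n\}, \Boxed<) \to (C, \Boxed{\sqsubset^\calC})$, so Lemma~\ref{dthms.lem.1} upgrades the arrow to $(\{1, \ldots, n\}, \Boxed<) \longrightarrow (B, \Boxed{\sqsubset^\calB})^{(A, \Boxed{\sqsubset^\calA})}_k$ in $\CHrs^\op$.

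The crucial observation, exactly as in the binary case, is that $\calF_n$ has all of its relations empty, so the homomorphism condition for any map out of $\calF_n$ is satisfied vacuously. Consequently the underlying-set map gives identifications $\hom_\CC(\calF_n, \calA) = \hom_{\CHrs}((\{1, \ldots, n\}, \Boxed<), (A, \Boxed{\sqsubset^\calA}))$ and $\hom_\CC(\calF_n, \calB) = \hom_{\CHrs}((\{1, \ldots, n\}, \Boxed<), (B, \Boxed{\sqsubset^\calB}))$: a map $\{1, \ldots, n\} \to A$ is a morphism $\calF_n \to \calA$ in $\CC$ if and only if it is a rigid surjection of chains. Meanwhile $\hom_\CC(\calB, \calA)$ is merely a \emph{subset} of $\hom_{\CHrs}((B, \Boxed{\sqsubset^\calB}), (A, \Boxed{\sqsubset^\calA}))$, namely those rigid surjections that additionally respect $\Theta$.

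It remains to transfer the chain arrow into $\CC^\op$. Given a $k$-coloring of $\hom_{\CC^\op}(\calA, \calF_n) = \hom_\CC(\calF_n, \calA)$, read it through the first identification as a $k$-coloring of the rigid surjections $(\{1, \ldots, n\}, \Boxed<) \to (A, \Boxed{\sqsubset^\calA})$. The chain arrow produces a witness $w \colon (\{1, \ldots, n\}, \Boxed<) \to (B, \Boxed{\sqsubset^\calB})$ and a color $i$ such that $h \cdot w$ has color $i$ for \emph{every} rigid surjection $h \colon (B, \Boxed{\sqsubset^\calB}) \to (A, \Boxed{\sqsubset^\calA})$; in particular this holds for every $h \in \hom_\CC(\calB, \calA)$. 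Since $\calF_n$ is empty, $w$ is a morphism $\calF_n \to \calB$ in $\CC$, and each composite $h \cdot w$ is a morphism $\calF_n \to \calA$ in $\CC$; thus $w$ witnesses $\calF_n \longrightarrow (\calB)^\calA_k$ in $\CC^\op$. The ``in particular'' claim is then immediate, since $\REL_{rsh}(\Theta, \Boxed\sqsubset)$ is a full subcategory of itself containing every $\calF_n$. The only point needing care --- and it is routine rather than hard --- is the bookkeeping in the two identifications of hom-sets and the observation that shrinking the universal quantifier over $h$ from all rigid surjections to the sub-collection of homomorphisms can only make the monochromaticity conclusion easier to satisfy.
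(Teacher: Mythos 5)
Your proposal is correct and is exactly the argument the paper intends: the paper's proof of this proposition is the one-line remark that it is analogous to Theorem~\ref{dthms.thm.met} with $\calF_n$ in place of $\calM_{n,\delta}$, and your write-up simply unpacks that template (chain arrow from Example~\ref{cerp.ex.FDRT-ch}, upgrade via Lemma~\ref{dthms.lem.1}, and the vacuous-homomorphism identification of $\hom_\CC(\calF_n,\calX)$ with rigid surjections of chains). No gaps.
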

\begin{proof}
  The proof is analogous to the proof of Theorem~\ref{dthms.thm.met}: just use $\calF_n$ instead of $\calM_{n, \delta}$.
\end{proof}

\section{Dual Ramsey theorems for structures and special quotient maps}
\label{dthms.sec.qmaps}

In this section we turn to the main goal of the paper, which is to prove dual Ramsey theorems for
various categories of structures and special quotient maps. We first prove
our main technical result (Theorem~\ref{dthms.thm.erst}),
and as a consequence derive dual Ramsey theorems for acyclic digraphs with
linear extensions, posets with linear extensions, linearly ordered uniform hypergraphs and linearly ordered graphs.

For an integer $r \ge 2$, a \emph{reflexive $r$-structure with a linear extension} (or \emph{$r$-erst} for short) is a
linearly ordered reflexive structure $\calA = (A, \rho, \Boxed{\sqsubset})$ such that
$\sqsubset$ is a \emph{linear extension} of $\rho$ in the sense that
$$
  \text{if } (a_1, a_2, \ldots, a_r) \in \rho  \setminus \Delta_{A, r} \text{ then }
  a_1 \sqsubset a_2 \sqsubset \ldots \sqsubset a_r,
$$
where $r = \arity(\rho)$.

\begin{DEF}
  Let $\calA = (A, \rho, \Boxed{\sqsubset})$ and $\calA' = (A', \rho', \Boxed{\sqsubset'})$ be
  two linearly ordered relational structures where $\arity(\rho) = \arity(\rho') = r$.
  Then each homomorphism $f : (A, \rho) \to (A', \rho')$
  induces a mapping $\hat f : \rho \to \rho'$ by:
  $
    \hat f(a_1, a_2, \ldots, a_r) = (f(a_1), f(a_2), \ldots, f(a_r))
  $.
  A homomorphism $f : (A, \rho) \to (A', \rho')$ is a \emph{strong rigid quotient map}
  from $\calA$ to $\calA'$ if
  $\hat f : (\rho, \Boxed\sqsal) \to (\rho', \Boxed\sqprimesal)$ is a rigid surjection.
\end{DEF}

The following lemma justifies the name for these morphisms: it shows that
a strong rigid quotient map is a rigid surjection and a quotient map. The converse is not true, see
Fig.~\ref{dthms.fig.srqm}.

\begin{figure}
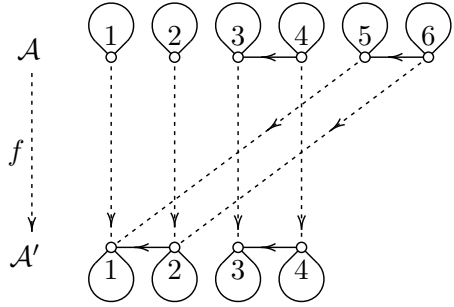

  \centering
  \input srqm3.pgf
  \caption{A rigid surjection and a quotient map which is not strong}
  \label{dthms.fig.srqm}
\end{figure}

\begin{LEM}\label{dthms.lem.strong}
  Let $\calA = (A, \rho, \Boxed\sqsubset)$ and $\calA' = (A', \rho', \Boxed{\sqsubset'})$ be
  two linearly ordered reflexive relational structures where $\arity(\rho) = \arity(\rho') = r \ge 2$,
  and let $f : (A, \rho) \to (A', \rho')$ be a homomorphism.

  $(a)$ For any $u \in A'$, if $\hat f^{-1}(u, u, \ldots, u) \ne \0$ then
  $\min \hat f^{-1}(u, u, \ldots, u) = (x, x, \ldots, x)$, where $x = \min f^{-1}(u)$.

  $(b)$ Assume that $\hat f : (\rho, \Boxed\sqsal) \to (\rho', \Boxed{\sqprimesal})$ is a rigid surjection.
  Then $f$ is a rigid surjection $(A, \Boxed\sqsubset) \to (A', \Boxed{\sqsubset'})$
  and a quotient map $(A, \rho) \to (A', \rho')$.
\end{LEM}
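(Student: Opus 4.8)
The plan is to handle the two parts in order, using throughout the explicit description of $\sqsal$ in terms of types and value sets, and then to feed part $(a)$ into part $(b)$.

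For part $(a)$, I would first record that reflexivity of $\calA$ gives $(x, x, \ldots, x) \in \rho$ for every $x \in A$; in particular, under the hypothesis $\hat f^{-1}(u, u, \ldots, u) \ne \0$ we have $f^{-1}(u) \ne \0$, so $x = \min f^{-1}(u)$ exists and $(x, x, \ldots, x)$ is a genuine element of $\hat f^{-1}(u, u, \ldots, u)$. Next I would split the preimage into constant and non-constant tuples. Every constant tuple has type equal to the full quasiorder on $\{1, \ldots, r\}$, i.e.\ a single $\equiv$-class, whereas any non-constant tuple has at least two classes; since $\triangleleft$ orders types first by the number of classes, the single-class type is $\triangleleft$-minimal, so by the first clause in the definition of $\sqsal$ every constant tuple is $\sqsal$-below every non-constant one. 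It therefore suffices to minimize over the constant tuples $(x, \ldots, x)$ with $f(x) = u$. For two such tuples the types coincide, so the second clause applies and the comparison collapses to $\{x\} \sqalex \{x'\}$, which for singletons is just $x \sqsubset x'$. Hence the $\sqsal$-minimum is attained at $x = \min f^{-1}(u)$, yielding $\min \hat f^{-1}(u, \ldots, u) = (x, \ldots, x)$.

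For part $(b)$, I would extract surjectivity and the quotient condition directly from the surjectivity of $\hat f$ (which is part of its being a rigid surjection). Indeed, for any $u \in A'$ reflexivity of $\calA'$ gives $(u, \ldots, u) \in \rho'$, and surjectivity of $\hat f$ furnishes a preimage, so $u$ lies in the image of $f$; thus $f$ is onto. The quotient requirement, that every $(b_1, \ldots, b_r) \in \rho'$ lift to some $(a_1, \ldots, a_r) \in \rho$ with $f(a_i) = b_i$, is verbatim the surjectivity of $\hat f$ onto $\rho'$, so $f$ is a quotient map. For rigidity of $f$, take $u \sqsubset' v$ in $A'$; the diagonal tuples $(u, \ldots, u)$ and $(v, \ldots, v)$ share the single-class type, so by the second clause of $\sqsal$ their order is governed by $\{u\} \sqprimealex \{v\}$, i.e.\ by $u \sqsubset' v$, whence $(u, \ldots, u) \sqprimesal (v, \ldots, v)$. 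Rigidity of $\hat f$ then gives $\min \hat f^{-1}(u, \ldots, u) \sqsal \min \hat f^{-1}(v, \ldots, v)$, and applying part $(a)$ to both sides and again reducing to the singleton case delivers $\min f^{-1}(u) \sqsubset \min f^{-1}(v)$. This is exactly the defining inequality for $f$ to be a rigid surjection.

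The step requiring the most care is the interplay between the two clauses of the definition of $\sqsal$ in part $(a)$: one must verify that the number-of-classes criterion built into $\triangleleft$ genuinely forces every non-constant tuple above every constant tuple, and that the residual comparison among constant tuples reduces to the singleton case of $\sqalex$. Once this bookkeeping is settled, part $(b)$ is essentially a transcription, since both the quotient property and rigidity fall out of the surjectivity and rigidity of $\hat f$ by examining the diagonal tuples and invoking part $(a)$.
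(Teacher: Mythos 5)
Your proposal is correct and follows essentially the same route as the paper: part $(a)$ rests on the observation that constant tuples have the $\triangleleft$-least type and so precede all non-constant tuples in $\sqsal$, with the residual comparison among constant tuples reducing to $\sqsubset$ via singleton value sets, and part $(b)$ extracts surjectivity and the quotient property from the surjectivity of $\hat f$ on the reflexive diagonal and deduces rigidity of $f$ from rigidity of $\hat f$ applied to diagonal tuples together with part $(a)$. The only difference is presentational -- you minimize directly over the partitioned preimage where the paper argues by contradiction -- and this does not change the substance.
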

\begin{proof}
  $(a)$
  Assume that $\hat f^{-1}(u, u, \ldots, u) \ne \0$ and let
  $$
    \min \hat f^{-1}(u, u, \ldots, u) = (x_1, x_2, \ldots, x_r)
  $$
  where
  $|\{x_1, x_2, \ldots, x_r\}| \ge 2$. Then $\hat f(x_1, x_2, \ldots, x_r) = (u, u, \ldots, u)$.
  So, $f(x_1) = u$, whence $\hat f(x_1, x_1, \ldots, x_1) = (u, u, \ldots, u)$.
  But $(x_1, x_1, \ldots, x_1) \sqsal (x_1, x_2, \ldots, x_r)$ because
  $\tp(x_1, x_1, \ldots, x_1) \triangleleft \tp(x_1, x_2, \ldots, x_r)$.
  This contradicts the fact that $\min \hat f^{-1}(u, u, \ldots, u) = (x_1, x_2, \ldots, x_r)$.

  Thus, $\min \hat f^{-1}(u, u, \ldots, u) = (x, x, \ldots, x)$ for some $x \in A$.
  Then $f(x) = u$ whence $\min f^{-1}(u) \sqsubseteq x$.
  Assume that $\min f^{-1}(u) = t \sqsubset x$. Then $(t, t, \ldots, t) \sqsal (x, x, \ldots, x) =
  \min \hat f^{-1}(u, u, \ldots, u)$, which contradicts the fact that $(t, t, \ldots, t) \in
  \hat f^{-1}(u, u, \ldots, u)$. Therefore, $\min f^{-1}(u) = x$.

  $(b)$
  Let us start by showing that $f$ is surjective. Take any $u \in A'$. Then $(u, u, \ldots, u) \in \rho'$
  because $\rho'$ is reflexive,
  so there is an $(x_1, x_2, \ldots, x_r) \in \rho$ such that $\hat f(x_1, x_2, \ldots, x_r) =
  (u, u, \ldots, u)$ because $\hat f$ is surjective. But then $f(x_1) = u$.

  Since $f$ is a homomorphism and $\hat f : \rho \to \rho'$ is a surjective map it immediately follows that
  $f$ is a quotient map.
  
  Finally, let us prove that $f$ is a rigid surjection. Take any $u, v \in A'$ such that $u \mathrel{\sqsubset'} v$
  and let us show that $\min f^{-1}(u) \sqsubset \min f^{-1}(v)$. From $u \mathrel{\sqsubset'} v$
  it follows that $(u, u, \ldots, u) \mathrel{\sqprimesal} (v, v, \ldots, v)$, whence
  $\min \hat f^{-1}(u, u, \ldots, u) \sqsal \min \hat f^{-1}(v, v, \ldots, v)$ because $\hat f$ is a rigid surjection.
  The conclusion now follows from~$(a)$.
\end{proof}

For $r \ge 2$, let $\RslSrqm(r)$ be the category whose objects are finite $r$-erst's
and whose morphisms are strong rigid quotient maps. Our goal in this section is to prove that
$\RslSrqm(r)$ has the dual Ramsey property for every $r \ge 2$. In order to do so, we shall use the main idea
of~\cite{masul-preadj}. A pair of \emph{maps}
$$
    F : \Ob(\DD) \rightleftarrows \Ob(\CC) : G
$$
is a \emph{pre-adjunction between the categories $\CC$ and $\DD$} provided there is a family of maps
$$
    \Phi_{Y,X} : \hom_\CC(F(Y), X) \to \hom_\DD(Y, G(X))
$$
indexed by the family $\{(Y, X) \in \Ob(\DD) \times \Ob(\CC) : \hom_\CC(F(Y), X) \ne \0\}$
and satisfying the following:
\begin{itemize}
\item[(PA)]
  for every $C \in \Ob(\CC)$, every $D, E \in \Ob(\DD)$,
  every $u \in \hom_\CC(F(D), C)$ and every $f \in \hom_\DD(E, D)$ there is a $v \in \hom_\CC(F(E), F(D))$
  satisfying $\Phi_{D, C}(u) \cdot f = \Phi_{E, C}(u \cdot v)$.
  $$
    \XYMATRIX{
      F(D) \ar[rr]^u                     & & C & & D \ar[rr]^{\Phi_{D, C}(u)}                    & & G(C) \\
      F(E) \ar[u]^v \ar[urr]_{u \cdot v} & &       & & E \ar[u]^f \ar[urr]_{\Phi_{E, C}(u \cdot v)}
    }
  $$
\end{itemize}
(Note that in a pre-adjunction $F$ and $G$ are \emph{not} required to be functors, just maps from the class of objects of one of the two
categories into the class of objects of the other category; also $\Phi$ is just a family of
maps between hom-sets satisfying the requirement above.)

\begin{THM} \cite{masul-preadj} \label{opos.thm.main}
  Let $\CC$ and $\DD$ be categories such that $\CC$ has the Ramsey property and
  there is a pre-adjunction $F : \Ob(\DD) \rightleftarrows \Ob(\CC) : G$.
  Then $\DD$ has the Ramsey property.
\end{THM}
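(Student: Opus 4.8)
The plan is to transport a Ramsey witness from $\CC$ to $\DD$ along the object map $G$ and the hom-set maps $\Phi$. Fix $k \ge 2$ and $A, B \in \Ob(\DD)$ with $\hom_\DD(A, B) \ne \0$; I must produce a $C \in \Ob(\DD)$ with $C \longrightarrow (B)^A_k$. The natural candidate is $C = G(C')$, where $C' \in \Ob(\CC)$ is a Ramsey object for the pair $(F(A), F(B))$ supplied by the assumption that $\CC$ has the Ramsey property. Once $C'$ and $C$ are chosen, the map $\Phi_{A, C'}$ will be used to pull a $k$-coloring of $\hom_\DD(A, C)$ back to a $k$-coloring of $\hom_\CC(F(A), C')$, and the map $\Phi_{B, C'}$ will be used to push the resulting monochromatic morphism $w' \in \hom_\CC(F(B), C')$ forward to the desired $w \in \hom_\DD(B, C)$.

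Before invoking the Ramsey property of $\CC$ I must first check that the pair $(F(A), F(B))$ is within its scope, i.e.\ that $\hom_\CC(F(A), F(B)) \ne \0$; this is the only point requiring a small bootstrapping argument. I would obtain it by applying (PA) with $D = B$, $E = A$, the object $F(B)$ in the role of the codomain, $u = \id_{F(B)}$, and an arbitrary $f \in \hom_\DD(A, B)$: the conclusion of (PA) supplies a $v \in \hom_\CC(F(A), F(B))$, so this hom-set is nonempty. Now let $C' \in \Ob(\CC)$ satisfy $C' \longrightarrow (F(B))^{F(A)}_k$ and put $C = G(C')$. The Ramsey property delivers in particular a morphism $F(B) \to C'$, so $\hom_\CC(F(B), C') \ne \0$, and composing it with $v$ shows $\hom_\CC(F(A), C') \ne \0$ as well; hence both $\Phi_{A, C'}$ and $\Phi_{B, C'}$ are defined.

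The heart of the argument is then the coloring transport. Given a $k$-coloring $\chi$ of $\hom_\DD(A, C) = \hom_\DD(A, G(C'))$, define a $k$-coloring $\chi'$ of $\hom_\CC(F(A), C')$ by $\chi'(p) = \chi(\Phi_{A, C'}(p))$. By the choice of $C'$ there are a color $i$ and a morphism $w' \in \hom_\CC(F(B), C')$ with $w' \cdot \hom_\CC(F(A), F(B)) \subseteq (\chi')^{-1}(i)$. I then set $w = \Phi_{B, C'}(w') \in \hom_\DD(B, G(C')) = \hom_\DD(B, C)$ and claim $w \cdot \hom_\DD(A, B) \subseteq \chi^{-1}(i)$. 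To verify this, take any $f \in \hom_\DD(A, B)$ and apply (PA) with $D = B$, $E = A$, codomain $C'$ and $u = w'$: it produces a $v \in \hom_\CC(F(A), F(B))$ with $\Phi_{B, C'}(w') \cdot f = \Phi_{A, C'}(w' \cdot v)$, that is, $w \cdot f = \Phi_{A, C'}(w' \cdot v)$. Since $w' \cdot v \in w' \cdot \hom_\CC(F(A), F(B)) \subseteq (\chi')^{-1}(i)$, the definition of $\chi'$ gives $\chi(w \cdot f) = \chi'(w' \cdot v) = i$, which is exactly what is needed.

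The main (indeed, essentially the only) substantive step is the verification in the last paragraph: extracting the identity $w \cdot f = \Phi_{A, C'}(w' \cdot v)$ from (PA) and matching the color through the definition of $\chi'$. Everything else is bookkeeping to ensure that the relevant hom-sets are nonempty so that $\Phi$ and the Ramsey property of $\CC$ are applicable. I expect no difficulty beyond keeping the directions of the maps $F$, $G$, $\Phi$ straight, and I note that no properties of $F$ and $G$ other than (PA) are used; in particular they are never treated as functors.
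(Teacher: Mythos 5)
Your proof is correct, and it is essentially the standard argument for this result (the paper itself only cites Theorem~\ref{opos.thm.main} from~\cite{masul-preadj} without reproducing a proof): transport the coloring along $\Phi_{A,C'}$, extract the monochromatic $w'$ in $\CC$, and push it to $w=\Phi_{B,C'}(w')$ using (PA) to match colors. The bootstrapping step showing $\hom_\CC(F(A),F(B))\ne\0$ via (PA) applied to $u=\id_{F(B)}$ is exactly the right way to ensure the Ramsey property of $\CC$ is applicable, and the rest of the verification is sound.
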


For a finite chain $\calL = \{\ell_1 \sqsubset \ell_2 \sqsubset \ldots \sqsubset \ell_N\}$ let
$$
  r \otimes \calL = (\{1, 2, \ldots, r\} \times \{\ell_1, \ell_2, \ldots, \ell_N\}, \rho_{r \otimes \calL}, \Boxed{\prec})
$$
denote an $r$-erst on $r \cdot N$ elements (written $i\ell$ instead of $(i, \ell)$) where
$$
  \rho_{r \otimes \calL} = \Delta_{\{1, 2, \ldots, r\} \times \{\ell_1, \ell_2, \ldots, \ell_N\}, r} \union
  \big\{ (1\ell, 2\ell, \ldots, r\ell) : \ell \in \calL\},
$$
and $\prec$ is the anti-lexicographic ordering of $\{1, 2, \ldots, r\} \times \{\ell_1, \ell_2, \ldots, \ell_N\}$ induced by the
respective linear orderings: $ik \prec j\ell$ iff $k \sqsubset \ell$, or $k = \ell$ and $i < j$.

\begin{THM}\label{dthms.thm.erst}
  Let $r \ge 2$ be an integer. Let $\CC$ be a full subcategory of $\RslSrqm(r)$ such that for every finite
  chain $\calL$ there is an object in $\Ob(\CC)$ isomorphic to~$r \otimes \calL$.
  Then $\CC$ has the dual Ramsey property. In particular, $\RslSrqm(r)$ has the dual Ramsey property.
\end{THM}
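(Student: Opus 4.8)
The plan is to prove Theorem~\ref{dthms.thm.erst} by exhibiting a pre-adjunction between $\CHrs$ (finite chains with rigid surjections) and the opposite categories, so that the dual Ramsey property of $\CHrs$ (equivalently, the Ramsey property of $\CHrs^\op$, Example~\ref{cerp.ex.FDRT-ch}) can be transported to $\CC$ via Theorem~\ref{opos.thm.main}. Since we want the \emph{dual} Ramsey property of $\CC$, we apply Theorem~\ref{opos.thm.main} with the Ramsey-having category being $\CHrs^\op$ and the target category being $\CC^\op$. Concretely, I would set $\DD = \CC^\op$ and $\CC_{\text{Thm}} = \CHrs^\op$, and construct object-maps
$$
  F : \Ob(\CC^\op) \rightleftarrows \Ob(\CHrs^\op) : G
$$
together with the family $\Phi$. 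The natural choice for $G$ sends a finite chain $\calL$ to (an object isomorphic to) $r \otimes \calL$, which lies in $\Ob(\CC)$ by the hypothesis of the theorem; the map $F$ would send an $r$-erst $\calA = (A, \rho, \Boxed\sqsubset)$ to the chain $(\rho, \Boxed\sqsal)$, i.e. the relation $\rho$ linearly ordered by the special anti-lexicographic order. Because all diagrams must be read in the opposite categories, what looks like a morphism $F(D) \to C$ in the statement of (PA) is really a morphism $C \to F(D)$ in $\CHrs$ and $\CC$, so all the rigid-surjection data flows in the ``forward'' (surjective) direction, which is exactly where our hypotheses live.

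The heart of the construction is the family $\Phi_{\calL, \calA}$, which must convert a rigid surjection (in $\CHrs$) between the chain $\calL$ and the chain $(\rho, \Boxed\sqsal)$ attached to $\calA$ into a strong rigid quotient map (in $\CC$) between $r \otimes \calL = G(\calL)$ and $\calA$. The key structural fact I would exploit is the correspondence, developed in Lemma~\ref{dthms.lem.AUX}, between a map $f$ on the underlying set and the induced map $\hat f$ on the $r$-ary relation, together with parts $(a)$--$(b)$ of Lemma~\ref{dthms.lem.strong} which pin down how minima of fibres of $\hat f$ behave under $\sqsal$. The object $r \otimes \calL$ is engineered precisely so that its relation $\rho_{r \otimes \calL}$, ordered by $\sqsal$, is (up to the diagonal tuples) a faithful copy of the chain $\calL$: the non-diagonal tuples $(1\ell, 2\ell, \ldots, r\ell)$ are indexed by $\ell \in \calL$ and their $\sqsal$-order matches $\sqsubset$ on $\calL$, while the diagonal tuples sit below them by type. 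Thus a rigid surjection of chains involving $\calL$ can be read off the non-diagonal part and lifted to a genuine map on the vertex set $\{1,\ldots,r\}\times\calL$, and Lemma~\ref{dthms.lem.AUX} guarantees that the induced map on the relation is again a rigid surjection with respect to $\sqsal$, i.e. a strong rigid quotient map.

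I would then verify the pre-adjunction coherence condition (PA): given $u \in \hom_{\CHrs^\op}(F(\calD), \calL)$ and $f \in \hom_{\CC^\op}(\calE, \calD)$, produce $v$ so that $\Phi_{\calD, \calL}(u)\cdot f = \Phi_{\calE, \calL}(u \cdot v)$. Unwinding the opposite-category conventions, this amounts to checking that applying $\hat{(\cdot)}$ and taking $\sqsal$-minima of fibres commutes with composition of the underlying rigid surjections, which is exactly the content bundled into the final displayed equality of Lemma~\ref{dthms.lem.AUX}, namely $\min(\restr{\hat f}{\rho}^{-1}(\mat(\overline p))) = \mat(\min \restr{\hat f}{\theta}^{-1}(\overline p))$. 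Once (PA) is established, Theorem~\ref{opos.thm.main} immediately yields that $\CC^\op$ has the Ramsey property, i.e. $\CC$ has the dual Ramsey property; the ``in particular'' clause for $\RslSrqm(r)$ follows since it is a full subcategory of itself containing every $r \otimes \calL$.

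The main obstacle I anticipate is purely bookkeeping in nature but genuinely delicate: correctly reconciling the two levels of duality (the opposite categories) with the two levels of structure (vertices versus $r$-tuples). In particular, one must be careful that $\Phi$ is well-defined, i.e. that the lift of a rigid surjection on chains is actually a \emph{homomorphism} $(A,\rho)\to(A',\rho')$ and a strong rigid quotient map, not merely a rigid surjection of underlying chains; this is where reflexivity of the structures and the precise design of $\rho_{r\otimes\calL}$ (including the diagonal tuples) must be used, via Lemma~\ref{dthms.lem.strong}$(a)$, to ensure that fibre-minima over diagonal targets are themselves diagonal and that the linear-extension condition is preserved. Verifying that $\hat f$ restricted to $\rho$ is surjective and rigid for $\sqsal$ — rather than for some other ordering — is the technical crux, and it is exactly what Lemmas~\ref{dthm.lem.tp-mat-facts}, \ref{dthm.lem.tp-mat-facts-2} and~\ref{dthms.lem.AUX} were proved in order to supply.
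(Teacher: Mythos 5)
Your overall architecture is exactly the paper's: the pre-adjunction $F : \Ob(\CC^\op) \rightleftarrows \Ob(\CHrs^\op) : G$ with $F(\calA) = (\rho_\calA, \Boxed\lsal)$ and $G(\calL) = r \otimes \calL$, fed into Theorem~\ref{opos.thm.main} together with the Finite Dual Ramsey Theorem, is precisely how the paper proceeds, and your unwinding of the opposite-category conventions is correct. The gap lies in how you propose to discharge what you yourself flag as the technical crux. The map $\Phi_{\calA,\calL}(u)$ is (implicitly in your sketch, explicitly in the paper) $\phi_u(is) = \pi_i(u(s))$, and one must verify that $\hat\phi_u : (\rho_{r \otimes \calL}, \Boxed\precsal) \to (\rho_\calA, \Boxed\lsal)$ is a rigid surjection. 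You delegate this to Lemma~\ref{dthms.lem.AUX} and to Lemma~\ref{dthms.lem.strong}, but neither applies. Lemma~\ref{dthms.lem.AUX} compares a relation $\theta$ of arity $n$ with the arity-$r$ relation of matrices of its tuples of a fixed type $\sigma$; it is the tool for the Dual Ne\v set\v ril-R\"odl Theorem in Section~\ref{dthms.sec.DNRT}, where arities genuinely get compressed, whereas here both $\rho_{r \otimes \calL}$ and $\rho_\calA$ already have arity $r$ and no $\mat$/$\tup$ translation occurs. Lemma~\ref{dthms.lem.strong}~$(b)$ runs in the wrong direction: it \emph{assumes} $\hat f$ is a rigid surjection and deduces properties of $f$, while you need to \emph{establish} that $\hat\phi_u$ is one. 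What is actually required is a direct computation of $\min \hat\phi_u^{-1}(e_k)$ for each $e_k \in \rho_\calA$, split according to whether $e_k$ is diagonal or has strictly increasing entries (the paper's Claim~1): with $t = \min u^{-1}(e_k)$ the minimum is $(1t, 1t, \ldots, 1t)$ in the first case and $(1t, 2t, \ldots, rt)$ in the second, and the comparison of the two cases uses the ordering of types by $\triangleleft$ to see that diagonal preimages precede non-diagonal ones. This case analysis is short but it is the substance of the proof, and it is not supplied by the lemmas you cite.

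A smaller misattribution: the verification of (PA) does not require any statement about minima of fibres commuting with composition. One simply takes $v = \hat f$ and checks $f \circ \phi_u = \phi_{\hat f \circ u}$ pointwise, which reduces to the identity $f \circ \pi_i = \pi_i \circ \hat f$; the fact that $v = \hat f$ is a legitimate morphism of $\CHrs$ is immediate from the definition of strong rigid quotient maps. With Claim~1 supplied and (PA) verified this way, the rest of your plan goes through as stated.
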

\begin{proof}
  Without loss of generality we may assume that $r \otimes \calL \in \Ob(\CC)$ for every finite chain~$\calL$.
  In order to prove the theorem we are going to show that there is a pre-adjunction
  $$
    F : \Ob(\CC^\op) \rightleftarrows \Ob(\CHrs^\op) : G.
  $$
  The result then follows from Theorem~\ref{opos.thm.main} and the fact that the category
  $\CHrs^\op$ has the Ramsey property (Example~\ref{cerp.ex.FDRT-ch}). Explicitly, unpacking the definition
  of pre-adjunction in case of opposite categories, we have to show the following:
  \begin{itemize}
  \item[(PA)]
    for every finite chain $\calL \in \Ob(\CHrs)$,
    all $\calA, \calB \in \Ob(\CC)$,
    for every $u \in \hom_{\CHrs}(\calL, F(\calA))$
    and every $f \in \hom_{\CC}(\calA, \calB)$
    there is a $v \in \hom_{\CHrs}(F(\calA), F(\calB))$
    satisfying $f \circ \Phi_{\calA, \calL}(u) = \Phi_{\calB, \calL}(v \circ u)$.
    $$
      \XYMATRIX{
        F(\calA) \ar@{<-}[rr]^u                          & & \calL & & \calA \ar@{<-}[rr]^{\Phi_{\calA, \calL}(u)}                    & & G(\calL) \\
        F(\calB) \ar@{<-}[u]^v \ar@{<-}[urr]_{v \circ u} & &       & & \calB \ar@{<-}[u]^f \ar@{<-}[urr]_{\Phi_{\calB, \calL}(v \circ u)}
      }
    $$
  \end{itemize}

  Take a finite chain $\calL$ and a finite $r$-erst $\calA$.
  Without loss of generality we can assume that $\calL = \{1 < 2 < \ldots < N\}$
  and $\calA = (\{1, 2, \ldots, n\}, \rho_\calA, \Boxed<)$,
  where $<$ is the usual ordering of the integers and $\rho_\calA = \{e_1 \lsal e_2 \lsal \ldots \lsal e_{q(\calA)}\}$.
  For each $i$ let $e_i = (p^1_i, p^2_i, \ldots, p^r_i)$, where, as stipulated by the definition of $r$-erst,
  either $p^1_i = p^2_i = \ldots = p^r_i$ or $p^1_i < p^2_i < \ldots < p^r_i$.
  
  Define $F$ and $G$ by $F(\calA) = (\rho_\calA, \Boxed\lsal)$ and $G(\calL) = r \otimes \calL$.
  Next, let us define
  $
    \Phi_{\calA, \calL} : \hom_{\CHrs}(\calL, F(\calA)) \to \hom_{\CC}(G(\calL), \calA)
  $.
  For a rigid surjection
  \begin{equation}\label{dthms.eq.u}
    u : \{1 < 2 < \ldots < N\} \to \{e_1 \lsal e_2 \lsal \ldots \lsal e_{q(\calA)}\}
  \end{equation}
  define $\phi_u : r \otimes \calL \to \calA$ by $\phi_u(is) = \pi_i \circ u(s)$ and put $\Phi_{\calA, \calL}(u) = \phi_u$.
  Here, $\pi_i$ stands for the $i$th projection: $\pi_i(x_1, x_2, \ldots, x_r) = x_i$.

  To show that the definition of $\Phi$ is correct we have to show that for every rigid surjection
  $u$ as in~\eqref{dthms.eq.u} the mapping  $\phi_u$
  is a strong rigid quotient map $r \otimes \calL \to \calA$.

  Let us first show that $\phi_u : (\{1, 2, \ldots, r\} \times \{1, 2, \ldots, N\}, \rho_{r \otimes \calL}) \to (A, \rho_\calA)$
  is a homomorphism. Take any $(x_1, x_2, \ldots, x_r) \in \rho_{r \otimes \calL}$. The case
  $x_1 = \ldots = x_r$ is trivial, so let us consider the case where $(x_1, x_2, \ldots, x_r) = (1s, 2s, \ldots, rs)$
  for some $s \in \calL$:
  \begin{align*}
    (\phi_u(x_1), \phi_u(x_2), \ldots, \phi_u(x_r))
      &= (\phi_u(1s), \phi_u(2s), \ldots, \phi_u(rs))\\
      &= (\pi_1 \circ u(s), \pi_2 \circ u(s), \ldots, \pi_r \circ u(s))\\
      &= u(s) \in \rho_\calA.
  \end{align*}

  Let us now show that $\hat\phi_u : (\rho_{r \otimes \calL}, \Boxed\precsal) \to (\rho_\calA, \Boxed\lsal)$
  is a rigid surjection. Note that
  $
    \hat \phi_u\big((i_1s, i_2s, \ldots, i_rs)\big) = (\pi_{i_1} \circ u(s), \pi_{i_2} \circ u(s), \ldots, \pi_{i_r} \circ u(s)).
  $
  
  \medskip

  Claim~1. Take any $e_k = (p^1_k, p^2_k, \ldots, p^r_k) \in \rho_\calA$, $1 \le k \le F(\calA)$, and let
  $t = \min u^{-1}(e_k)$. Then the following holds:
  \begin{itemize}
  \item[]
    $1^\circ$ if $p^1_k < p^2_k < \ldots < p^r_k$ then $\min \hat \phi_u^{-1}(e_k) = (1t, 2t, \ldots, rt)$;
  \item[]
    $2^\circ$ if $p^1_k = p^2_k = \ldots = p^r_k$ then $\min \hat \phi_u^{-1}(e_k) = (1t, 1t, \ldots, 1t)$.
  \end{itemize}

  \medskip

  Proof of $1^\circ$. Assume that $p^1_k < p^2_k < \ldots < p^r_k$. From
  $$
    \hat \phi_u\big((1t, 2t, \ldots, rt)\big) = u(t) = e_k
  $$
  it follows that $(1t, 2t, \ldots, rt) \in \hat \phi_u^{-1}(e_k)$, so it suffices to show that
  $$
    (1t, 2t, \ldots, rt) \precsaleq (i_1s, i_2s, \ldots, i_rs)
  $$
  for every $(i_1s, i_2s, \ldots, i_rs) \in \hat \phi_u^{-1}(e_k)$.
  Take any $(i_1s, i_2s, \ldots, i_rs) \in \rho_{r \otimes \calL}$ such that
  $\hat \phi_u\big((i_1s, i_2s, \ldots, i_rs)\big)  = e_k$. Then
  \begin{equation}\label{dthms.eq.1}
    (\pi_{i_1} \circ u(s), \pi_{i_2} \circ u(s), \ldots, \pi_{i_r} \circ u(s)) = (p^1_k, p^2_k, \ldots, p^r_k).
  \end{equation}
  Let $u(s) = e_m$. Since $p^1_k < p^2_k < \ldots < p^r_k$, the only possibility to achieve~\eqref{dthms.eq.1}
  is $i_1 = 1$, $i_2 = 2$, \ldots, $i_r = r$ and $m = k$.
  Therefore, $u(s) = e_k$ whence $s \in u^{-1}(e_k)$, so $t \le s$. Now
  $$
    (1t, 2t, \ldots, rt) \precsaleq (1s, 2s, \ldots, rs) = (i_1s, i_2s, \ldots, i_rs).
  $$
  
  \medskip

  Proof of $2^\circ$. Assume that $p^1_k = p^2_k = \ldots = p^r_k$. From
  $$
    \hat \phi_u\big((1t, 1t, \ldots, 1t)\big) = (p^1_k, p^1_k, \ldots, p^1_k) = e_k
  $$
  it follows that $(1t, 1t, \ldots, 1t) \in \hat \phi_u^{-1}(e_k)$, so it suffices to show that
  $$
    (1t, 1t, \ldots, 1t) \precsaleq (i_1s, i_2s, \ldots, i_rs)
  $$
  for every $(i_1s, i_2s, \ldots, i_rs) \in \hat \phi_u^{-1}(e_k)$.
  Take any $(i_1s, i_2s, \ldots, i_rs) \in \rho_{r \otimes \calL}$ such that
  $\hat \phi_u\big((i_1s, i_2s, \ldots, i_rs)\big)  = e_k$. Then
  \begin{equation}\label{dthms.eq.2}
    (\pi_{i_1} \circ u(s), \pi_{i_2} \circ u(s), \ldots, \pi_{i_r} \circ u(s)) = (p^1_k, p^1_k, \ldots, p^1_k),
  \end{equation}
  whence
  \begin{equation}\label{dthms.eq.pi-pi}
    \pi_{i_\alpha} \circ u(s) = \pi_{i_\beta} \circ u(s) \text{ for all $\alpha$ and $\beta$}.
  \end{equation}
  Let $u(s) = e_m = (p^1_m, p^2_m, \ldots, p^r_m)$. 
  
  \medskip
  
  $2.1^\circ$. Assume that $p^1_m = p^2_m = \ldots = p^r_m$. Then
  $$
    (p^1_m, p^1_m, \ldots, p^1_m) = (\pi_{i_1} \circ u(s), \pi_{i_2} \circ u(s), \ldots, \pi_{i_r} \circ u(s)) = (p^1_k, p^1_k, \ldots, p^1_k)
  $$
  whence $u(s) = e_m = e_k$. Then $s \in u^{-1}(e_k)$, so $t \le s$. Hence,
  $$
    (1t, 1t, \ldots, 1t) \precsaleq (i_1s, i_2s, \ldots, i_rs)
  $$
  for any choice $(i_1, i_2, \ldots, i_r) \in
  \{(1,1, \ldots, 1), \ldots, (r, r, \ldots, r), (1, 2, \ldots, r)\}$.
  
  \medskip
  
  $2.2^\circ$. Assume, now, that $p^1_m < p^2_m < \ldots < p^r_m$.
  Then \eqref{dthms.eq.pi-pi} implies that $i_1 = i_2 = \ldots = i_r$. Let $i_1 = i_2 = \ldots = i_r = \alpha$.
  Since $\tp(p^1_k, p^1_k, \ldots, p^1_k) \triangleleft \tp(p^1_m, p^2_m, \ldots, p^r_m)$
  it follows that
  $$
    e_k = (p^1_k, p^1_k, \ldots, p^1_k) \precsal (p^1_m, p^2_m, \ldots, p^r_m) = e_m.
  $$
  Therefore, $t = \min u^{-1}(e_k) < \min u^{-1}(e_m) \le s$ whence
  $(1t, 1t, \ldots, 1t) \precsal (\alpha s, \alpha s, \ldots, \alpha s) = (i_1s, i_2s, \ldots, i_rs)$.

  \medskip
  
  This concludes the proof of Claim~1.
  
  \medskip

  We are now ready to show that $\hat\phi_u : (\rho_{r \otimes \calL}, \Boxed\precsal) \to (\rho_\calA, \Boxed\lsal)$
  is a rigid surjection.
  Take any $e_i = (p^1_i, p^2_i, \ldots, p^r_i), e_j  = (p^1_j, p^2_j, \ldots, p^r_j) \in \rho_\calA$
  such that $e_i \lsal e_j$. Then $\min u^{-1}(e_i) < \min u^{-1}(e_j)$
  because $u$ is a rigid surjection. Let $s = \min u^{-1}(e_i)$ and $t = \min u^{-1}(e_j)$.
  \begin{itemize}
  \item
    If $p^1_i = p^2_i = \ldots = p^r_i$ and $p^1_j = p^2_j = \ldots = p^r_j$ then, by Claim~1,
    $\min \hat \phi_u^{-1}(e_i) = (1s, 1s, \ldots, 1s) \precsal (1t, 1t, \ldots, 1t) = \min \hat \phi_u^{-1}(e_j)$.
  \item
    If $p^1_i = p^2_i = \ldots = p^r_i$ and $p^1_j < p^2_j < \ldots < p^r_j$ then, by Claim~1,
    $\min \hat \phi_u^{-1}(e_i) = (1s, 1s, \ldots, 1s) \precsal (1t, 2t, \ldots, rt) = \min \hat \phi_u^{-1}(e_j)$.
  \item
    If $p^1_i < p^2_i < \ldots < p^r_i$ and $p^1_j < p^2_j < \ldots < p^r_j$ then, by Claim~1,
    $\min \hat \phi_u^{-1}(e_i) = (1s, 2s, \ldots, rs) \precsal (1t, 2t, \ldots, rt) = \min \hat \phi_u^{-1}(e_j)$.
  \end{itemize}
  This proves that $\hat \phi_u$ is a rigid surjection and the definition of $\Phi$ is correct.

  We still have to show that this family of maps satisfies the requirement~(PA). But this is easy.
  Let $\calB = (\{1, 2, \ldots, \ell\}, \rho_\calB, \Boxed<)$ be a finite $r$-erst
  where $<$ is the usual ordering of the integers, and let $f : \calA \to \calB$ be a strong rigid quotient map.
  Then $\hat f : (\rho_\calA, \Boxed\lsal) \to (\rho_\calB, \Boxed\lsal)$
  is a rigid surjection by definition. Let us show that $f \circ \phi_u = \phi_{\hat f \circ u}$:
  $$
    f \circ \phi_u(is) = f \circ \pi_i \circ u(s) = \pi_i \circ \hat f \circ u(s) = \phi_{\hat f \circ u}(is).
  $$
  This calculation relies on the fact that $f \circ \pi_i = \pi_i \circ \hat f$ which is clearly true:
  $f \circ \pi_i(x_1, x_2, \ldots, x_r) = f(x_i) = \pi_i(f(x_1), f(x_2), \ldots, f(x_r)) = \pi_i \circ \hat f (x_1, x_2, \ldots, x_r)$.
\end{proof}

Specializing the above result for $r = 2$ we get the following. 

\begin{COR}
  The following categories have the dual Ramsey property:
  \begin{itemize}
  \item
    the category $\RdlSrqm$ whose objects are finite reflexive digraphs with linear extensions
    and morphisms are strong rigid quotient maps;
  \item
    the category $\EPosSrqm$ whose objects are finite posets with linear extensions and morphisms are strong rigid quotient maps.
  \end{itemize}
\end{COR}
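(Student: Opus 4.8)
The plan is to derive both statements from Theorem~\ref{dthms.thm.erst} with $r = 2$. First I would note that, unravelling the definitions, a reflexive digraph with a linear extension is exactly a $2$-erst: each is a linearly ordered reflexive binary structure $(A, \rho, \Boxed\sqsubset)$ in which every non-diagonal pair $(x, y) \in \rho$ satisfies $x \sqsubset y$. As the morphisms of $\RdlSrqm$ and of $\RslSrqm(2)$ are in both cases the strong rigid quotient maps, the two categories coincide, so the first bullet is immediate from the ``in particular'' clause of Theorem~\ref{dthms.thm.erst}.

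For the second bullet I would exhibit $\EPosSrqm$ as a full subcategory of $\RslSrqm(2)$ and then verify the one hypothesis of Theorem~\ref{dthms.thm.erst}. A poset with a linear extension is precisely a $2$-erst whose relation $\rho$ is in addition antisymmetric and transitive; since the morphism class (strong rigid quotient maps) is unchanged, $\EPosSrqm$ is indeed a full subcategory of $\RslSrqm(2)$. It then remains to check that $2 \otimes \calL \in \Ob(\EPosSrqm)$ for every finite chain $\calL$, i.e.\ that the relation
$$
  \rho_{2 \otimes \calL} = \Delta \union \{(1\ell, 2\ell) : \ell \in \calL\}
$$
(where $\Delta$ is the diagonal) is a partial order. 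Antisymmetry holds because the only non-diagonal pairs are the $(1\ell, 2\ell)$, and their reverses $(2\ell, 1\ell)$ never belong to $\rho_{2 \otimes \calL}$. Transitivity holds because two non-diagonal pairs $(1\ell, 2\ell)$ and $(1\ell', 2\ell')$ are never composable: composability would force $2\ell = 1\ell'$, which is impossible since the first coordinates $2$ and $1$ differ. Hence $2 \otimes \calL$ is a poset with a linear extension, it lies in $\Ob(\EPosSrqm)$, and Theorem~\ref{dthms.thm.erst} applies.

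The argument is essentially bookkeeping, and I expect no real obstacle. The only place calling for a moment's care is the transitivity of $\rho_{2 \otimes \calL}$, which succeeds vacuously precisely because the two layers $\{1\} \times \calL$ and $\{2\} \times \calL$ are separated by the first coordinate, so no chain of two strict relations can ever be formed.
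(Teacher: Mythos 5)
Your proposal is correct and follows the same route as the paper: the first item is the identification $\RdlSrqm = \RslSrqm(2)$, and the second exhibits $\EPosSrqm$ as a full subcategory of $\RslSrqm(2)$ containing every $2 \otimes \calL$, which the paper states without the explicit antisymmetry/transitivity check you supply. The extra verification is accurate and harmless.
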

\begin{proof}
  For the first item it suffices to note that $\RdlSrqm = \RslSrqm(2)$.
  For the second item it suffices to note that $\EPosSrqm$ is a full subcategory of $\RdlSrqm$ such that $2 \otimes \calL \in
  \Ob(\EPosSrqm)$ for every finite chain~$\calL$.
\end{proof}

As another corollary of Theorem~\ref{dthms.thm.erst}
we shall now prove a dual Ramsey theorem for reflexive graphs and hypergraphs together with special quotient maps.

\begin{DEF}
  For a chain $\calA = (A, \Boxed\sqsubset)$ let us define $\sqsal$ on $\calP(A)$ as follows
  (``sal'' in the subscript stands for ``special anti-lexicographic''). Take any $X, Y \in \calP(A)$.
  \begin{itemize}
  \item
    $\0$ is the least element of $\calP(A)$ with respect to~$\sqsal$;
  \item
    if $X = \{x\}$ and $Y = \{y\}$ then $X \sqsal Y$ iff $x \sqsubset y$;
  \item
    if $|X| = 1$ and $|Y| > 1$ then $X \sqsal Y$;
  \item
    if $|X| > 1$ and $|Y| > 1$ then $X \sqsal Y$ iff $X \sqalex Y$.
  \end{itemize}
\end{DEF}

For a set $A$ and an integer $k$ let $\binom Ak$ denote the set of all the $k$-element subsets of~$A$.
Let $r \ge 2$ be an integer. A \emph{linearly ordered reflexive $r$-uniform hypergraph}
is a triple $(V, E, \Boxed\sqsubset)$ where $E = \binom V1 \union S$ for some $S \subseteq \binom Vr$,
and $\sqsubset$ is a linear order on~$V$.
A mapping $f : V \to V'$ between (unordered) reflexive $r$-uniform hypergraphs
$\calG = (V, E)$ and $\calG' = (V', E')$ is a \emph{hypergraph homomorphism} 
if $e \in E$ implies $\{f(x) : x \in e\} \in E'$.

\begin{DEF}
  Let $\calG = (V, E, \Boxed\sqsubset)$ and $\calG' = (V', E', \Boxed{\sqsubset'})$ be linearly ordered reflexive $r$-uniform
  hypergraphs. Each hypergraph homomorphism $f : (V, E) \to (V', E')$ induces a mapping $\tilde f : E \to E'$ straightforwardly:
  $\tilde f(e) = \{f(x) : x \in e\}$. A hypergraph homomorphism $f : (V, E) \to (V', E')$ is a \emph{strong rigid quotient map
  of hypergraphs} if
  \begin{itemize}
  \item
    $\tilde f : (E, \Boxed\sqsal) \to (E', \Boxed{\sqprimesal})$ is a rigid surjection, and
  \item
    for every $e = \{x_1, \ldots, x_r\} \in E$, if $\restr fe$ is not a constant map then
    $x_i \sqsubset x_j \Rightarrow f(x_i) \mathrel{\sqsubset'} f(x_j)$ for all $i$ and $j$.
  \end{itemize}
\end{DEF}

\begin{EX}
  Let $C_3 = (\{1, 2, 3\}, E_3, \Boxed<)$ and $C_4 = (\{1, 2, 3, 4\}, E_4, \Boxed<)$ be the
  reflexive 3-cycle and the reflexive 4-cycle, respectively, where
  $E_3 = \{1, 2, 3, 12, 23, 31\}$, $E_4 = \{1, 2, 3, 4, 12, 23, 34, 14\}$,
  and $<$ is the usual ordering of the integers. Let
  $$
    f = \begin{pmatrix}
      1 & 2 & 3 & 4\\
      1 & 1 & 2 & 3
    \end{pmatrix}
    \text{,\quad}
    g = \begin{pmatrix}
      1 & 2 & 3 & 4\\
      1 & 2 & 3 & 3
    \end{pmatrix}
  $$
  be two quotient maps $(\{1, 2, 3, 4\}, E_4) \to (\{1, 2, 3\}, E_3)$. Then $f$ is a strong rigid quotient map and $g$
  is not. Namely,
  $$
    \tilde f = \begin{pmatrix}
      1 & 2 & 3 & 4 & 12 & 23 & 14 & 34\\
      1 & 1 & 2 & 3 & 1  & 12 & 13 & 23
    \end{pmatrix}
    \text{,\quad}
    \tilde g = \begin{pmatrix}
      1 & 2 & 3 & 4 & 12 & 23 & 14 & 34\\
      1 & 2 & 3 & 3 & 12 & 23 & 13 & 3
    \end{pmatrix}
  $$
  and we can now easily see that $\tilde f : (E_4, \lsal) \to (E_3, \lsal)$ is rigid while
  $\tilde g : (E_4, \lsal) \to (E_3, \lsal)$ is not.
\end{EX}

Let $\ERhgSrqm(r)$ be the category whose objects are finite linearly ordered reflexive $r$-uniform hypergraphs
and whose morphisms are strong rigid quotient maps of hypergraphs.
Note that $\ERhgSrqm(2)$ is the category whose objects are finite linearly ordered reflexive graphs
and whose morphisms are strong rigid quotient maps of graphs. Let us denote this category by~$\GraSrqm$.

For a finite chain $\calL = \{\ell_1 \sqsubset \ell_2 \sqsubset \ldots \sqsubset \ell_N\}$ and $r \ge 2$ let
$$
  r \boxtimes \calL = (\{1, 2, \ldots, r\} \times \{\ell_1, \ell_2, \ldots, \ell_N\}, E_{r \boxtimes \calL}, \Boxed{\prec})
$$
denote a linearly ordered reflexive $r$-uniform hypergraph on $r \cdot N$ vertices (written $i\ell$ instead of $(i, \ell)$)
where
\begin{align*}
  E_{r \boxtimes \calL} = \mathstrut &\big\{ \{i\ell_j\} : 1 \le i \le r, 1 \le j \le N \big\} \union \mathstrut\\
                          & \qquad \union \big\{ \{1\ell_j, 2\ell_j, \ldots, r\ell_j\} : 1 \le j \le N \big\},
\end{align*}
and $\prec$ is the anti-lexicographic ordering of $\{1, 2, \ldots, r\} \times \{\ell_1, \ell_2, \ldots, \ell_N\}$ induced by the
respective linear orderings: $i\ell \prec jk$ iff $\ell \sqsubset k$, or $\ell = k$ and $i < j$.

\begin{COR}\label{dthms.cor.lorg}
  Let $\CC$ be a full subcategory of $\ERhgSrqm(r)$, $r \ge 2$,
  such that for every finite chain $\calL$ there is an object in $\Ob(\CC)$
  isomorphic to~$r \boxtimes \calL$. Then $\CC$ has the dual Ramsey property.
  In particular, the following categories have the dual Ramsey property:
  \begin{itemize}
  \item
    the category $\ERhgSrqm(r)$ for every $r \ge 2$;
  \item
    the category $\GraSrqm$;
  \item
    the full subcategory of $\GraSrqm$ spanned by bipartite graphs;
  \item
    the full subcategory of $\GraSrqm$ spanned by $K_n$-free graphs (where $n \ge 3$ is fixed).
  \end{itemize}
\end{COR}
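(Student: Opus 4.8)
The plan is to realize $\ERhgSrqm(r)$ as an isomorphic copy of $\RslSrqm(r)$ and then appeal to Theorem~\ref{dthms.thm.erst}. The starting point is that in any $r$-erst $(A, \rho, \Boxed\sqsubset)$ the relation $\rho$ is extremely constrained: reflexivity forces all fully diagonal tuples into $\rho$, while the linear-extension requirement forbids every non-diagonal tuple that is not strictly increasing. Thus $\rho$ is the union of the diagonal $\Delta_{A, r}$ with a family of strictly increasing $r$-tuples, and a strictly increasing tuple $(x_1 \sqsubset \ldots \sqsubset x_r)$ is the same datum as the $r$-set $\{x_1, \ldots, x_r\}$, while a diagonal tuple $(v, \ldots, v)$ is the same datum as the singleton $\{v\}$. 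This yields a bijection between $r$-erst's and linearly ordered reflexive $r$-uniform hypergraphs on a common linearly ordered vertex set, under which $\rho$ corresponds to $E$ by sending diagonal tuples to singletons and increasing tuples to $r$-sets.

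First I would check that this bijection is an order isomorphism from $(\rho, \Boxed\sqsal)$ to $(E, \Boxed\sqsal)$. On the tuple side a diagonal tuple has a type with one equivalence class and an increasing tuple a type with $r$ equivalence classes, so the $\triangleleft$-clause of the definition of $\sqsal$ places every diagonal tuple below every increasing one, while within each of the two types $\sqsal$ reduces to $\sqalex$ on the underlying sets. On the set side the dedicated definition of $\sqsal$ on $\calP(A)$ likewise puts every singleton below every larger set, orders singletons by $\sqsubset$, and orders larger sets by $\sqalex$. These two recipes agree under the bijection.

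Next I would show that a single vertex map $f : V \to V'$ is a morphism on the hypergraph side exactly when it is a morphism on the $r$-erst side, so that the bijection upgrades to an isomorphism of categories that is the identity on underlying maps. Under the identification the induced maps coincide: $\hat f$ sends a non-collapsed increasing tuple to the increasing tuple of the image $r$-set and a collapsed tuple to a diagonal tuple, which is precisely what $\tilde f$ does on the corresponding edges; combined with the previous step this makes $\hat f$ a rigid surjection iff $\tilde f$ is. The delicate point, which I expect to be the main obstacle, is matching the two homomorphism conditions. In the $r$-erst the mere requirement $\hat f(\rho) \subseteq \rho'$ already forces each non-collapsed edge to be mapped order-preservingly, since $\rho'$ contains no tuple that is both non-diagonal and non-increasing; in the hypergraph the very same effect is what the second bullet of the strong-rigid-quotient-map definition imposes by hand. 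Both implications must be verified: a hypergraph strong rigid quotient map gives an $r$-erst homomorphism precisely because its second bullet guarantees that $\hat f$ lands in $\rho'$, and conversely an $r$-erst morphism automatically satisfies the second bullet because a non-constant restriction to an edge produces a non-diagonal, hence strictly increasing, image tuple.

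With the isomorphism $\ERhgSrqm(r) \cong \RslSrqm(r)$ established, it carries $r \boxtimes \calL$ to $r \otimes \calL$, as the $r$-set $\{1\ell, \ldots, r\ell\}$ corresponds to the increasing tuple $(1\ell, \ldots, r\ell)$. Hence the image of $\CC$ is a full subcategory of $\RslSrqm(r)$ that contains an object isomorphic to $r \otimes \calL$ for every finite chain $\calL$, so Theorem~\ref{dthms.thm.erst} endows it with the dual Ramsey property; since isomorphic categories share that property, so does $\CC$. For the four listed instances it then suffices to confirm membership of the relevant generators: $\ERhgSrqm(r)$ and $\GraSrqm = \ERhgSrqm(2)$ contain every $r \boxtimes \calL$ outright, while the non-loop edges of $2 \boxtimes \calL$ form the perfect matching $\{\{1\ell, 2\ell\} : \ell \in \calL\}$ between $\{1\} \times \calL$ and $\{2\} \times \calL$, which is bipartite and contains no triangle, hence is $K_n$-free for every $n \ge 3$.
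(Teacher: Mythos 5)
Your proposal is correct and follows essentially the same route as the paper: you build the isomorphism $\ERhgSrqm(r) \cong \RslSrqm(r)$ via the correspondence between $\rho = \Delta_{V,r} \cup \{\text{increasing tuples}\}$ and $E = \binom V1 \cup S$, check that it respects both the homomorphism conditions (including the role of the second bullet in the hypergraph morphism definition) and the two $\sqsal$ orders, observe $r \boxtimes \calL \mapsto r \otimes \calL$, and invoke Theorem~\ref{dthms.thm.erst}. The closing verifications for the bipartite and $K_n$-free subcategories also match the paper's.
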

\begin{proof}
  Let us start by proving that $\ERhgSrqm(r)$ and $\RslSrqm(r)$ are isomorphic.
  Define a functor
  $$
    F : \ERhgSrqm(r) \to \RslSrqm(r) : (V, E, \Boxed\sqsubset) \mapsto (V, \rho, \Boxed\sqsubset) : f \mapsto f,
  $$
  where
  \begin{align*}
    \rho = \Delta_{V, r} \union \mathstrut & \{(x_1, x_2, \ldots, x_r) \in V^r : \\
                                           & \qquad x_1 \sqsubset x_2 \sqsubset \ldots \sqsubset x_r
                                                                   \text{ and } \{x_1, x_2, \ldots, x_r\} \in E\}.
  \end{align*}
  On the other hand, define a functor
  $$
    G : \RslSrqm(r) \to \ERhgSrqm(r) : (A, \rho, \Boxed\sqsubset) \mapsto (A, E, \Boxed\sqsubset) : f \mapsto f,
  $$
  where
  $$
    E = \big\{\{x_1, x_2, \ldots, x_r\} : (x_1, x_2, \ldots, x_r) \in \rho\big\}.
  $$
  By construction, $F$ and $G$ are mutually inverse functors,
  so the categories $\ERhgSrqm(r)$ and $\RslSrqm(r)$ are isomorphic. However, we still have to show that the
  functors $F$ and $G$ are well defined.  
  It is easy to see that both $F$ and $G$ are well defined on objects. Let us show that both $F$ and $G$
  are well defined on morphisms.

  Let $f : (V, E, \Boxed\sqsubset) \to (V', E', \Boxed{\sqsubset'})$ be a morphism in $\ERhgSrqm(r)$ and let us show
  that $f : (V, \rho) \to (V', \rho')$ is a homomorphism.
  Take any $(x_1, x_2, \ldots, x_r) \in \rho \setminus \Delta_{V,r}$. Then
  $$
    \{x_1, x_2, \ldots, x_r\} \in E \text{ and } x_1 \sqsubset x_2 \sqsubset \ldots \sqsubset x_r.
  $$
  If $f(x_1) = f(x_2) = \ldots = f(x_r)$ we are done. Assume, therefore, that $\restr{f}{\{x_1, x_2, \ldots, x_r\}}$ is not
  a constant map. Then, by the definition of morphisms in $\ERhgSrqm(r)$ it follows that
  $$
    \{f(x_1), f(x_2), \ldots, f(x_r)\} \in E \text{ and } f(x_1) \mathrel{\sqsubset'} f(x_2) \mathrel{\sqsubset'} \ldots \mathrel{\sqsubset'} f(x_r).
  $$
  Therefore, $(f(x_1), f(x_2), \ldots, f(x_r)) \in \rho'$.
  
  Conversely,
  let $f : (V, \rho, \Boxed\sqsubset) \to (V', \rho', \Boxed{\sqsubset'})$ be a morphism in $\RslSrqm(r)$ and let us show
  that $f : (V, E) \to (V', E')$ is a homomorphism of hypergraphs.
  Take any $\{x_1, x_2, \ldots, x_r\} \in E$. If $f(x_1) = f(x_2) = \ldots = f(x_r)$ we are done. Assume, therefore,
  that $\restr{f}{\{x_1, x_2, \ldots, x_r\}}$ is not a constant map. Without loss of generality we may assume that
  $x_1 \sqsubset x_2 \sqsubset \ldots \sqsubset x_r$. Then $(x_1, x_2, \ldots, x_r) \in \rho$, so
  $(f(x_1), f(x_2), \ldots, f(x_r)) \in \rho'$ because $f$ is a morphism in $\RslSrqm(r)$. Therefore,
  $\{f(x_1), f(x_2), \ldots, f(x_r)\} \in E'$ and
  $f(x_1) \mathrel{\sqsubset'} f(x_2) \mathrel{\sqsubset'} \ldots \mathrel{\sqsubset'} f(x_r)$.
  So, $f$ is a homomorphism satisfying the additional requirement that
  for every $e = \{x_1, \ldots, x_r\} \in E$, if $\restr fe$ is not a constant map then
  $x_i \sqsubset x_j \Rightarrow f(x_i) \mathrel{\sqsubset'} f(x_j)$ for all $i$ and $j$.
  
  In order to complete the proof that $F$ and $G$ are well defined on morphisms we still have to show that
  $\hat f : (\rho, \Boxed\sqsal) \to (\rho', \Boxed\sqprimesal)$ is a rigid surjection iff
  $\tilde f : (E, \Boxed\sqsal) \to (E', \Boxed\sqprimesal)$ is a rigid surjection. But this follows straightforwardly
  from the following facts:
  \begin{itemize}
  \item
    the mapping $\xi : \rho \to E : (x_1, x_2, \ldots, x_r) \mapsto \{x_1, x_2, \ldots, x_r\}$ is an isomorphism from
    $(\rho, \Boxed\sqsal)$ to $(E, \Boxed\sqsal)$,
  \item
    the mapping $\xi' : \rho' \to E' : (x_1, x_2, \ldots, x_r) \mapsto \{x_1, x_2, \ldots, x_r\}$ is an isomorphism from
    $(\rho', \Boxed\sqprimesal)$ to $(E', \Boxed\sqprimesal)$, and
  \item
    $\tilde f = \xi' \circ \hat f \circ \xi^{-1}$.
  \end{itemize}
  Therefore, $F$ and $G$ are well defined functors. 

  The first item in the statement of the theorem now follows immediately from Theorem~\ref{dthms.thm.erst} having in mind that
  $F(r \boxtimes \calL) = r \otimes \calL$ for every chain~$\calL$.
  As for the remaining items, note that $\GraSrqm = \ERhgSrqm(2)$, and that both subcategories of $\GraSrqm$
  mentioned in the third and the fourth item contain $2 \boxtimes \calL$ for every finite chain~$\calL$.
\end{proof}

\section{The Dual Ne\v set\v ril-R\"odl Theorem}
\label{dthms.sec.DNRT}

One of the cornerstones of the structural Ramsey theory is the Ne\v set\v ril-R\"odl Theorem
(Theorem~\ref{dthms.thm.NRT}). In this section we prove a dual form of the Ne\v set\v ril-R\"odl Theorem,
albeit only in its restricted form which does not account for subclasses defined by forbidden ``quotients''.

Let $\Theta$ be a relational language and let $\Boxed\sqsubset \notin \Theta$ be a new binary relational symbol.
A \emph{reflexive $\Theta$-structure with a linear extension} (or \emph{$\Theta$-erst} for short) is a
linearly ordered reflexive $\Theta$-structure $\calA = (A, \Theta^\calA, \Boxed{\sqsubset^\calA})$ such that
$(A, \theta^\calA, \Boxed{\sqsubset^\calA})$ is an $\arity(\theta)$-erst for every $\theta \in \Theta$.

\begin{DEF}
  Let $\calA = (A, \Theta^\calA, \Boxed{\sqsubset^\calA})$ and $\calB = (B, \Theta^\calB, \Boxed{\sqsubset^\calB})$ be two
  $\Theta$-erst's. A homomorphism $f : (A, \Theta^\calA) \to (B, \Theta^\calB)$ is a \emph{strong rigid quotient map}
  from $\calA$ to $\calB$ if $\hat f : (\theta^\calA, \Boxed{\sqsalidx^\calA}) \to (\theta^\calB, \Boxed{\sqsalidx^\calB})$
  is a rigid surjection for every $\theta \in \Theta$.
\end{DEF}

Let $\RslSrqm(\Theta, \Boxed\sqsubset)$ be the category whose objects are all finite $\Theta$-erst's and whose morphisms
are strong rigid quotient maps. In order to prove that $\RslSrqm(\Theta, \Boxed\sqsubset)$ has the dual Ramsey property
we shall employ a strategy devised in~\cite{masul-drp-perm}. Let us recall two technical statements from~\cite{masul-drp-perm}.

A \emph{diagram} in a category $\CC$ is a functor $F : \Delta \to \CC$ where the category $\Delta$ is referred to as the
\emph{shape of the diagram}. We shall say that a diagram $F : \Delta \to \CC$ is \emph{consistent in $\CC$} if there exists a $C \in \Ob(\CC)$
and a family of morphisms $(h_\delta : F(\delta) \to C)_{\delta \in \Ob(\Delta)}$ such that for every
morphism $g : \delta \to \gamma$ in $\Delta$ we have $h_\gamma \cdot F(g) = h_\delta$:
$$
  \xymatrix{
     & C & \\
    F(\delta) \ar[ur]^{h_\delta} \ar[rr]_{F(g)} & & F(\gamma) \ar[ul]_{h_\gamma}
  }
$$
We say that $C$ together with the family of morphisms
$(h_\delta)_{\delta \in \Ob(\Delta)}$ forms a \emph{compatible cone in $\CC$ over the diagram~$F$}.

A \emph{binary category} is a finite, acyclic, bipartite digraph with loops 
where all the arrows go from one class of vertices into the other
and the out-degree of all the vertices in the first class is~2 (modulo loops):
$$
\xymatrix{
  & \\
  \bullet \ar@(ur,ul) & \bullet \ar@(ur,ul) & \bullet \ar@(ur,ul) & \ldots & \bullet \ar@(ur,ul) \\
  \bullet \ar@(dr,dl) \ar[u] \ar[ur] & \bullet \ar@(dr,dl) \ar[ur] \ar[ul] & \bullet \ar@(dr,dl) \ar[u] \ar[ur] & \ldots & \bullet \ar@(dr,dl) \ar[u] \ar[ull] \\
  &
}
$$
A \emph{binary diagram} in a category $\CC$ is a functor $F : \Delta \to \CC$ where $\Delta$ is a binary category,
$F$ takes the bottom row of $\Delta$ onto the same object, and takes the top row of $\Delta$ onto
the same object, Fig.~\ref{nrt.fig.2}.
A subcategory $\DD$ of a category $\CC$ is \emph{closed for binary diagrams} if every binary diagram
$F : \Delta \to \DD$ which is consistent in $\CC$ is also consistent in~$\DD$.

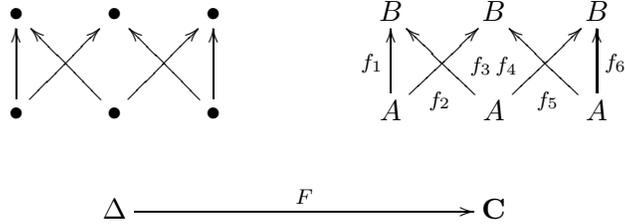
\begin{figure}
  $$
  \xymatrix{
    \bullet & \bullet & \bullet
    & & B & B & B
  \\
    \bullet \ar[u] \ar[ur] & \bullet \ar[ur] \ar[ul] & \bullet \ar[ul] \ar[u]
    & & A \ar[u]^{f_1} \ar[ur]_(0.3){f_2} & A \ar[ur]^(0.3){f_4} \ar[ul]_(0.3){f_3} & A \ar[ul]^(0.3){f_5} \ar[u]_{f_6}
  \\
    & \Delta \ar[rrrr]^F  & & & & \CC  
  }
  $$
  \caption{A binary diagram in $\CC$ (of shape $\Delta$)}
  \label{nrt.fig.2}
\end{figure}

\begin{THM}\label{nrt.thm.1} \cite{masul-drp-perm}
  Let $\CC$ be a category such that every morphism in $\CC$ is monic and
  such that $\hom_\CC(A, B)$ is finite for all $A, B \in \Ob(\CC)$, and let $\DD$ be a
  (not necessarily full) subcategory of~$\CC$.
  If $\CC$ has the Ramsey property and $\DD$ is closed for binary diagrams, then $\DD$ has the Ramsey property.
\end{THM}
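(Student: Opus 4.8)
The plan is to keep $\CC$ as the ``ambient'' Ramsey category and manufacture the required Ramsey object inside $\DD$ out of a single, carefully chosen binary diagram. Fix $A, B \in \Ob(\DD)$ with $\hom_\DD(A, B) \ne \0$ and an integer $k \ge 2$; I must produce $C^* \in \Ob(\DD)$ with $C^* \longrightarrow (B)^A_k$ in $\DD$. Since $\hom_\CC(A, B) \supseteq \hom_\DD(A, B) \ne \0$, the Ramsey property of $\CC$ furnishes an object $C \in \Ob(\CC)$ with $C \longrightarrow (B)^A_k$ in $\CC$. The object $C$ need lie neither in $\DD$ nor be full with respect to it, so the whole difficulty is to transport its Ramsey-ness into $\DD$.

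The central construction is what I would call the \emph{coincidence diagram} of $C$. I would take the top row of $\Delta$ to be indexed by the (finite, by hypothesis) set $\hom_\CC(B, C)$ of all copies of $B$ in $C$, each top vertex sent by $F$ to $B$; and I would introduce one bottom vertex, sent to $A$, for every quadruple $(w, w', f, f')$ with $w, w' \in \hom_\CC(B, C)$, $f, f' \in \hom_\DD(A, B)$ and $w \cdot f = w' \cdot f'$ in $C$, letting its two out-arrows be sent by $F$ to $f$ and $f'$ and point to the top vertices indexed by $w$ and $w'$. Finiteness of the hom-sets guarantees that $\Delta$ is a genuine (finite) binary category, and the fact that morphisms of $\CC$ are monic guarantees that whenever $w = w'$ one already has $f = f'$, so no degenerate bottom vertex occurs and $F : \Delta \to \DD$ is a well-formed binary diagram. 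This diagram is consistent in $\CC$: assigning to the top vertex $w$ the morphism $h_w = w$ and to the bottom vertex $(w, w', f, f')$ the common value $w \cdot f = w' \cdot f'$ produces a compatible cone with apex $C$.

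Closure of $\DD$ for binary diagrams then yields a compatible cone over $F$ inside $\DD$, say with apex $C^* \in \Ob(\DD)$ and cone morphisms $g_w \in \hom_\DD(B, C^*)$ at the top vertices. I claim $C^*$ is the desired object. Given a $k$-coloring $\chi$ of $\hom_\DD(A, C^*)$, I would define a coloring $\psi$ of $\hom_\CC(A, C)$ by $\psi(w \cdot f) = \chi(g_w \cdot f)$ whenever $f \in \hom_\DD(A, B)$ and $w \in \hom_\CC(B, C)$ (and arbitrarily on the remaining copies of $A$). This is well defined precisely because of the bottom vertices: if $w \cdot f = w' \cdot f'$ with $f, f' \in \hom_\DD(A, B)$, then $(w, w', f, f')$ is a bottom vertex, so the cone equations give $g_w \cdot f = g_{w'} \cdot f'$, whence $\chi$ agrees. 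Applying $C \longrightarrow (B)^A_k$ to $\psi$ produces a copy $w^* \in \hom_\CC(B, C)$ on which $\psi$ is constant over all of $\hom_\CC(A, B)$, hence over the subset $\hom_\DD(A, B)$; reading the definition of $\psi$ backwards shows that $\chi$ is constant on $g_{w^*} \cdot \hom_\DD(A, B)$, so $g_{w^*}$ is a $\chi$-monochromatic copy of $B$ in $C^*$, as required.

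The step I expect to be genuinely delicate is the design of the bottom vertices. Their defining relation $w \cdot f = w' \cdot f'$ is exactly the well-definedness obstruction for $\psi$, and the reason a \emph{binary} diagram (out-degree two) suffices is that this obstruction is purely pairwise: one only ever compares two factorizations at a time. I would therefore take care to verify that the two colour transfers mesh --- that the Ramsey witness $w^*$ delivered by $\CC$ on the larger index set $\hom_\CC(A, B)$ still controls the $\DD$-copies $g_{w^*} \cdot f$ indexed by the smaller set $\hom_\DD(A, B)$ --- and to confirm that finiteness (to keep $\Delta$ a legitimate binary category) and monicity (to discard degenerate coincidences) are used exactly where claimed.
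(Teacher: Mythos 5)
Your argument is correct and is essentially the proof of this theorem in the cited source \cite{masul-drp-perm} (the present paper only imports the result): one records every coincidence $w \cdot f = w' \cdot f'$ of factorizations through copies of $B$ in the Ramsey witness $C$ as a bottom vertex of a binary diagram, notes that $C$ itself makes this diagram consistent in $\CC$, and uses closure of $\DD$ for binary diagrams to obtain an apex $C^* \in \Ob(\DD)$ through which the colouring transfers exactly as you describe. The finiteness hypothesis (to keep the diagram finite) and monicity (to discard coincidences with $w = w'$) are used precisely where you invoke them, so there is nothing to add.
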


We shall also need a categorical version of the Product Ramsey Theorem for Finite Structures of M.~Soki\'c~\cite{sokic2}.
We proved this statement in the categorical context in~\cite{masul-drp-perm} where we used this abstract version
to prove that the class of finite permutations has the dual Ramsey property.

\begin{THM}\label{sokic-prod} \cite{masul-drp-perm}
  Let $\CC_1$ and $\CC_2$ be categories such that $\hom_{\CC_i}(A, B)$ is finite
  for all $A, B \in \Ob(\CC_i)$, $i \in \{1, 2\}$.
  If $\CC_1$ and $\CC_2$ both have the Ramsey property then
  $\CC_1 \times \CC_2$ has the Ramsey property.
  
  Consequently, if $\CC_1, \ldots, \CC_n$ are categories with the Ramsey property then the category
  $\CC_1 \times \ldots \times \CC_n$ has the Ramsey property.
\end{THM}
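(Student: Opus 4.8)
The plan is to establish the two-factor statement and then deduce the general case by induction on $n$. The inductive step is painless once one observes that in a product category the hom-sets factor as $\hom_{\CC_1 \times \CC_2}((A_1, A_2), (B_1, B_2)) = \hom_{\CC_1}(A_1, B_1) \times \hom_{\CC_2}(A_2, B_2)$; hence a product of two categories with finite hom-sets again has finite hom-sets, so the finiteness hypothesis propagates, and writing $\CC_1 \times \ldots \times \CC_n = (\CC_1 \times \ldots \times \CC_{n-1}) \times \CC_n$ closes the induction. Thus everything reduces to proving that $\CC_1 \times \CC_2$ has the Ramsey property.

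For the two-factor case, fix $k \ge 2$ and objects $(A_1, A_2), (B_1, B_2)$ with $\hom_{\CC_1 \times \CC_2}((A_1, A_2), (B_1, B_2)) \ne \0$; by the factorization above this simply means $\hom_{\CC_1}(A_1, B_1) \ne \0$ and $\hom_{\CC_2}(A_2, B_2) \ne \0$. The heart of the argument is a two-step application of the single-coordinate Ramsey property, in the correct order, together with a blow-up in the number of colors. First I would invoke the Ramsey property of $\CC_2$ to choose $C_2$ with $C_2 \longrightarrow (B_2)^{A_2}_k$; since $\hom_{\CC_2}(A_2, C_2)$ is finite I may set $p = |\hom_{\CC_2}(A_2, C_2)| \ge 1$, and then invoke the Ramsey property of $\CC_1$ to choose $C_1$ with $C_1 \longrightarrow (B_1)^{A_1}_{k^p}$. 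The claim is that $(C_1, C_2) \longrightarrow ((B_1, B_2))^{(A_1, A_2)}_k$.

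Given a $k$-coloring $\chi$ of $\hom_{\CC_1}(A_1, C_1) \times \hom_{\CC_2}(A_2, C_2)$, I would first freeze the second coordinate into a ``super-color'': color each $\alpha \in \hom_{\CC_1}(A_1, C_1)$ by the whole tuple $(\chi(\alpha, \beta))_{\beta}$ indexed by $\beta \in \hom_{\CC_2}(A_2, C_2)$, a coloring with $k^p$ colors. The choice of $C_1$ yields $w_1 \in \hom_{\CC_1}(B_1, C_1)$ such that $\chi(w_1 \cdot \alpha, \beta)$ is independent of $\alpha \in \hom_{\CC_1}(A_1, B_1)$ for every fixed $\beta$; fixing any $\alpha_0 \in \hom_{\CC_1}(A_1, B_1)$ (which exists, by nonemptiness) this defines a genuine $k$-coloring $\psi(\beta) = \chi(w_1 \cdot \alpha_0, \beta)$ of $\hom_{\CC_2}(A_2, C_2)$. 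The choice of $C_2$ then supplies $w_2 \in \hom_{\CC_2}(B_2, C_2)$ with $\psi$ constant on $w_2 \cdot \hom_{\CC_2}(A_2, B_2)$. Finally I would check that $(w_1, w_2)$ works: for any $(\alpha, \beta) \in \hom_{\CC_1}(A_1, B_1) \times \hom_{\CC_2}(A_2, B_2)$ one has $\chi((w_1, w_2) \cdot (\alpha, \beta)) = \chi(w_1 \cdot \alpha, w_2 \cdot \beta) = \psi(w_2 \cdot \beta)$ by the $\alpha$-independence, and this value is constant by the choice of $w_2$.

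The only point requiring care is the order of the two choices together with the color blow-up: $C_2$ must be selected before $C_1$, because the number $k^p$ of colors fed into the $\CC_1$-application is governed by $|\hom_{\CC_2}(A_2, C_2)|$, and it is precisely the finiteness of the hom-sets that makes $k^p$ a legitimate finite palette to which the Ramsey property of $\CC_1$ applies. I do not expect any genuine obstacle beyond this bookkeeping; the remaining verifications are routine, provided one keeps track of the nonemptiness of $\hom_{\CC_1}(A_1, B_1)$ so that $\alpha_0$ is available.
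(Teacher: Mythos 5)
Your proof is correct: the two-step application (first choosing $C_2$, then feeding the $k^{p}$ ``super-colors'' indexed by $\hom_{\CC_2}(A_2,C_2)$ into the Ramsey property of $\CC_1$, and finally decoding via a fixed $\alpha_0$) is the standard argument for the product Ramsey theorem, and the induction on the number of factors goes through exactly as you describe. The paper itself gives no proof of this statement --- it is quoted from the cited reference --- but your argument is essentially the one used there, so there is nothing to add beyond the bookkeeping points you already flagged (finiteness of $\hom_{\CC_2}(A_2,C_2)$ to make $k^{p}$ a legitimate palette, and nonemptiness of $\hom_{\CC_1}(A_1,B_1)$ to supply $\alpha_0$).
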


We are now ready to prove the main technical result of this section.

\begin{PROP}\label{dthms.prop.theta-erst}
  The category $\RslSrqm(\Theta, \Boxed\sqsubset)$ has the dual Ramsey property
  for every relational language~$\Theta$.
\end{PROP}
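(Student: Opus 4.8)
The plan is to realize $\RslSrqm(\Theta, \Boxed\sqsubset)^\op$ as a ``diagonal'' subcategory of a product of the single-relation categories $\RslSrqm(r)^\op$, and then to transfer the Ramsey property from the product down to the subcategory by means of Theorem~\ref{nrt.thm.1}. We may assume that $\Theta = \{\theta_1, \ldots, \theta_n\}$ is finite, writing $r_i = \arity(\theta_i)$ (for fixed finite structures only finitely many symbols impose essentially distinct constraints on the morphisms — relations of arity exceeding the size of the structures are forced to be purely reflexive, and symbols imposing identical constraints may be merged — so the general case reduces to this one). By Theorem~\ref{dthms.thm.erst} each $\RslSrqm(r_i)$ has the dual Ramsey property, i.e.\ each $\RslSrqm(r_i)^\op$ has the Ramsey property, so by the Product Ramsey Theorem~\ref{sokic-prod} the product $\CC := \RslSrqm(r_1)^\op \times \cdots \times \RslSrqm(r_n)^\op$ has the Ramsey property. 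Note that $\CC = \big(\RslSrqm(r_1) \times \cdots \times \RslSrqm(r_n)\big)^\op$.

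Next I would embed $\DD := \RslSrqm(\Theta, \Boxed\sqsubset)^\op$ into $\CC$ by the diagonal functor that sends a $\Theta$-erst $\calA = (A, \Theta^\calA, \Boxed{\sqsubset^\calA})$ to the tuple of its reducts $\big((A, \theta_1^\calA, \Boxed{\sqsubset^\calA}), \ldots, (A, \theta_n^\calA, \Boxed{\sqsubset^\calA})\big)$ and a morphism $f$ to $(f, \ldots, f)$. By the very definition of a strong rigid quotient map of $\Theta$-erst's, $(f, \ldots, f)$ is indeed a morphism of the product; this functor is injective on objects and faithful, so $\DD$ is identified with a (not necessarily full) subcategory of $\CC$. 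To invoke Theorem~\ref{nrt.thm.1} I must check that every morphism of $\CC$ is monic and that all hom-sets are finite. Finiteness is immediate, as all structures are finite. For the monic requirement, a morphism of $\CC$ is a tuple of $\RslSrqm(r_i)^\op$-morphisms, and a tuple is monic in a product iff each component is; a morphism of $\RslSrqm(r_i)^\op$ is monic exactly when the corresponding morphism of $\RslSrqm(r_i)$ is epi, which holds because by Lemma~\ref{dthms.lem.strong} every strong rigid quotient map is surjective, and surjective homomorphisms are epi in these concrete categories.

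The heart of the proof is then to show that $\DD$ is closed for binary diagrams in $\CC$. So I would take a binary diagram $F : \Delta \to \DD$ that is consistent in $\CC$; its bottom row is sent to a single $\Theta$-erst $\calA$, its top row to a single $\Theta$-erst $\calB$, and each arrow to a (diagonal) strong rigid quotient map $\calB \to \calA$. Consistency in $\CC$ supplies a cone object $(\calC_1, \ldots, \calC_n) \in \Ob(\CC)$ together with compatible cone morphisms, which in the product unravel into strong rigid quotient maps $\calC_i \to \calA_i$ and $\calC_i \to \calB_i$ onto the $i$-th reducts $\calA_i = (A, \theta_i^\calA, \Boxed{\sqsubset^\calA})$ and $\calB_i = (B, \theta_i^\calB, \Boxed{\sqsubset^\calB})$, all intertwined with the diagonal diagram maps. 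The goal is to manufacture from this data a single $\Theta$-erst $\calC'$ together with diagonal strong rigid quotient maps $\calC' \to \calA$ and $\calC' \to \calB$ making the same triangles commute; the natural candidate for $\calC'$ is an appropriate fibered product of the underlying sets $C_1, \ldots, C_n$, amalgamated over $\calA$ and $\calB$ through the cone maps, carrying the linear order inherited from the common target chains and the $\Theta$-structure in which the $i$-th relation is pulled back from $\calC_i$.

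The main obstacle is precisely this construction: the fibering and the linear order on $\calC'$ must be chosen so that \emph{simultaneously} $\calC'$ is a genuine $\Theta$-erst (reflexive, with $\Boxed\sqsubset$ a linear extension of every relation), the induced maps onto $\calA$ and $\calB$ are rigid surjections of the underlying chains whose induced action on each relation is again a rigid surjection $(\theta_i^{\calC'}, \Boxed\sqsal) \to (\theta_i^\calA, \Boxed\sqsal)$, and the resulting cone commutes in $\DD$. Controlling rigidity of the merged relation-maps is the delicate point, since the coordinatewise cone maps need not share one underlying chain map; here the rigidity of the diagram morphisms together with the type-and-matrix machinery of Section~\ref{dthms.sec.prelim} (Lemmas~\ref{dthm.lem.tp-mat-facts}--\ref{dthms.lem.AUX}, expressing minima of relation-preimages under $\sqsal$ through minima of their matrices) are exactly the tools that let one pin down the minima of the relevant preimages and thereby verify rigidity. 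Once closure for binary diagrams is established, Theorem~\ref{nrt.thm.1} yields that $\DD = \RslSrqm(\Theta, \Boxed\sqsubset)^\op$ has the Ramsey property, which is to say that $\RslSrqm(\Theta, \Boxed\sqsubset)$ has the dual Ramsey property.
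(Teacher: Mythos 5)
Your high-level strategy is exactly the paper's: reduce to a finite language, form the product $\RslSrqm(r_1) \times \cdots \times \RslSrqm(r_n)$, invoke Theorem~\ref{dthms.thm.erst} and the product Ramsey theorem (Theorem~\ref{sokic-prod}), identify $\RslSrqm(\Theta, \Boxed\sqsubset)$ with the diagonal subcategory, and transfer the Ramsey property via Theorem~\ref{nrt.thm.1} by showing closure under binary diagrams. The reduction of an infinite $\Theta$ to a finite sublanguage is also what the paper does (its Part~II), and your verification of the hypotheses of Theorem~\ref{nrt.thm.1} is fine. However, there is a genuine gap at the one place where all the work lies: you do not actually construct the compatible cone in the diagonal subcategory. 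You name this as ``the main obstacle'' and ``the delicate point'' and then gesture at the type-and-matrix lemmas without producing the object or verifying anything about it. Since closure under binary diagrams \emph{is} the proof, this is not a detail that can be deferred.

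Moreover, the candidate you do propose --- a fibered product of $C_1, \ldots, C_n$ amalgamated over $\calA$ and $\calB$ --- points in the wrong direction and would be hard to repair. It is unclear over which of the several cone legs $q_1, \ldots, q_k$ one should amalgamate; surjectivity of the induced maps $\theta_s^{\calC'} \to \theta_s^\calB$ is not guaranteed, since a tuple of $\theta_s^{\calC_s}$ lying over a given tuple of $\theta_s^\calB$ need not lift to the fibered product; and there is no canonical linear order on a fibered product making the induced maps rigid surjections. The paper's construction goes the opposite way: it takes $D$ to be the \emph{disjoint union} $C_1 \union \cdots \union C_n$, sets $\theta_i^\calD = \Delta_{D, r_i} \union \theta_i^{\calC_i}$ (so the nondiagonal part of the $i$-th relation lives entirely inside $C_i$), orders $D$ by \emph{concatenating} the chains $\sqsubset^1, \ldots, \sqsubset^n$ with $C_1$ first, and defines $\phi_i : D \to B$ piecewise as $q_i^s$ on $C_s$. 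With this choice the minimal $\hat\phi_i$-preimage of a diagonal tuple is computed inside $C_1$ and that of a nondiagonal tuple of $\theta_s^\calB$ inside $C_s$, so rigidity of each $\hat\phi_i$ on $\theta_s^\calD$ reduces to rigidity of $\hat q_i^1$ and $\hat q_i^s$ plus a comparison of types, and commutativity of the cone is immediate because it holds coordinatewise. Without this (or an equally explicit) construction your argument does not close.
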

\begin{proof}
  Part~I. Assume, first, that $\Theta = \{ \theta_1, \theta_2, \ldots, \theta_n \}$ is a finite relational language and let
  $r_i = \arity(\theta_i)$, $1 \le i \le n$.
  Let $\CC_i$ denote the category $\RslSrqm(\{\theta_i\}, \Boxed\sqsubset)$, $1 \le i \le n$.
  For each $i$ we have that $\RslSrqm(\{\theta_i\}, \Boxed\sqsubset) = \RslSrqm(r_i)$,
  so $\RslSrqm(\{\theta_i\}, \Boxed\sqsubset)$ has the dual Ramsey
  property (Theorem~\ref{dthms.thm.erst}).
  
  For an object $\calA = (A, \theta_1^\calA, \ldots, \theta_n^\calA, \Boxed{\sqsubset^\calA}) \in
  \Ob(\RslSrqm(\Theta, \Boxed{\sqsubset}))$ let $\calA^{(i)} = (A, \theta_i^\calA, \Boxed{\sqsubset^\calA}) \in \Ob(\CC_i)$.
  As we have just seen each $\CC_i$ has the dual Ramsey property, so the product category
  $\CC_1 \times \ldots \times \CC_n$ has the dual Ramsey property
  by Theorem~\ref{sokic-prod}. Let $\DD$ be the following subcategory of $\CC_1 \times \ldots \times \CC_n$:
  \begin{itemize}
  \item
    every $\calA = (A, \theta_1^\calA, \ldots, \theta_n^\calA, \Boxed{\sqsubset^\calA}) \in
    \Ob(\RslSrqm(\Theta, \Boxed{\sqsubset}))$ gives rise to an object
    $\overline \calA = (\calA^{(1)}, \ldots, \calA^{(n)})$ of $\DD$, and these are the only objects in~$\DD$;
  \item
    every morphism $f : \calA \to \calB$ in $\RslSrqm(\Theta, \Boxed{\sqsubset})$ gives rise to a morphism
    $\overline f = (f, \ldots, f) : \overline \calA \to \overline \calB$ in $\DD$, and these are the only morphisms in~$\DD$.
  \end{itemize}
  Clearly, the categories $\DD$ and $\RslSrqm(\Theta, \Boxed{\sqsubset})$ are isomorphic,
  so in order to complete the proof of the lemma it suffices to show that $\DD$ has the dual Ramsey property.

  As $\DD$ is a subcategory of $\CC_1 \times \ldots \times \CC_n$ and the latter one has the dual Ramsey property,
  following Theorem~\ref{nrt.thm.1} it suffices to show that $\DD^\op$ is closed for binary diagrams
  in $(\CC_1 \times \ldots \times \CC_n)^\op$.

  Let $F : \Delta \to \DD^\op$ be a binary diagram in $\DD^\op$ where the top row consists of copies of
  $\overline\calB \in \Ob(\DD)$ and the bottom row consists of copies of $\overline\calA \in \Ob(\DD)$
  for some $\calA = (A, \theta_1^\calA, \ldots, \theta_n^\calA, \Boxed{\sqsubset^\calA})$ and
  $\calB = (B, \theta_1^\calB, \ldots, \theta_n^\calB, \Boxed{\sqsubset^\calB})$.
  Assume that $F$ is consistent in $(\CC_1 \times \ldots \times \CC_n)^\op$
  and let $(\calC_1, \ldots, \calC_n)$ together with the morphisms $q_1, \ldots, q_k$ be a compatible cone
  in $(\CC_1 \times \ldots \times \CC_n)^\op$ over~$F$:
  $$
  \xymatrix{
    & & (\calC_1, \ldots, \calC_n)
  \\
    \overline \calB \ar@{<-}[urr]^{q_1} & \overline \calB \ar@{<-}[ur]_(0.6){q_i} & \ldots & \overline \calB \ar@{<-}[ul]^(0.6){q_j} & \overline \calB \ar@{<-}[ull]_{q_k}
  \\
    \overline \calA \ar@{<-}[u] \ar@{<-}[ur] & \overline \calA \ar@{<-}[urr]_(0.4){\overline v} \ar@{<-}[u]_(0.65){\overline u} & \ldots & \overline \calA \ar@{<-}[ur] \ar@{<-}[ul] & \DD
    \save "2,1"."3,5"*[F]\frm{} \restore
  }
  $$
  Let $\calC_i = (C_i, \theta_i^{\calC_i}, \Boxed{\sqsubset^i})$ and $q_i = (q_i^1, \ldots, q_i^n)$ where
  $q_i^s : \calC_s \to \calB^{(s)}$ is a strong rigid quotient map. Without loss of generality we can assume
  that $C_1, C_2, \ldots, C_n$ are pairwise disjoint sets.
  Let $\calD = (D, \theta_1^\calD, \ldots, \theta_n^\calD, \Boxed{\sqsubset^\calD})$ where
  \begin{itemize}
  \item
    $D = C_1 \union C_2 \union \ldots \union C_n$,
  \item
    $\theta_i^\calD = \Delta_{D, r_i} \union \theta_i^{\calC_i}$, $1 \le i \le n$, and
  \item
    $\sqsubset^\calD$ is the linear order on $D$ obtained by concatenating the linear orders
    $\sqsubset^1$, $\sqsubset^2$, \ldots, $\sqsubset^n$; in other words, $\sqsubset^\calD$ is the unique linear order on $D$ such that
    $\restr{\Boxed{\sqsubset^\calD}}{C_i} = \Boxed{\sqsubset^i}$, $1 \le i \le n$, and
    if $x \in C_i$ and $y \in C_j$ where $i < j$ then $x \mathrel{\sqsubset^\calD} y$.
  \end{itemize}
  Clearly, $\calD \in \Ob(\RslSrqm(\Theta, \Boxed\sqsubset))$, so $\overline \calD \in \Ob(\DD)$.

  For each morphism $q_i = (q_i^1, \ldots, q_i^n)$ let $\phi_i : D \to B$ be the following mapping:
  $$
    \phi_i(x) = \begin{cases}
      q_i^1(x), & x \in C_1\\
      q_i^2(x), & x \in C_2\\
      \vdots\\
      q_i^n(x), & x \in C_n.
    \end{cases}
  $$
  Let us show that $\phi_i : \calD \to \calB$ is a strong rigid quotient map, $1 \le i \le k$. It is easy to see that each $\phi_i$ is
  a homomorphism $(D, \theta_1^\calD, \ldots, \theta_n^\calD) \to (B, \theta_1^\calB, \ldots, \theta_n^\calB)$.
  Take any $s$ and any $(x_1, x_2, \ldots, x_{r_s}) \in \theta^\calD_s \setminus \Delta_{D, r_s}$. Then $(x_1, x_2, \ldots, x_{r_s}) \in \theta^{\calC_s}_s$ whence
  $\{x_1, x_2, \ldots, x_{r_s}\} \subseteq C_s$. But then
  $$
    (\phi_i(x_1), \phi_i(x_2), \ldots, \phi_i(x_{r_s})) = (q_i^s(x_1), q_i^s(x_2), \ldots, q_i^s(x_{r_s})) \in \theta^\calB_s
  $$
  because $q_i^s : \calC_s \to \calB^{(s)}$ is a homomorphism.

  Next, take any $s \in \{1, 2, \ldots, n\}$ and let us show that $\hat \phi_i : (\theta^\calD_s, \Boxed{\sqsalidx^\calD}) \to
  (\theta^\calB_s, \Boxed{\sqsalidx^\calB})$ is a rigid surjection.
  
  \medskip
  
  Case $1^\circ$: $s = 1$. The construction of $\sqsubset^\calD$ then ensures that
  $$
    \min \hat \phi_i^{-1}\big((x_1, x_2, \ldots, x_{r_1})\big) = \min (\hat q_i^1)^{-1}\big((x_1, x_2, \ldots, x_{r_1})\big)
  $$
  for every $(x_1, x_2, \ldots, x_{r_1}) \in \theta^\calB_1$, so
  \begin{align*}
    \min \hat \phi_i^{-1}\big((x_1, x_2, \ldots, x_{r_1})\big)
      & = \min (\hat q_i^1)^{-1}\big((x_1, x_2, \ldots, x_{r_1})\big)\\
      & \mathrel{\sqsalidx^\calD} \min (\hat q_i^1)^{-1}\big((y_1, y_2, \ldots, y_{r_1})\big)\\
      & = \min \hat \phi_i^{-1}\big((y_1, y_2, \ldots, y_{r_1})\big)
  \end{align*}
  for all $(x_1, x_2, \ldots, x_{r_1}), (y_1, y_2, \ldots, y_{r_1}) \in \theta^\calB_1$ satisfying
  $(x_1, x_2, \ldots, x_{r_1}) \mathrel{\sqsalidx^\calB} (y_1, y_2, \ldots, y_{r_1})$
  because $q_i^1$ is a strong rigid quotient map.
  
  \medskip

  Case $2^\circ$: $s > 1$. Take $(x_1, x_2, \ldots, x_{r_s}), (y_1, y_2, \ldots, y_{r_s}) \in \theta^\calB_s$
  such that $(x_1, x_2, \ldots, x_{r_s}) \mathrel{\sqsalidx^\calB} (y_1, y_2, \ldots, y_{r_s})$.
  
  \medskip
  
  Case $2.1^\circ$: $(x_1, x_2, \ldots, x_{r_s}) \in \Delta_{B, r_s}$. Then,
  by the construction of~$\sqsubset^\calD$,
  \begin{align*}
    &\min \hat \phi_i^{-1}\big((x_1, x_2, \ldots, x_{r_s})\big) = \min (\hat q_i^1)^{-1}\big((x_1, x_2, \ldots, x_{r_s})\big) \text{ and}\\
    &\min \hat \phi_i^{-1}\big((y_1, y_2, \ldots, y_{r_s})\big) = \min (\hat q_i^t)^{-1}\big((y_1, y_2, \ldots, y_{r_s})\big),
  \end{align*}
  where $t = 1$ if $(y_1, y_2, \ldots, y_{r_s}) \in \Delta_{B, r_s}$, or $t = s$ if
  $(y_1, y_2, \ldots, y_{r_s}) \in \theta^{\calC_s}_s \setminus \Delta_{B, r_s}$.
  If $t = 1$ we are done because $q_i^1$ is a strong rigid quotient map. If, however, $t = s$, we are done
  by the definition of $\sqsalidx^\calD$.

  \medskip

  Case $2.2^\circ$: $(x_1, x_2, \ldots, x_{r_s}) \notin \Delta_{B, r_s}$. Then
  $(y_1, y_2, \ldots, y_{r_s}) \notin \Delta_{B, r_s}$ by definition of~$\sqsalidx^\calD$.
  Therefore,
  \begin{align*}
    &\min \hat \phi_i^{-1}\big((x_1, x_2, \ldots, x_{r_s})\big) = \min (\hat q_i^s)^{-1}\big((x_1, x_2, \ldots, x_{r_s})\big) \text{ and}\\
    &\min \hat \phi_i^{-1}\big((y_1, y_2, \ldots, y_{r_s})\big) = \min (\hat q_i^s)^{-1}\big((y_1, y_2, \ldots, y_{r_s})\big),
  \end{align*}
  and the claim follows because $q_i^s$ is a strong rigid quotient map.

  \medskip
  
  Therefore, $\phi_i : \calD \to \calB$ is a strong rigid quotient map for each~$i$,
  whence follows that $\overline \phi_i : \overline \calD \to \overline \calB$ is a morphism in $\DD$ for each~$i$.
  To complete the proof we still have to show that
  $\overline u \circ \overline \phi_i  = \overline v \circ \overline \phi_j$ whenever
  $\overline u \circ q_i = \overline v \circ q_j$.
  Assume that $\overline u \circ q_i = \overline v \circ q_j$. Take any $x \in D$. Then $x \in C_s$ for some~$s$, so
  $$
    u \circ \phi_i(x) = u \circ q_i^s(x) = v \circ q_j^s(x) = v \circ \phi_j(x),
  $$
  because $\overline u \circ q_i = \overline v \circ q_j$ means that $u \circ q_i^t = v \circ q_j^t$ for each~$t$.
  Therefore, $\overline u \circ \overline \phi_i  = \overline v \circ \overline \phi_j$.
  
  This concludes the proof in case $\Theta$ is a finite relational language.

  \bigskip
  
  Part~II. Assume now that $\Theta$ is an arbitrary relational language satisfying
  $\Boxed\sqsubset \notin \Theta$, and take any $k \ge 2$ and
  $\calA, \calB \in \Ob(\RslSrqm(\Theta, \Boxed{\sqsubset}))$ such that there is a strong rigid
  quotient map $\calB \to \calA$.
  
  Since $\calB$ is a finite $\Theta$-erst, $\theta^\calB = \0$ for every $\theta \in \Theta$ such that $\arity(\theta) > |B|$.
  Moreover, on a finite set there are only finitely many relations whose arities do not exceed $|B|$.
  Therefore, there exists a finite $\Sigma \subseteq \Theta$ such that
  for every $\theta \in \Theta \setminus \Sigma$ we have $\theta^\calB = \0$ or $\theta^\calB = \sigma^\calB$
  for some $\sigma \in \Sigma$. Since there is a strong rigid quotient map $\calB \to \calA$,
  we have the following:
  \begin{itemize}
  \item
    if $\theta^\calB = \0$ for some $\theta \in \Theta \setminus \Sigma$ then $\theta^\calA = \0$, and
  \item
    if $\theta^\calB = \sigma^\calB$ for some $\theta \in \Theta \setminus \Sigma$ and $\sigma \in \Sigma$ then
    $\theta^\calA = \sigma^\calA$.
  \end{itemize}
  
  The category $\RslSrqm(\Sigma, \Boxed{\sqsubset})$ has the dual Ramsey property because $\Sigma$ is finite,
  (Part~I), so there is a $\calC = (C, \Sigma^\calC, \Boxed{\sqsubset^\calC}) \in
  \Ob(\RslSrqm(\Sigma, \Boxed{\sqsubset}))$ such that
  $$
    \calC \longrightarrow (\reduct \calB {\Sigma \union \{\sqsubset\}})^{\reduct \calA {\Sigma \union \{\sqsubset\}}}_k
  $$
  in $\RslSrqm(\Sigma, \Boxed{\sqsubset})^\op$. Define $\calC^* = (C, \Theta^{\calC^*}, \Boxed{\sqsubset^{\calC^*}})
  \in \Ob(\RslSrqm(\Theta, \Boxed{\sqsubset}))$ as follows:
  \begin{itemize}
  \item
    \Boxed{\sqsubset^{\calC^*}} = \Boxed{\sqsubset^{\calC}};
  \item
    if $\sigma \in \Sigma$ let $\sigma^{\calC^*} = \sigma^{\calC}$;
  \item
    if $\theta \in \Theta \setminus \Sigma$ and $\theta^\calB = \0$ let $\theta^{\calC^*} = \0$;
  \item
    if $\theta \in \Theta \setminus \Sigma$ and $\theta^\calB = \sigma^\calB$ for some
    $\sigma \in \Sigma$, let $\theta^{\calC^*} = \sigma^{\calC^*}$.
  \end{itemize}
  Clearly, $\calC^*$ is a $\Theta$-erst and $\calC^* \longrightarrow (\calB)^\calA_k$
  in $\RslSrqm(\Theta, \Boxed{\sqsubset})^\op$ because
  \begin{align*}
    & \hom_{\RslSrqm(\Sigma, \Boxed{\sqsubset})}(\calC, \reduct \calA {\Sigma \union \{\sqsubset\}}) =
      \hom_{\RslSrqm(\Theta, \Boxed{\sqsubset})}(\calC^*, \calA), \text{ and}\\
    & \hom_{\RslSrqm(\Sigma, \Boxed{\sqsubset})}(\calC, \reduct \calB {\Sigma \union \{\sqsubset\}}) =
      \hom_{\RslSrqm(\Theta, \Boxed{\sqsubset})}(\calC^*,\calB).
  \end{align*}
  This concludes the proof.
\end{proof}

\begin{DEF}
  Let $\calA = (A, \Theta^\calA, \Boxed{\sqsubset^\calA})$ and $\calB = (B, \Theta^\calB, \Boxed{\sqsubset^\calB})$ be two
  linearly ordered reflexive $\Theta$-structures. A homomorphism $f : (A, \Theta^\calA) \to (B, \Theta^\calB)$ is a
  \emph{strong rigid quotient map of structures} if the following holds for every $\theta \in \Theta$:
  \begin{itemize}
  \item
    $\hat f : (\theta^\calA, \Boxed{\sqsalidx^\calA}) \to (\theta^\calB, \Boxed{\sqsalidx^\calB})$
    is a rigid surjection; and
  \item
    for every $(x_1, x_2, \ldots, x_r) \in \theta^\calA$,
    if $\restr{f}{\{x_1, x_2, \ldots, x_r\}}$ is not a constant map
    then $x_i \mathrel{\sqsubset^\calA} x_j \Rightarrow
    f(x_i) \mathrel{\sqsubset^\calB} f(x_j)$ for all $i$ and $j$.
  \end{itemize}
\end{DEF}

Let $\RELSrqm(\Theta, \Boxed\sqsubset)$ be the category whose objects are all finite linearly ordered reflexive
$\Theta$-structures and whose morphisms are strong rigid quotient maps of structures. Our final result is a dual
version of the Ne\v set\v ril-R\"odl Theorem.

\begin{THM}[The Dual Ne\v set\v ril-R\"odl Theorem (restricted form)]
  Let $\Theta$ be a relational language and let $\Boxed\sqsubset \notin \Theta$ be a binary relational symbol. Then
  $\RELSrqm(\Theta, \Boxed{\sqsubset})$ has the dual Ramsey property.
\end{THM}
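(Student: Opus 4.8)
The plan is to reduce the statement to Proposition~\ref{dthms.prop.theta-erst} by ``straightening out'' each relation into its type-components, so that an arbitrary finite linearly ordered reflexive structure becomes a reflexive structure with a linear extension over a larger language. Concretely, I would let $\Theta^*$ be the relational language whose symbols are the pairs $(\theta, \sigma)$ where $\theta \in \Theta$ and $\sigma$ is a total quasiorder on $\{1, \ldots, \arity(\theta)\}$ with $|\{1, \ldots, \arity(\theta)\} / \Boxed{\equiv_\sigma}| \ge 2$, the arity of $(\theta, \sigma)$ being that number of classes. For $\calA = (A, \Theta^\calA, \Boxed{\sqsubset^\calA})$ I would set $E(\calA) = (A, (\rho_{\theta, \sigma}^\calA)_{(\theta,\sigma)}, \Boxed{\sqsubset^\calA})$ with
$$
  \rho_{\theta, \sigma}^\calA = \Delta_{A, s} \union \{\mat(\overline x) : \overline x \in \theta^\calA \text{ and } \tp(\overline x) = \sigma\}, \qquad s = \arity(\theta, \sigma).
$$
Since every matrix tuple is strictly $\sqsubset^\calA$-increasing, each $\rho_{\theta,\sigma}^\calA$ is reflexive and $\sqsubset^\calA$ is a linear extension of it, so $E(\calA)$ is a $\Theta^*$-erst. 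The goal is to show that $E$, acting as the identity on underlying maps, is an \emph{isomorphism of categories} $\RELSrqm(\Theta, \Boxed\sqsubset) \to \RslSrqm(\Theta^*, \Boxed\sqsubset)$; the theorem then follows at once from Proposition~\ref{dthms.prop.theta-erst} together with the fact that isomorphic categories share the dual Ramsey property. (If $\Theta$ has no symbol of arity $\ge 2$ then $\Theta^*$ is empty and, reflexive unary relations being trivial, $\RELSrqm(\Theta, \Boxed\sqsubset)$ is isomorphic to $\CHrs$, which has the dual Ramsey property by Example~\ref{cerp.ex.FDRT-ch}.)

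On objects $E$ is a bijection: its inverse sends a $\Theta^*$-erst to the structure in which, for each $\theta$, one sets $\theta^\calA = \Delta_{A, \arity(\theta)} \union \bigcup_\sigma \{\tup(\sigma, \overline b) : \overline b \in \rho_{\theta, \sigma}^\calA \setminus \Delta_{A, s}\}$. The identities from~\eqref{dthms.eq.tup-mat} ($\tp(\tup(\sigma, \overline b)) = \sigma$, $\mat(\tup(\sigma, \overline b)) = \overline b$, $\tup(\tp(\overline a), \mat(\overline a)) = \overline a$) show that the two assignments are mutually inverse: the tuples of a fixed type $\sigma$ in $\theta^\calA$ are in $\mat/\tup$-bijection with the non-diagonal part of $\rho_{\theta, \sigma}^\calA$, and distinct types produce tuples of distinct types, so there is no collision. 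This already yields bijectivity on objects and faithfulness.

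The substance of the argument is the two-way correspondence of morphisms, i.e.\ that a map $f : A \to B$ is a strong rigid quotient map of structures $\calA \to \calB$ if and only if it is a strong rigid quotient map of the erst's $E(\calA) \to E(\calB)$. For the forward direction I would fix $(\theta, \sigma) \in \Theta^*$ and apply Lemma~\ref{dthms.lem.AUX} with $\theta^\calA, \theta^\calB$ in the roles of $\theta, \theta'$: its three hypotheses are exactly the homomorphism clause, the monotonicity clause, and the surjectivity of $\hat f$ on $\theta^\calA$ built into the definition of a strong rigid quotient map of structures, and it delivers that $\hat f$ restricts to a surjection on the matrix part of $\rho_{\theta,\sigma}$ whose $\sqsal$-minimal preimages match the $\mat$-images of the minimal preimages at the $\theta$-level. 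One must then fold in the diagonal: the constant type is $\triangleleft$-least, so every constant tuple precedes every matrix tuple in $\sqsal$; on the diagonal $\hat f$ is just $f$, a rigid surjection of chains by Lemma~\ref{dthms.lem.strong}, and the minimum of a constant-valued preimage sits on the diagonal by Lemma~\ref{dthms.lem.strong}~$(a)$. Assembling the $\triangleleft$-ordered type blocks on top of the diagonal block via Lemma~\ref{dthm.lem.tp-mat-facts} shows $\hat f : (\rho_{\theta,\sigma}^\calA, \Boxed\sqsal) \to (\rho_{\theta,\sigma}^\calB, \Boxed\sqsal)$ is a rigid surjection. For the backward direction I would reverse this: Lemma~\ref{dthms.lem.strong}~$(b)$ recovers that $f$ is a rigid surjection of chains, and by testing whether a matrix tuple maps into the diagonal or stays a matrix (using Lemma~\ref{dthm.lem.tp-mat-facts-2}) one recovers both that $f$ is a homomorphism and that it satisfies the monotonicity clause; the key point is that a strong rigid quotient map can only coarsen types, so for $\overline p \in \theta^\calB$ the $\sqsal$-minimal preimage in $\theta^\calA$ has the \emph{same} type as $\overline p$ and is read off via $\tup$ from the matrix level, whence $\hat f$ is a rigid surjection of $(\theta^\calA, \Boxed\sqsal) \to (\theta^\calB, \Boxed\sqsal)$ by Lemma~\ref{dthm.lem.tp-mat-facts}.

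The main obstacle is precisely this morphism bookkeeping: Lemma~\ref{dthms.lem.AUX} performs the per-type work, but one must verify that the block structure of $\sqsal$ (the constant type lying below every genuine type, and all blocks linearly ordered by $\triangleleft$) lets the per-type rigid surjections be stitched into a single rigid surjection on the whole matrix relation and, conversely, be read back off it. The monotonicity clause in the definition of morphisms of $\RELSrqm(\Theta, \Boxed\sqsubset)$ is exactly what keeps this clean: it forbids ``partial collapse'' of a tuple, forcing $f$ to be either constant or strictly order-preserving on each $\theta$-tuple, so that a type can only drop to the constant type (absorbed by the diagonal) and never land in an intermediate configuration outside the matrix relation. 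Granting the isomorphism, the dual Ramsey property of $\RELSrqm(\Theta, \Boxed\sqsubset)$ is immediate from Proposition~\ref{dthms.prop.theta-erst}.
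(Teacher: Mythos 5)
Your proposal follows essentially the same route as the paper: the paper also passes to the expanded language of pairs $\theta/\sigma$ (one symbol per type), sends $\theta^\calA$ to $\Delta \cup \{\mat(\overline a) : \tp(\overline a)=\sigma\}$, proves this is an isomorphism of categories onto $\RslSrqm(X_\Theta,\Boxed\sqsubset)$ using \eqref{dthms.eq.tup-mat} on objects and Lemmas~\ref{dthm.lem.tp-mat-facts}, \ref{dthm.lem.tp-mat-facts-2}, \ref{dthms.lem.AUX} and \ref{dthms.lem.strong} for the two-way morphism verification (with exactly your case split: constant--constant, constant--nonconstant, same type, different types), and then invokes Proposition~\ref{dthms.prop.theta-erst}. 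Your only deviations are cosmetic (omitting the trivial one-class quasiorder from the new language and flagging the degenerate unary case), so the plan is correct and matches the paper's proof.
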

\begin{proof}
  Fix a relational language $\Theta$ such that $\Boxed\sqsubset \notin \Theta$. Let
  $$
    X_\Theta = \big\{ \theta/\sigma : \theta \in \Theta \text{ and } \sigma \text{ is a total quasiorder on } \{1, 2, \ldots, \arity(\theta)\} \big\}
  $$
  be a relational language where $\theta/\sigma$ is a new relational symbol (formally, a pair $(\theta, \sigma)$) such that
  $$
    \arity(\theta/\sigma) = |\{1, 2, \ldots, \arity(\theta)\} / \Boxed{\equiv_\sigma}|.
  $$
  We are going to show that the categories $\RELSrqm(\Theta, \Boxed{\sqsubset})$ and
  $\RslSrqm(X_\Theta, \Boxed{\sqsubset})$ are isomorphic.
  The dual Ramsey property for $\RELSrqm(\Theta, \Boxed{\sqsubset})$ then follows directly from
  Proposition~\ref{dthms.prop.theta-erst}.

  For $\calA = (A, \Theta^\calA, \Boxed{\sqsubset^\calA}) \in \Ob(\RELSrqm(\Theta, \Boxed{\sqsubset}))$
  define a $\calA^\dagger = (A, X_\Theta^{\calA^\dagger}, \Boxed{\sqsubset^{\calA^\dagger}})$ as follows:
  \begin{align*}
    \Boxed{\sqsubset^{\calA^\dagger}} &= \Boxed{\sqsubset^{\calA}},\\
    (\theta/\sigma)^{\calA^\dagger}  &= \Delta_{A, \arity(\theta/\sigma)} \union
        \{\mat(\overline a): \overline a \in \theta^\calA \text{ and } \tp(\overline a) = \sigma \}.
  \end{align*}
  On the other hand, take any
  $\calB = (B, X_\Theta^\calB, \Boxed{\sqsubset^\calB}) \in \Ob(\RslSrqm(X_\Theta, \Boxed\sqsubset))$
  and define $\calB^* = (B, \Theta^{\calB^*}, \Boxed{\sqsubset^{\calB^*}}) \in \Ob(\RELSrqm(\Theta, \Boxed\sqsubset))$ as follows:
  \begin{align*}
    \Boxed{\sqsubset^{\calB^*}} &= \Boxed{\sqsubset^{\calB}},\\
    \theta^{\calB^*} &= \{\tup(\sigma, \overline a) : \sigma \text{ is a total quasiorder on } \{1, 2, \ldots, \arity(\theta)\}\\
                &\qquad\qquad\qquad\qquad\text{and } \overline a \in (\theta/\sigma)^\calB\}.
  \end{align*}
  Consider the functors
  $$
    F : \RELSrqm(\Theta, \Boxed{\sqsubset}) \to \RslSrqm(X_\Theta, \Boxed{\sqsubset}) : \calA \mapsto \calA^\dagger : f \mapsto f
  $$
  and
  $$
    G : \RslSrqm(X_\Theta, \Boxed{\sqsubset}) \to \RELSrqm(\Theta, \Boxed{\sqsubset}) : \calB \mapsto \calB^* : f \mapsto f.
  $$
  Because of \eqref{dthms.eq.tup-mat} we have that $(\calA^\dagger)^* = \calA$
  and $(\calB^*)^\dagger = \calB$ for all $\calA \in \Ob(\RELSrqm(\Theta, \Boxed{\sqsubset}))$ and
  all $\calB \in \Ob(\RslSrqm(X_\Theta, \Boxed{\sqsubset}))$. Hence, $F$ and $G$ are mutually inverse functors,
  so $\RELSrqm(\Theta, \Boxed{\sqsubset})$ and $\RslSrqm(X_\Theta, \Boxed{\sqsubset})$ are isomorphic categories.
  However, we still have to show that $F$ and $G$ are well defined.
  Clearly, both functors are well defined on objects.

  Let us show that $F$ is well defined on morphisms.
  Take any morphism $f : \calA \to \calB$ in $\RELSrqm(\Theta, \Boxed{\sqsubset})$ where
  $\calA = (A, \Theta^\calA, \Boxed{\sqsubset^\calA})$ and $\calB = (B, \Theta^\calB, \Boxed{\sqsubset^\calB})$.

  To see that $f : (A, X_\Theta^{\calA^\dagger}) \to (B, X_\Theta^{\calB^\dagger})$ is a homomorphism, take any
  $\theta \in \Theta$, any total quasiorder $\sigma$ on $\{1, 2, \ldots, \arity(\theta)\}$ and any
  $
    (x_1, x_2, \ldots, x_r) \in (\theta/\sigma)^{\calA^\dagger} \setminus \Delta_{A,r}
  $, where $r = \arity(\theta/\sigma)$.
  Then there exists an $\overline a \in \theta^\calA$ such that $\sigma = \tp(\overline a)$ and
  $(x_1, x_2, \ldots, x_r) = \mat(\overline a)$.
  If $f(x_1) = f(x_2) = \ldots = f(x_r)$ we are done. Assume, therefore, that $\restr{f}{\{x_1, x_2, \ldots, x_r\}}$ is not
  a constant map.
  Because $f$ is a homomorphism of $\Theta$-structures, $\hat f(\overline a) \in \theta^\calB$.
  We also know that $\tp(\hat f(\overline a)) = \tp(\overline a) = \sigma$ (Lemma~\ref{dthm.lem.tp-mat-facts-2}), so
  $\mat(\hat f(\overline a)) \in (\theta/\sigma)^{\calB^\dagger}$. By using Lemma~\ref{dthm.lem.tp-mat-facts-2} again
  we have that $\mat(\hat f(\overline a))
  = \hat f(\mat(\overline a)) = \hat f(x_1, x_2, \ldots, x_r) = (f(x_1), f(x_2), \ldots, f(x_r))$.

  Next, let us show that $\restr{\hat f}{(\theta/\sigma)^{\calA^\dagger}} : ((\theta/\sigma)^{\calA^\dagger}, \Boxed{\sqsalidx^{\calA^\dagger}}) \to
  ((\theta/\sigma)^{\calB^\dagger}, \Boxed{\sqsalidx^{\calB^\dagger}})$ is a rigid surjection for every
  $\theta \in \Theta$ and every total quasiorder $\sigma$ on $\{1, 2, \ldots, \arity(\theta)\}$.
  For notational convenience we let $\hat f_\theta = \restr{\hat f}{\theta^\calA}$ and
  $\hat f_{\theta/\sigma} = \restr{\hat f}{(\theta/\sigma)^{\calA^\dagger}}$.
  Take any $\theta \in \Theta$, any total quasiorder $\sigma$ on $\{1, 2, \ldots, \arity(\theta)\}$ and let
  $r = \arity(\theta/\sigma)$. Note, first, that $\hat f_{\theta/\sigma}$ is surjective because $f$ is a quotient map
  (Lemma~\ref{dthms.lem.strong}) and $\tp(\overline a) = \tp(\hat f(\overline a))$ whenever
  $\overline a = (a_1, a_2, \ldots, a_n) \in \theta^\calA$ and $\restr{f}{\{a_1, a_2, \ldots, a_n\}}$
  is not a constant map (Lemma~\ref{dthm.lem.tp-mat-facts-2}).
  Take any $(x_1, x_2, \ldots, x_r)$, $(y_1, y_2, \ldots, y_r) \in (\theta/\sigma)^{\calB^\dagger}$
  such that $(x_1, x_2, \ldots, x_r) \mathrel{\sqsalidx^{\calB^\dagger}} (y_1, y_2, \ldots, y_r)$.
  
  \medskip
  
  Case $1^\circ$. $|\{x_1, x_2, \ldots, x_r\}| = |\{y_1, y_2, \ldots, y_r\}| = 1$.
  
  \medskip
  
  By Lemma~\ref{dthms.lem.strong}~$(a)$ we have that
  $\min \hat f_{\theta/\sigma}^{-1}(x_1, x_1, \ldots, x_1) = (s, s, \ldots, s)$ where $s = \min f^{-1}(x_1)$ and
  $\min \hat f_{\theta/\sigma}^{-1}(y_1, y_1, \ldots, y_1) = (t, t, \ldots, t)$ where $t = \min f^{-1}(y_1)$.
  Since $(x_1, x_1, \ldots, x_1) \mathrel{\sqsalidx^{\calB^\dagger}} (y_1, y_1, \ldots, y_1)$,
  we know that $x_1 \mathrel{\sqsubset^\calB} y_1$, so $s \mathrel{\sqsubset^\calA} t$ because
  $f$ is a rigid surjection $(A, \Boxed{\sqsubset^\calA}) \to (B, \Boxed{\sqsubset^\calB})$.
  But then
  \begin{align*}
    \min \hat f_{\theta/\sigma}^{-1}(x_1, x_1, \ldots, x_1)
        &= (s, s, \ldots, s)\\
        &\mathrel{\sqsalidx^{\calA^\dagger}} (t, t, \ldots, t) = \min \hat f_{\theta/\sigma}^{-1}(y_1, y_1, \ldots, y_1).
  \end{align*}

  \medskip

  Case $2^\circ$. $|\{x_1, x_2, \ldots, x_r\}| = 1$ and $|\{y_1, y_2, \ldots, y_r\}| > 1$.
  
  \medskip
  
  Then $\min \hat f_{\theta/\sigma}^{-1}(x_1, x_1, \ldots, x_1) = (s, s, \ldots, s)$ and
  $\min \hat f_{\theta/\sigma}^{-1}(y_1, y_2, \ldots, y_r) = (t_1, t_2, \ldots, t_r)$
  where $|\{t_1, t_2, \ldots, t_r\}| > 1$, so
  $$
    \tp(\min \hat f_{\theta/\sigma}^{-1}(x_1, x_1, \ldots, x_1))
    \triangleleft \tp(\min \hat f_{\theta/\sigma}^{-1}(y_1, y_2, \ldots, y_r)),
  $$
  whence
  $\min \hat f_{\theta/\sigma}^{-1}(x_1, x_1, \ldots, x_1)
  \mathrel{\sqsalidx^{\calA^\dagger}} \min \hat f_{\theta/\sigma}^{-1}(y_1, y_2, \ldots, y_r)$.

  \medskip
  
  Case $3^\circ$. $|\{x_1, x_2, \ldots, x_r\}| > 1$ and $|\{y_1, y_2, \ldots, y_r\}| > 1$.
  
  \medskip

  Let $(x_1, x_2, \ldots, x_r) = \mat(\overline p)$ and $(y_1, y_2, \ldots, y_r) = \mat(\overline q)$
  for some $\overline p, \overline q \in \theta^\calB$ such that $\tp(\overline p) = \tp(\overline q) = \sigma$.
  Since
  $$
    \mat(\overline p) = (x_1, x_2, \ldots, x_r) \mathrel{\sqsalidx^{\calB^\dagger}} (y_1, y_2, \ldots, y_r) =
    \mat(\overline q)
  $$
  and $\tp(\overline p) = \tp(\overline q)$ we have that
  $\overline p \mathrel{\sqsalidx^\calB} \overline q$ (Lemma~\ref{dthm.lem.tp-mat-facts}).
  Since $\hat f_\theta : (\theta^\calA, \Boxed{\sqsalidx^\calA}) \to (\theta^\calB, \Boxed{\sqsalidx^\calB})$ is a rigid
  surjection, we know that $\min \hat f_\theta^{-1}(\overline p) \mathrel{\sqsalidx^\calA} \min \hat f_\theta^{-1}(\overline q)$.
  On the other hand, $\tp(\min \hat f_\theta^{-1}(\overline p)) = \tp(\overline a) = \tp(\overline p) = \tp(\overline q)
  = \tp(\overline b) = \tp(\min \hat f_\theta^{-1}(\overline q))$ for some $\overline a \in \hat f_\theta^{-1}(\overline p)$ and
  $\overline b \in \hat f_\theta^{-1}(\overline q)$ where the minimum is achieved, so by Lemma~\ref{dthm.lem.tp-mat-facts}
  we conclude that
  $$
    \mat(\min \hat f_\theta^{-1}(\overline p)) \mathrel{\sqsalidx^{\calA^\dagger}}
    \mat(\min \hat f_\theta^{-1}(\overline q)).
  $$
  By Lemma~\ref{dthms.lem.AUX} we finally get
  $\min \hat f_{\theta/\sigma}^{-1}(\mat(\overline p))
  \mathrel{\sqsalidx^{\calA^\dagger}} \min \hat f_{\theta/\sigma}^{-1}(\mat(\overline q))$, that is,
  $
    \min \hat f_{\theta/\sigma}^{-1}(x_1, x_2, \ldots, x_r) \mathrel{\sqsalidx^{\calA^\dagger}}
    \min \hat f_{\theta/\sigma}^{-1}(y_1, y_2, \ldots, y_r)
  $.
  This concludes the proof of Case~$3^\circ$ and the proof that the functor $F$ is well defined on morphisms.

  \medskip

  Let us show that $G$ is well defined on morphisms.
  Take any morphism $f : \calA \to \calB$ in $\RslSrqm(X_\Theta, \Boxed{\sqsubset})$ where
  $\calA = (A, X_\Theta^\calA, \Boxed{\sqsubset^\calA})$ and $\calB = (B, X_\Theta^\calB, \Boxed{\sqsubset^\calB})$.

  Let us first show that $f : (A, \Theta^{\calA^*}) \to (B, \Theta^{\calB^*})$ is a homomorphism.
  Take any $\theta \in \Theta$ and any $\overline x = (x_1, x_2, \ldots, x_n) \in \theta^{\calA^*}$.
  If $\restr{f}{\{x_1, x_2, \ldots, x_n\}}$ is a constant map we are done. Assume, therefore, that this is not the case.
  Then $\overline x = \tup(\sigma, \overline a)$ for some $\sigma$ and some $\overline a \in (\theta / \sigma)^\calA$.
  Because $f : \calA \to \calB$ is a homomorphism, $\hat f(\overline a) \in (\theta/\sigma)^\calB$, whence
  $\tup(\sigma, \hat f(\overline a)) \in \theta^{\calB^*}$.
  Therefore, $\hat f(\overline x) = \hat f(\tup(\sigma, \overline a)) = \tup(\sigma, \hat f(\overline a)) \in \theta^{\calB^*}$,
  using Lemma~\ref{dthm.lem.tp-mat-facts-2} for the second equality.
  
  \medskip
  
  Next, let us show that for every $\theta \in \Theta$ and every $(x_1, x_2, \ldots, x_n) \in \theta^{\calA^*}$,
  if $\restr{f}{\{x_1, x_2, \ldots, x_n\}}$ is not a constant map
  then $x_i \mathrel{\sqsubset^{\calA^*}} x_j \Rightarrow
  f(x_i) \mathrel{\sqsubset^{\calB^*}} f(x_j)$ for all $i$ and $j$.
  Take any $\theta \in \Theta$, any $\overline x = (x_1, x_2, \ldots, x_n) \in \theta^{\calA^*}$ and assume that
  $\restr{f}{\{x_1, x_2, \ldots, x_n\}}$ is not a constant map. By definition of $\theta^{\calA^*}$ we have that
  $\overline x = \tup(\sigma, \overline a)$ for some $\sigma$ and some $\overline a \in (\theta / \sigma)^\calA$.
  Clearly, $\mat(\overline x) = \overline a$. Assume that $x_i \mathrel{\sqsubset^{\calA^*}} x_j$.
  Then $x_i \mathrel{\sqsubset^{\calA}} x_j$, so
  $$
    \overline a = \mat(\overline x) = (\ldots, x_i, \ldots, x_j, \ldots) \in (\theta / \sigma)^\calA.
  $$
  Because $f : \calA \to \calB$ is a homomorphism,
  $$
    (\ldots, f(x_i), \ldots, f(x_j), \ldots) \in (\theta / \sigma)^\calB,
  $$
  so $f(x_i) \mathrel{\sqsubset^{\calB}} f(x_j)$, or, equivalently, $f(x_i) \mathrel{\sqsubset^{\calB^*}} f(x_j)$.

  Finally, let us show that $\restr{\hat f}{\theta^{\calA^*}} : (\theta^{\calA^*}, \Boxed{\sqsalidx^{\calA^*}}) \to
  (\theta^{\calB^*}, \Boxed{\sqsalidx^{\calB^*}})$ is a rigid surjection for every $\theta \in \Theta$.
  For notational convenience, this time we let $\hat f_\theta = \restr{\hat f}{\theta^{\calA^*}}$ and
  $\hat f_{\theta/\sigma} = \restr{\hat f}{(\theta/\sigma)^{\calA}}$.
  
  Note, first, that $\hat f_\theta : \theta^{\calA^*} \to \theta^{\calB^*}$ is surjective because so is
  $\hat f_{\theta/\sigma} : (\theta/\sigma)^{\calA} \to (\theta/\sigma)^{\calB}$ for every~$\sigma$.
  Take any $\overline x, \overline y \in \theta^{\calB^*}$ such that
  $\overline x \mathrel{\sqsalidx^{\calB^*}} \overline y$. Let
  $\overline x = (x_1, x_2, \ldots, x_n)$ and $\overline y = (y_1, y_2, \ldots, y_n)$.

  \medskip
  
  Case $1^\circ$. $|\{x_1, x_2, \ldots, x_n\}| = |\{y_1, y_2, \ldots, y_n\}| = 1$.
  
  \medskip
  
  By Lemma~\ref{dthms.lem.strong}~$(a)$ we have that
  $\min \hat f_\theta^{-1}(x_1, x_1, \ldots, x_1) = (s, s, \ldots, s)$ where $s = \min f^{-1}(x_1)$ and
  $\min \hat f_\theta^{-1}(y_1, y_1, \ldots, y_1) = (t, t, \ldots, t)$ where $t = \min f^{-1}(y_1)$.
  Since $(x_1, x_1, \ldots, x_1) \mathrel{\sqsalidx^{\calB^*}} (y_1, y_1, \ldots, y_1)$,
  we know that $x_1 \mathrel{\sqsubset^\calB} y_1$, so $s \mathrel{\sqsubset^\calA} t$ because
  $f$ is a rigid surjection $(A, \Boxed{\sqsubset^\calA}) \to (B, \Boxed{\sqsubset^\calB})$.
  But then
  \begin{align*}
    \min \hat f_\theta^{-1}(x_1, x_1, \ldots, x_1)
      &= (s, s, \ldots, s)\\
      &\mathrel{\sqsalidx^{\calA^*}} (t, t, \ldots, t) = \min \hat f_\theta^{-1}(y_1, y_1, \ldots, y_1).
  \end{align*}

  \medskip

  Case $2^\circ$. $|\{x_1, x_2, \ldots, x_n\}| = 1$ and $|\{y_1, y_2, \ldots, y_n\}| > 1$.
  
  \medskip
  
  Then $\min \hat f_\theta^{-1}(x_1, x_1, \ldots, x_1) = (s, s, \ldots, s)$ and
  $\min \hat f_\theta^{-1}(y_1, y_2, \ldots, y_n) = (t_1, t_2, \ldots, t_n)$ where $|\{t_1, t_2, \ldots, t_n\}| > 1$, so
  $$
    \tp(\min \hat f_\theta^{-1}(x_1, x_1, \ldots, x_1)) \triangleleft \tp(\min \hat f_\theta^{-1}(y_1, y_2, \ldots, y_n)),
  $$
  whence
  $\min \hat f_\theta^{-1}(x_1, x_1, \ldots, x_1) \mathrel{\sqsalidx^{\calA^*}} \min \hat f_\theta^{-1}(y_1, y_2, \ldots, y_n)$.

  \medskip
  
  Case $3^\circ$. $|\{x_1, x_2, \ldots, x_n\}| > 1$ and $|\{y_1, y_2, \ldots, y_n\}| > 1$.
  
  \medskip
  
  By definition of $\theta^{\calB^*}$ we have that
  $\overline x = \tup(\sigma, \overline a)$ for some $\sigma$ and some $\overline a \in (\theta / \sigma)^\calB$ and
  $\overline y = \tup(\tau, \overline b)$ for some $\tau$ and some $\overline b \in (\theta / \tau)^\calB$.
  
  Assume, first, that $\sigma \ne \tau$. Then $\overline x \mathrel{\sqsalidx^{\calB^*}} \overline y$ actually means that
  $\tp(\overline x) \triangleleft \tp(\overline y)$.
  Let $\min \hat f_\theta^{-1}(\overline x) = \overline u \in \theta^{\calA^*}$
  and $\min \hat f_\theta^{-1}(\overline y) = \overline v \in \theta^{\calA^*}$.
  Lemma~\ref{dthm.lem.tp-mat-facts-2} then yields that
  $\tp(\overline u) = \tp(\overline x) \triangleleft \tp(\overline y) = \tp(\overline v)$, so
  $\overline u \mathrel{\sqsalidx^{\calA^*}} \overline v$.
  
  Assume, now, that $\sigma = \tau$. Then $\tp(\overline x) = \tp(\overline y)$, so Lemma~\ref{dthm.lem.tp-mat-facts}
  implies that $\overline a = \mat(\overline x) \mathrel{\sqsalidx^\calB} \mat(\overline y) = \overline b$.
  Since $\hat f_{\theta/\sigma} : ((\theta / \sigma)^{\calA}, \Boxed{\sqsalidx^\calA}) \to ((\theta / \sigma)^{\calB}, \Boxed{\sqsalidx^\calB})$
  is a rigid surjection, it follows that $\min \hat f_{\theta/\sigma}^{-1}(\mat(\overline x)) \mathrel{\sqsalidx^\calA}
  \min \hat f_{\theta/\sigma}^{-1}(\mat(\overline y))$. Lemma~\ref{dthms.lem.AUX} yields that
  \begin{align*}
    \min \hat f_{\theta/\sigma}^{-1}(\mat(\overline x)) &= \mat(\min \hat f_\theta^{-1}(\overline x)) \text{ and}\\
    \min \hat f_{\theta/\sigma}^{-1}(\mat(\overline y)) &= \mat(\min \hat f_\theta^{-1}(\overline y)).
  \end{align*}
  Therefore,
  $\mat(\min \hat f_\theta^{-1}(\overline x)) \mathrel{\sqsalidx^\calA} \mat(\min \hat f_\theta^{-1}(\overline y))$.
  On the other hand, $\tp(\overline x) = \tp(\overline y)$ implies that
  $\tp(\min \hat f_\theta^{-1}(\overline x)) = \tp(\min \hat f_\theta^{-1}(\overline y))$.
  Lemma~\ref{dthm.lem.tp-mat-facts} then ensures that
  $\min \hat f_\theta^{-1}(\overline x) \mathrel{\sqsalidx^{\calA^*}} \min \hat f_\theta^{-1}(\overline y)$.
  This concludes the proof of Case~$3^\circ$, the proof that the functor $G$ is well defined on morphisms, and
  the proof of the theorem.
\end{proof}

\section{Tournaments -- a non-example}
\label{dthms.sec.no-drp-tournaments}

In this section we prove that the category whose objects are finite linearly ordered reflexive tournaments
and whose morphisms are rigid surjective homomorphisms does not have the dual Ramsey property.

A \emph{linearly ordered reflexive tournament} is a structure $(A, \Boxed\to, \Boxed\sqsubset)$ where
$\sqsubset$ is a linear order on $A$ and $\Boxed\to \subseteq A^2$ is a reflexive relation such that for all $x \ne y$,
either $x \to y$ or $y \to x$. A mapping $f : A \to A'$ is a \emph{rigid surjective homomorphism}
from $\calA = (A, \Boxed\to, \Boxed\sqsubset)$ to $\calA' = (A', \Boxed{\to'}, \Boxed{\sqsubset'})$
if $f : (A, \Boxed\to) \to (A', \Boxed{\to'})$ is a homomorphism and
$f : (A, \Boxed\sqsubset) \to (A', \Boxed{\sqsubset'})$ is a rigid surjection.

A reflexive tournament $(T, \Boxed\to)$ is an \emph{inflation} of a reflexive tournament $(S, \Boxed\to)$
if there exists a surjective homomorphism $(T, \Boxed\to) \to (S, \Boxed\to)$.
Finite reflexive tournaments $(S_1, \Boxed\to)$ and $(S_2, \Boxed\to)$ are \emph{siblings} if
there exists a finite reflexive tournament $(T, \Boxed\to)$ which is an inflation of $(S_1, \Boxed\to)$ and
an inflation of $(S_2, \Boxed\to)$. Let $C_3$ denote the reflexive tournament $(\{1, 2, 3\}, \Boxed\to)$ whose
nontrivial edges are $1 \to 2$, $2 \to 3$ and $3 \to 1$, and let $C_3^+$
denote the reflexive tournament $(\{1, 2, 3, 4\}, \Boxed\to)$ whose nontrivial edges are $1 \to 2$, $2 \to 3$, $3 \to 1$,
$1 \to 4$, $2 \to 4$ and $3 \to 4$.

\begin{LEM}\label{dthms.lem.tmts}
  $C_3$ and $C_3^+$ are not siblings.
\end{LEM}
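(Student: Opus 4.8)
The plan is to argue by contradiction: suppose $(T,\Boxed\to)$ is a finite reflexive tournament that is a common inflation of $C_3$ and $C_3^+$, witnessed by surjective homomorphisms $g : (T,\Boxed\to) \to C_3$ and $h : (T,\Boxed\to) \to C_3^+$. Writing $A_i = h^{-1}(i)$ for $i \in \{1,2,3,4\}$, surjectivity of $h$ guarantees that each $A_i$ is nonempty; the whole argument will hinge on the fact that the vertex $4$ of $C_3^+$ is a sink.

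The first step is to record what this sink forces on $T$. Since $4$ has no outgoing nontrivial edges in $C_3^+$, no vertex of $A_4$ can beat any vertex outside $A_4$: if $t \in A_4$ and $s \notin A_4$ satisfied $t \to s$ in $T$, then applying $h$ would yield $4 \to h(s)$ with $h(s) \in \{1,2,3\}$, which is impossible. As $T$ is a tournament, this means $s \to t$ for every $t \in A_4$ and every $s \notin A_4$; that is, $A_4$ is dominated by the rest of~$T$.

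The key step combines this domination with the fact that $C_3$ is a strict $3$-cycle, so that for distinct vertices the orientation is determined cyclically. I will prove the following propagation lemma: if $t \in A_4$ and $b$ is the $C_3$-successor of $g(t)$ (the unique vertex with $g(t) \to b$), then $g^{-1}(b) \subseteq A_4$. Indeed, for any $u$ with $g(u) = b$, were $u \notin A_4$ the domination property would give $u \to t$, hence $g(u) \to g(t)$, i.e.\ $b \to g(t)$ in $C_3$; but $g(t) \to b$ is the edge and $C_3$ has no reverse edge, a contradiction. Starting from any $t^* \in A_4$ and chasing the successor function around the cycle $1 \to 2 \to 3 \to 1$ (each preimage is nonempty by surjectivity of $g$, so a representative is always available), three applications of the lemma force $g^{-1}(1), g^{-1}(2), g^{-1}(3) \subseteq A_4$, whence $T = A_4$. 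This contradicts $A_1 \ne \0$ and completes the proof.

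The argument is essentially free of case analysis, so I do not expect a serious obstacle; the one point that deserves care is keeping the two structural features separate and making sure each is used where it applies — the sink of $C_3^+$ supplies the domination of $A_4$, while the strict cyclic orientation of $C_3$ supplies the propagation. In particular, the definition of reflexive tournament a priori permits both $x \to y$ and $y \to x$, but this never interferes, since the homomorphism $g$ already forbids the reverse edge $b \to g(t)$ irrespective of whether $T$ carries double edges.
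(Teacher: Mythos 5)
Your proof is correct, and it takes a genuinely different route from the paper's. The paper also argues by contradiction, but then works with the full $3\times 4$ grid of intersections $D_{ij}=f^{-1}(i)\cap g^{-1}(j)$ of the fibers of the two homomorphisms: it observes that $D_{ij}$ and $D_{uv}$ cannot both be nonempty when $i\to u$ in $C_3$ while $v\to j$ in $C_3^+$ (a ``critical pair''), encodes nonemptiness as a $3\times 4$ $01$-matrix subject to row- and column-covering conditions together with the critical-pair exclusions, and rules out every such matrix by a case analysis on the first column. Your argument instead isolates the two structural features that make the statement true: vertex $4$ of $C_3^+$ is a sink, so its fiber $A_4$ is dominated by the rest of $T$, and the strict cyclic orientation of $C_3$ then propagates membership in $A_4$ around the cycle $1\to 2\to 3\to 1$ until all three fibers of $g$, hence all of $T$, lie inside $A_4$, contradicting surjectivity onto $C_3^+$. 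Each step checks out: $u\ne t$ whenever $g(u)=b\ne g(t)$, so the domination property applies; the reverse edge $b\to g(t)$ is genuinely absent from $C_3$ as defined, so the homomorphism condition yields the contradiction regardless of any double edges in $T$; and the three applications of the propagation step do exhaust $T$. What each approach buys: yours is shorter, avoids case analysis, and explains \emph{why} the two tournaments are not siblings, whereas the paper's critical-pair matrix is a mechanical finite check that would transfer unchanged to testing other pairs of small tournaments for the sibling relation.
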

\begin{proof}
  Suppose, to the contrary, that there is a finite reflexive tournament $T$ and surjective homomorphisms
  $f : T \to C_3$ and $g : T \to C_3^+$.
  Let $A_i = f^{-1}(i)$, $1 \le i \le 3$, and $B_j = f^{-1}(j)$, $1 \le j \le 4$.
  Let $D_{ij} = A_i \sec B_j$, $1 \le i \le 3$, $1 \le j \le 4$.
  For each $j$, it is not possible that all of the sets $D_{1j}$, $D_{2j}$ and $D_{3j}$ are empty because
  $B_j = D_{1j} \union D_{2j} \union D_{3j}$ is nonempty. Analogously,
  for each $i$, it is not possible that all of the sets $D_{i1}$, $D_{i2}$, $D_{i3}$ and $D_{i4}$ are empty.
  
  Now, consider $D_{ij}$ and $D_{uv}$ for some
  $1 \le i, u \le 3$ and $1 \le j, v \le 4$, and note that if $i \to u$ in $C_3$ and $v \to j$ in
  $C_3^+$ then $D_{ij} = \0$ or $D_{uv} = \0$. (If this is not the case, take arbitrary $x \in D_{ij}$ and $y \in D_{uv}$.
  If $x \to y$ in $T$ then $g(x) \to g(y)$ in $C_3^+$. But $g(x) = j$ because $x \in D_{ij} \subseteq B_j$ and
  $g(y) = v$ because $y \in D_{uv} \subseteq B_v$. Hence, $j \to v$, which contradicts the assumption. The other possibility,
  $y \to x$ in $T$, leads analogously to the contradiction with $i \to u$ in $C_3$.)
  We shall say that $(ij, uv)$ is a critical pair if $i \to u$ in $C_3$ and $v \to j$ in
  $C_3^+$. It is easy to list all the critical pairs:
  \begin{gather*}
    (11,23),\;
    (11,32),\;
    (11,34),\;
    (12,21),\;
    (12,33),\;
    (12,34),\\
    (13,22),\,
    (13,31),\;
    (13,34),\;
    (14,21),\;
    (14,22),\;
    (14,23),\\
    (21,33),\;
    (22,31),\,
    (23,32),\;
    (24,31),\;
    (24,32),\;
    (24,33).
  \end{gather*}
  Let $M = [m_{ij}]_{3 \times 4}$ be a 01-matrix such that:
  $$
    m_{ij} = \begin{cases}
      0, & D_{ij} = \0,\\
      1, & D_{ij} \ne \0,
    \end{cases}
  $$
  where $1 \le i \le 3$ and $1 \le j \le 4$.
  Then, as we have just seen, $M$ has the following properties:
  \begin{enumerate}
  \item
    each row contains at least one occurrence of~1;
  \item
    each column contains at least one occurrence of~1; and
  \item
    for every critical pair $(ij, uv)$ we have that $m_{ij} = 0$ or $m_{uv} = 0$ (or both).
  \end{enumerate}
  Let us show that no 01-matrix $M = [m_{ij}]_{3 \times 4}$ satisfies all the three properties.
  There are only seven possibilities to fill the first column by 0's and 1's (the option 000 is excluded by~(ii)).
  Let us consider only the case 100, Fig.~\ref{dthms.fig.siblings}~$(a)$,
  as the other cases follow by analogous arguments. The entries 23, 32 and 34 have to be~0 because of~(iii)
  and the critical pairs $(11,23)$, $(11,32)$ and $(11,34)$, Fig.~\ref{dthms.fig.siblings}~$(b)$.
  Then the entry 33 has to be~1 because of (i), so~(iii) and the critical pairs $(12,33)$ and $(24,33)$ force
  the entries 12 and 24 to be~0, Fig.~\ref{dthms.fig.siblings}~$(c)$. Using (i) once more, the entry 22 has to be~1,
  and the critical pair $(14,22)$ forces the entry 14 to be~0, Fig.~\ref{dthms.fig.siblings}~$(d)$.
  Now, the last column of the matrix is 000, which contradicts~(ii).

\begin{figure}
  $$
    \begin{array}{cccccccccc}
      \begin{array}{|c|c|c|c|}
        \hline
          1 & \  & \  & \  \\ \hline
          0 & \  & \  & \  \\ \hline
          0 & \  & \  & \  \\ \hline
      \end{array}
    &
      \begin{array}{|c|c|c|c|}
        \hline
          1 & \  & \  & \  \\ \hline
          0 & \  & 0  & \  \\ \hline
          0 & 0  & \  & 0  \\ \hline
      \end{array}
    &
      \begin{array}{|c|c|c|c|}
        \hline
          1 & 0  & \  & \  \\ \hline
          0 & \  & 0  & 0  \\ \hline
          0 & 0  & 1  & 0  \\ \hline
      \end{array}
    &
      \begin{array}{|c|c|c|c|}
        \hline
          1 & 0  & \  & 0  \\ \hline
          0 & 1  & 0  & 0  \\ \hline
          0 & 0  & 1  & 0  \\ \hline
      \end{array}
    \\
      (a) & (b) & (c) & (d)
    \end{array}
  $$
  \caption{$C_3$ and $C_3^+$ are not siblings}
  \label{dthms.fig.siblings}
\end{figure}
  
  Therefore, the assumption that there is a finite reflexive tournament $T$ and surjective homomorphisms
  $f : T \to C_3$ and $g : T \to C_3^+$ leads to a contradiction.
\end{proof}

\begin{THM}\label{dthms.thm.no-drp-tournaments}
  Let $\TT$ be a category whose objects are finite linearly ordered reflexive tournaments and whose morphisms are
  rigid surjective homomorphisms. Then $\TT$ does not have the dual Ramsey property.
\end{THM}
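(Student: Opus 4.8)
The plan is to spell the dual Ramsey property out as the Ramsey property of $\TT^{\op}$ and then to exhibit a single pair of template objects for which the Ramsey arrow fails for \emph{every} choice of the third object. Concretely, $\TT$ has the dual Ramsey property if and only if for every $k \ge 2$ and all ordered reflexive tournaments $\calA,\calB$ admitting a rigid surjective homomorphism $\calB \to \calA$, there is a $\calC$ such that for every $k$-colouring of $\hom_\TT(\calC,\calA)$ some rigid surjective homomorphism $w \colon \calC \to \calB$ makes the whole set $\{\,g \circ w : g \in \hom_\TT(\calB,\calA)\,\}$ monochromatic. (In $\TT^{\op}$ the roles are reversed, $\hom_{\TT^{\op}}(\calA,\calB)=\hom_\TT(\calB,\calA)$, and post-composition becomes pre-composition in $\TT$.) Since every object is rigid, Corollary~\ref{dthms.cor.rigid} supplies no obstruction, so any failure must come from the fine structure of the morphisms. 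I would therefore aim, with $k=2$, to produce one legitimate pair $\calA,\calB$ and, for each $\calC$, a fixed bad colouring.

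First I would fix the templates. Take $\calA$ to be the two-vertex dominating pair $\{p \to q\}$ (with loops, ordered $p \sqsubset q$), and take $\calB$ to be the transitive sum $C_3 \Rightarrow C_3 \Rightarrow C_3$ of three reflexive $3$-cycles: three strongly connected blocks $T,M,B$, each a copy of $C_3$, where every vertex of an earlier block dominates every vertex of a later one, ordered block by block. A homomorphism onto $\calA$ is exactly a dominating split, and since each block is strongly connected such a split cannot cut through a block; hence $\hom_\TT(\calB,\calA)$ consists of \emph{precisely} two rigid surjections — the cut $g_0$ with bottom $M \cup B$ and the cut $g_1$ with bottom $B$ (both contain the global minimum in their top part, so both are rigid). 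This legitimises the instance, $\hom_\TT(\calB,\calA) \ne \varnothing$, and gives two composites to play off against each other.

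Next I would colour $\hom_\TT(\calC,\calA)$ by setting $\chi(f)=1$ when the induced sub-tournament $\calC[f^{-1}(q)]$ is strongly connected and $\chi(f)=2$ otherwise; this is a total $2$-colouring. The decisive step is to check that for \emph{every} $\calC$ and \emph{every} $w \colon \calC \to \calB$ one has $\chi(g_0 \circ w) \ne \chi(g_1 \circ w)$. Here one uses that any homomorphism onto $\calB$ must carry the block structure: writing $\tilde T,\tilde M,\tilde B$ for the preimages of the three blocks, each is a non-empty inflation of $C_3$ — hence strongly connected — and $\tilde T \Rightarrow \tilde M \Rightarrow \tilde B$ holds with full domination. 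Thus $(g_1 \circ w)^{-1}(q) = \tilde B$ is strongly connected, whereas $(g_0 \circ w)^{-1}(q) = \tilde M \cup \tilde B$ never is, since no edge runs back from $\tilde B$ to $\tilde M$. So the two composites always receive opposite colours, no $w$ yields a monochromatic set, and the arrow fails for this $\calC$; the remaining case $\hom_\TT(\calC,\calB)=\varnothing$ is immediate, as then there is no candidate $w$ at all.

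The main obstacle is exactly the verification at the heart of the previous paragraph: that every rigid surjective homomorphism onto $\calB$ decomposes $\calC$ into three dominating $C_3$-inflation blocks, so that the two cuts are forced into distinct colours uniformly over all $\calC$. This rests on the fact that a cyclic block cannot be ``undone'' by a further quotient, which is the same structural rigidity isolated in Lemma~\ref{dthms.lem.tmts} (that $C_3$ and $C_3^+$ are not siblings); in the $C_3^+$-based formulation of the paper it is that lemma which supplies the obstruction preventing a monochromatic witness. Once this incompatibility is secured, the failure of the Ramsey arrow in $\TT^{\op}$, and therefore of the dual Ramsey property for $\TT$, follows immediately.
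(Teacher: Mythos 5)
Your proof is correct, but it takes a genuinely different route from the paper's. The paper uses a seven-vertex $\calB$ containing a copy of $C_3$ on $\{1,2,3\}$ and a copy of $C_3^+$ on $\{1,2,3,4\}$, colours $f\in\hom_\TT(\calT,\calA)$ according to whether $f^{-1}(1)$ induces an inflation of $C_3$, and then the whole weight of the argument falls on Lemma~\ref{dthms.lem.tmts} (that $C_3$ and $C_3^+$ have no common inflation), which is proved by a separate case analysis on $3\times 4$ zero-one matrices. You instead take $\calB$ to be the ordinal sum of three copies of $C_3$ and colour by strong connectivity of $f^{-1}(q)$; the discriminating fact is then completely elementary: the $w$-preimages $\tilde T,\tilde M,\tilde B$ of the three blocks are nonempty inflations of $C_3$ (hence strongly connected) with full domination between them, so $(g_1\circ w)^{-1}(q)=\tilde B$ is strongly connected while $(g_0\circ w)^{-1}(q)=\tilde M\cup\tilde B$ is not, and the two composites always get opposite colours. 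This eliminates the sibling lemma and its matrix computation altogether, at the cost of a slightly larger $\calB$ (nine vertices instead of seven); the paper's version, in exchange, isolates a structural fact of independent interest. One small correction: your closing paragraph attributes the key verification to ``the same structural rigidity isolated in Lemma~\ref{dthms.lem.tmts}'', but your argument nowhere needs that lemma --- that a surjective tournament homomorphism onto $C_3$ forces full cyclic domination between the three fibres (and hence strong connectivity of the preimage) is a two-line check, and the failure of strong connectivity for $\tilde M\cup\tilde B$ is immediate from the absence of edges from $\tilde B$ back to $\tilde M$. The peripheral checks are also right: $\hom_\TT(\calB,\calA)=\{g_0,g_1\}$ because a dominating split cannot cut a strongly connected block, both cuts are rigid because the global minimum lies in the top part, and the case $\hom_\TT(\calC,\calB)=\0$ is vacuous.
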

\begin{proof}
  Let $\calA = (A, \Boxed\to, \Boxed<)$ and $\calB = (B, \Boxed\to, \Boxed<)$
  be linearly ordered tournaments depicted in Fig.~\ref{dthms.fig.tmts}, where
  $A = \{1, 2\}$, $B = \{1, 2, 3, 4, 5, 6, 7\}$ and~$<$ is the ordering of the integers.
  Let us show that no finite linearly ordered reflexive tournament $\calT$ satisfies $\calT \longrightarrow (\calB)^\calA_2$
  in $\TT^\op$. Take any finite linearly ordered reflexive tournament 
  $\calT = (T, \Boxed\to, \Boxed\sqsubset)$ and define the coloring
  $\chi : \hom_\TT(\calT, \calA) \to \{1, 2\}$ as follows:
  $$
    \chi(f) = \begin{cases}
      1, & \begin{array}[t]{@{}l}
             \text{the subtournament of } (T, \Boxed\to) \text{ induced by } f^{-1}(1) \text{ is}\\
             \text{an inflation of } C_3,
           \end{array}\\
      2, & \text{otherwise}.
    \end{cases}
  $$
  Let $\phi, \psi : \{1, 2, 3, 4, 5, 6, 7\} \to \{1, 2\}$ be the following maps:
  $$
    \phi = \begin{pmatrix}
      1 & 2 & 3 & 4 & 5 & 6 & 7 \\
      1 & 1 & 1 & 2 & 2 & 2 & 2
    \end{pmatrix}
    \text{ and }
    \psi = \begin{pmatrix}
      1 & 2 & 3 & 4 & 5 & 6 & 7 \\
      1 & 1 & 1 & 1 & 2 & 2 & 2
    \end{pmatrix}.
  $$
  Clearly, $\phi, \psi \in \hom_\TT(\calB, \calA)$. Now, take any $w \in \hom_\TT(\calT, \calB)$.
  Since
  $$
    (\phi \circ w)^{-1}(1) = w^{-1}(\phi^{-1}(1)) = w^{-1}(\{1, 2, 3\}),
  $$
  it follows that
  $(\phi \circ w)^{-1}$ induces an inflation of $C_3$ in $(T, \Boxed\to)$, so $\chi(\phi \circ w) = 1$.
  Let us show that $\chi(\psi \circ w) = 2$. Suppose this is not the case. Then
  $\chi(\psi \circ w) = 1$, whence follows that
  $
    (\psi \circ w)^{-1} = w^{-1}(\{1, 2, 3, 4\})
  $
  induces a subtournament $(S, \Boxed\to)$ of $(T, \Boxed\to)$ which is an inflation of $C_3$.
  But the subtournament of $(B, \Boxed\to)$ induced by $\{1, 2, 3, 4\}$ is $C_3^+$, whence follows that
  $(S, \Boxed\to)$ is at the same time an inflation of $C_3$ -- contradiction with Lemma~\ref{dthms.lem.tmts}.
\end{proof}

\begin{figure}
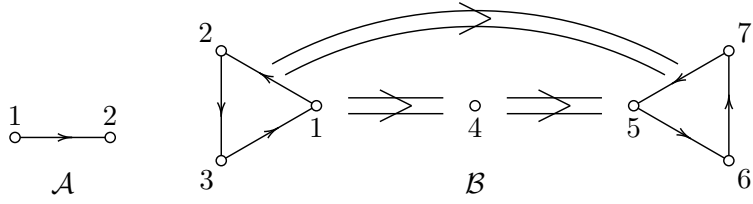

  \centering
\begin{pgfpicture}
  \pgfsetxvec{\pgfpoint{\acadpgfunit}{0pt}}
  \pgfsetyvec{\pgfpoint{0pt}{\acadpgfunit}}
  \pgfsetlinewidth{\acadpgflinewidth}
  \pgftransformshift{\pgfpointxy{25.0}{-50.0}}

  \begin{pgfscope}
    \pgfpathmoveto{\pgfpointxy{500.0}{350.0}}
    \pgfpathlineto{\pgfpointxy{350.0}{436.603}}
    \pgfusepath{stroke}
  \end{pgfscope}
  \begin{pgfscope}
    \pgfpathmoveto{\pgfpointxy{350.0}{436.603}}
    \pgfpathlineto{\pgfpointxy{350.0}{263.397}}
    \pgfusepath{stroke}
  \end{pgfscope}
  \begin{pgfscope}
    \pgfpathmoveto{\pgfpointxy{350.0}{263.397}}
    \pgfpathlineto{\pgfpointxy{500.0}{350.0}}
    \pgfusepath{stroke}
  \end{pgfscope}
  \begin{pgfscope}
    \pgfpathmoveto{\pgfpointxy{1000.0}{350.0}}
    \pgfpathlineto{\pgfpointxy{1150.0}{263.397}}
    \pgfusepath{stroke}
  \end{pgfscope}
  \begin{pgfscope}
    \pgfpathmoveto{\pgfpointxy{1150.0}{263.397}}
    \pgfpathlineto{\pgfpointxy{1150.0}{436.603}}
    \pgfusepath{stroke}
  \end{pgfscope}
  \begin{pgfscope}
    \pgfpathmoveto{\pgfpointxy{1150.0}{436.603}}
    \pgfpathlineto{\pgfpointxy{1000.0}{350.0}}
    \pgfusepath{stroke}
  \end{pgfscope}
  \begin{pgfscope}
    \pgfpathmoveto{\pgfpointxy{550.0}{362.5}}
    \pgfpathlineto{\pgfpointxy{700.0}{362.5}}
    \pgfusepath{stroke}
  \end{pgfscope}
  \begin{pgfscope}
    \pgfpathmoveto{\pgfpointxy{800.0}{362.5}}
    \pgfpathlineto{\pgfpointxy{950.0}{362.5}}
    \pgfusepath{stroke}
  \end{pgfscope}
  \begin{pgfscope}
    \pgfpathmoveto{\pgfpointxy{550.0}{337.5}}
    \pgfpathlineto{\pgfpointxy{700.0}{337.5}}
    \pgfusepath{stroke}
  \end{pgfscope}
  \begin{pgfscope}
    \pgfpathmoveto{\pgfpointxy{800.0}{337.5}}
    \pgfpathlineto{\pgfpointxy{950.0}{337.5}}
    \pgfusepath{stroke}
  \end{pgfscope}
  \begin{pgfscope}
    \pgfpathmoveto{\pgfpointxy{650.0}{350.0}}
    \pgfpathlineto{\pgfpointxy{600.0}{375.0}}
    \pgfusepath{stroke}
  \end{pgfscope}
  \begin{pgfscope}
    \pgfpathmoveto{\pgfpointxy{650.0}{350.0}}
    \pgfpathlineto{\pgfpointxy{600.0}{325.0}}
    \pgfusepath{stroke}
  \end{pgfscope}
  \begin{pgfscope}
    \pgfpathmoveto{\pgfpointxy{900.0}{350.0}}
    \pgfpathlineto{\pgfpointxy{850.0}{375.0}}
    \pgfusepath{stroke}
  \end{pgfscope}
  \begin{pgfscope}
    \pgfpathmoveto{\pgfpointxy{900.0}{350.0}}
    \pgfpathlineto{\pgfpointxy{850.0}{325.0}}
    \pgfusepath{stroke}
  \end{pgfscope}
  \begin{pgfscope}
    \pgfpathmoveto{\pgfpointxy{750.0}{475.0}}
    \pgfpatharcaxes{90.0}{118.61}{\pgfpointxy{625.0}{0.0}}{\pgfpointxy{0.0}{625.0}}
    \pgfusepath{stroke}
  \end{pgfscope}
  \begin{pgfscope}
    \pgfpathmoveto{\pgfpointxy{750.0}{500.0}}
    \pgfpatharcaxes{90.0}{119.476}{\pgfpointxy{650.0}{0.0}}{\pgfpointxy{0.0}{650.0}}
    \pgfusepath{stroke}
  \end{pgfscope}
  \begin{pgfscope}
    \pgfpathmoveto{\pgfpointxy{1049.28}{398.685}}
    \pgfpatharcaxes{61.3895}{90.0}{\pgfpointxy{625.0}{0.0}}{\pgfpointxy{0.0}{625.0}}
    \pgfusepath{stroke}
  \end{pgfscope}
  \begin{pgfscope}
    \pgfpathmoveto{\pgfpointxy{1069.84}{415.866}}
    \pgfpatharcaxes{60.5241}{90.0}{\pgfpointxy{650.0}{0.0}}{\pgfpointxy{0.0}{650.0}}
    \pgfusepath{stroke}
  \end{pgfscope}
  \begin{pgfscope}
    \pgfpathmoveto{\pgfpointxy{775.0}{487.5}}
    \pgfpathlineto{\pgfpointxy{725.0}{512.5}}
    \pgfusepath{stroke}
  \end{pgfscope}
  \begin{pgfscope}
    \pgfpathmoveto{\pgfpointxy{775.0}{487.5}}
    \pgfpathlineto{\pgfpointxy{725.0}{462.5}}
    \pgfusepath{stroke}
  \end{pgfscope}
  \begin{pgfscope}
    \pgfpathmoveto{\pgfpointxy{417.5}{309.33}}
    \pgfpatharcaxes{270.0}{300.0}{\pgfpointxy{45.0}{0.0}}{\pgfpointxy{0.0}{45.0}}
    \pgfusepath{stroke}
  \end{pgfscope}
  \begin{pgfscope}
    \pgfpathmoveto{\pgfpointxy{440.0}{315.359}}
    \pgfpatharcaxes{120.0}{150.0}{\pgfpointxy{45.0}{0.0}}{\pgfpointxy{0.0}{45.0}}
    \pgfusepath{stroke}
  \end{pgfscope}
  \begin{pgfscope}
    \pgfpathmoveto{\pgfpointxy{356.029}{355.179}}
    \pgfpatharcaxes{150.0}{180.0}{\pgfpointxy{45.0}{0.0}}{\pgfpointxy{0.0}{45.0}}
    \pgfusepath{stroke}
  \end{pgfscope}
  \begin{pgfscope}
    \pgfpathmoveto{\pgfpointxy{350.0}{332.679}}
    \pgfpatharcaxes{0.0}{30.0}{\pgfpointxy{45.0}{0.0}}{\pgfpointxy{0.0}{45.0}}
    \pgfusepath{stroke}
  \end{pgfscope}
  \begin{pgfscope}
    \pgfpathmoveto{\pgfpointxy{426.471}{385.49}}
    \pgfpatharcaxes{30.0}{60.0}{\pgfpointxy{45.0}{0.0}}{\pgfpointxy{0.0}{45.0}}
    \pgfusepath{stroke}
  \end{pgfscope}
  \begin{pgfscope}
    \pgfpathmoveto{\pgfpointxy{410.0}{401.962}}
    \pgfpatharcaxes{240.0}{270.0}{\pgfpointxy{45.0}{0.0}}{\pgfpointxy{0.0}{45.0}}
    \pgfusepath{stroke}
  \end{pgfscope}
  \begin{pgfscope}
    \pgfpathmoveto{\pgfpointxy{1073.53}{314.51}}
    \pgfpatharcaxes{210.0}{240.0}{\pgfpointxy{45.0}{0.0}}{\pgfpointxy{0.0}{45.0}}
    \pgfusepath{stroke}
  \end{pgfscope}
  \begin{pgfscope}
    \pgfpathmoveto{\pgfpointxy{1090.0}{298.038}}
    \pgfpatharcaxes{60.0}{90.0}{\pgfpointxy{45.0}{0.0}}{\pgfpointxy{0.0}{45.0}}
    \pgfusepath{stroke}
  \end{pgfscope}
  \begin{pgfscope}
    \pgfpathmoveto{\pgfpointxy{1143.97}{344.821}}
    \pgfpatharcaxes{-30.0}{0.0}{\pgfpointxy{45.0}{0.0}}{\pgfpointxy{0.0}{45.0}}
    \pgfusepath{stroke}
  \end{pgfscope}
  \begin{pgfscope}
    \pgfpathmoveto{\pgfpointxy{1150.0}{367.321}}
    \pgfpatharcaxes{180.0}{210.0}{\pgfpointxy{45.0}{0.0}}{\pgfpointxy{0.0}{45.0}}
    \pgfusepath{stroke}
  \end{pgfscope}
  \begin{pgfscope}
    \pgfpathmoveto{\pgfpointxy{1082.5}{390.67}}
    \pgfpatharcaxes{90.0}{120.0}{\pgfpointxy{45.0}{0.0}}{\pgfpointxy{0.0}{45.0}}
    \pgfusepath{stroke}
  \end{pgfscope}
  \begin{pgfscope}
    \pgfpathmoveto{\pgfpointxy{1060.0}{384.641}}
    \pgfpatharcaxes{300.0}{330.0}{\pgfpointxy{45.0}{0.0}}{\pgfpointxy{0.0}{45.0}}
    \pgfusepath{stroke}
  \end{pgfscope}
  \begin{pgfscope}
    \pgfpathmoveto{\pgfpointxy{175.0}{300.0}}
    \pgfpathlineto{\pgfpointxy{25.0}{300.0}}
    \pgfusepath{stroke}
  \end{pgfscope}
  \begin{pgfscope}
    \pgfpathmoveto{\pgfpointxy{92.5}{306.029}}
    \pgfpatharcaxes{240.0}{270.0}{\pgfpointxy{45.0}{0.0}}{\pgfpointxy{0.0}{45.0}}
    \pgfusepath{stroke}
  \end{pgfscope}
  \begin{pgfscope}
    \pgfpathmoveto{\pgfpointxy{115.0}{300.0}}
    \pgfpatharcaxes{90.0}{120.0}{\pgfpointxy{45.0}{0.0}}{\pgfpointxy{0.0}{45.0}}
    \pgfusepath{stroke}
  \end{pgfscope}
  \begin{pgfscope}
    \pgfsetfillcolor{white}
    \pgfpathellipse{\pgfpointxy{750.0}{350.0}}{\pgfpointxy{8.0}{0.0}}{\pgfpointxy{0.0}{8.0}}
    \pgfusepath{fill,stroke}
  \end{pgfscope}
  \begin{pgfscope}
    \pgfsetfillcolor{white}
    \pgfpathellipse{\pgfpointxy{500.0}{350.0}}{\pgfpointxy{8.0}{0.0}}{\pgfpointxy{0.0}{8.0}}
    \pgfusepath{fill,stroke}
  \end{pgfscope}
  \begin{pgfscope}
    \pgfsetfillcolor{white}
    \pgfpathellipse{\pgfpointxy{1000.0}{350.0}}{\pgfpointxy{8.0}{0.0}}{\pgfpointxy{0.0}{8.0}}
    \pgfusepath{fill,stroke}
  \end{pgfscope}
  \begin{pgfscope}
    \pgfsetfillcolor{white}
    \pgfpathellipse{\pgfpointxy{350.0}{436.603}}{\pgfpointxy{8.0}{0.0}}{\pgfpointxy{0.0}{8.0}}
    \pgfusepath{fill,stroke}
  \end{pgfscope}
  \begin{pgfscope}
    \pgfsetfillcolor{white}
    \pgfpathellipse{\pgfpointxy{350.0}{263.397}}{\pgfpointxy{8.0}{0.0}}{\pgfpointxy{0.0}{8.0}}
    \pgfusepath{fill,stroke}
  \end{pgfscope}
  \begin{pgfscope}
    \pgfsetfillcolor{white}
    \pgfpathellipse{\pgfpointxy{1150.0}{263.397}}{\pgfpointxy{8.0}{0.0}}{\pgfpointxy{0.0}{8.0}}
    \pgfusepath{fill,stroke}
  \end{pgfscope}
  \begin{pgfscope}
    \pgfsetfillcolor{white}
    \pgfpathellipse{\pgfpointxy{1150.0}{436.603}}{\pgfpointxy{8.0}{0.0}}{\pgfpointxy{0.0}{8.0}}
    \pgfusepath{fill,stroke}
  \end{pgfscope}
  \begin{pgfscope}
    \pgfsetfillcolor{white}
    \pgfpathellipse{\pgfpointxy{25.0}{300.0}}{\pgfpointxy{8.0}{0.0}}{\pgfpointxy{0.0}{8.0}}
    \pgfusepath{fill,stroke}
  \end{pgfscope}
  \begin{pgfscope}
    \pgfsetfillcolor{white}
    \pgfpathellipse{\pgfpointxy{175.0}{300.0}}{\pgfpointxy{8.0}{0.0}}{\pgfpointxy{0.0}{8.0}}
    \pgfusepath{fill,stroke}
  \end{pgfscope}
  \pgftext[top,at={\pgfpointxy{500.0}{330.0}}]{1}
  \pgftext[bottom,right,at={\pgfpointxy{336.687}{450.584}}]{2}
  \pgftext[top,right,at={\pgfpointxy{338.352}{248.278}}]{3}
  \pgftext[top,at={\pgfpointxy{750.0}{330.0}}]{4}
  \pgftext[top,at={\pgfpointxy{1000.0}{330.0}}]{5}
  \pgftext[top,left,at={\pgfpointxy{1162.9}{249.071}}]{6}
  \pgftext[bottom,left,at={\pgfpointxy{1162.96}{450.879}}]{7}
  \pgftext[bottom,at={\pgfpointxy{750.0}{212.0}}]{$\calB$}
  \pgftext[bottom,at={\pgfpointxy{25.0}{320.0}}]{1}
  \pgftext[bottom,at={\pgfpointxy{175.0}{320.0}}]{2}
  \pgftext[bottom,at={\pgfpointxy{100.0}{212.0}}]{$\calA$}
\end{pgfpicture}
  \caption{The tournaments in the proof of Theorem~\ref{dthms.thm.no-drp-tournaments}}
  \label{dthms.fig.tmts}
\end{figure}

\section{Acknowledgements}

The author gratefully acknowledges the support of the Grant No.\ 174019 of the Ministry of Education,
Science and Technological Development of the Republic of Serbia.

\end{document}